\tikzset{three sided/.style={
        draw=none,
        append after command={
            ([shift={( -0.3cm,0.3cm)}]\tikzlastnode.north west)
        edge([shift={( -0.3cm,-0.3cm)}]\tikzlastnode.south west)
            ([shift={( -0.3cm,-0.3cm)}]\tikzlastnode.south west)
        edge([shift={(0.3cm,-0.3cm)}]\tikzlastnode.south east)
            ([shift={(0.3cm,-0.3cm)}]\tikzlastnode.south east)
        edge([shift={( 0.3cm,0.3cm)}]\tikzlastnode.north east) 
        }
    }
}
\theoremstyle{plain}
\newtheorem{thm}{Theorem}
\newtheorem{cor}{Corollary}
\newtheorem{lem}{Lemma}
\newtheorem{prop}{Proposition}
\newtheorem{assumption}{Assumption}
\theoremstyle{definition}
\newtheorem{defn}{Definition}
\newtheorem*{remark}{Remark}
\newcommand{\ep}{\epsilon}
\newcommand*{\rttensortwo}[1]{\bar{\bar{#1}}}
\definecolor{normalgreen}{rgb}{0.1,.65,.1}
\definecolor{mygreen}{RGB}{10, 140, 10}
\providecommand{\keywords}[1]{\hspace*{-4mm}\small\textbf{\textit{Keywords}---}#1}
\title{{\textbf{A Queueing Model of Dynamic Pricing and Dispatch Control for Ride-Hailing Systems Incorporating Travel Times}}}
\author{Amir Anastasios Alwan, Baris Ata, and Yuwei Zhou}
\affil{{\small {\textit{The University of Chicago Booth School of Business}}}}
\date{\vspace{-3mm}{{\small{\today}}}}
\titleformat*{\section}{\large\bfseries}
\titleformat*{\subsection}{\normalsize\bfseries}
\newcommand{\zerodisplayskips}{%
	\setlength{\abovedisplayskip}{5.7pt}%
	\setlength{\belowdisplayskip}{5.7pt}%
	\setlength{\abovedisplayshortskip}{5.7pt}%
	\setlength{\belowdisplayshortskip}{5.7pt}}
\appto{\normalsize}{\zerodisplayskips}
\appto{\small}{\zerodisplayskips}
\appto{\footnotesize}{\zerodisplayskips}
\begin{document}
	\vspace*{-17mm}
	{\let\newpage\relax\maketitle}
	\vspace{-2em}
	\begin{abstract}
		A system manager makes dynamic pricing and dispatch control decisions in a queueing network model motivated by ride hailing applications. A novel feature of the model is that it incorporates travel times. Unfortunately, this renders the exact analysis of the problem intractable. Therefore, we study this problem in the heavy traffic regime. Under the assumptions of complete resource pooling and common travel distribution, we solve the problem in closed form by analyzing the corresponding Bellman equation. Using this solution, we propose a policy for the queueing system and illustrate its effectiveness in a simulation study.
	\end{abstract}
	
	\keywords{ride-hailing, dynamic pricing, matching, diffusion approximations, heavy traffic analysis, stochastic control}
	
	\doublespacing
	
	\section{Introduction}\label{sec:1}
	
	This paper studies a dynamic control problem for a queueing model motivated by taxi and ride hailing systems. In those systems, customers and drivers can be matched centrally by a platform using web or mobile applications. In addition, the platform can adjust the prices dynamically over time. We consider a city partitioned into a set of geographical regions. Each such region should be thought of as a pick-up or drop-off location. Simultaneously, cars reside in these regions waiting to pick up customers. We use a queueing model to study this problem, following a growing number of papers in the operations research literature. However, much of the relevant literature assumes away the travel times between the pick-up and drop-off locations, see for example \citet{AtaBarjestehKumar_SPN} and the references therein. A key novelty of our model is that it incorporates travel times, but this leads to a significantly more challenging analysis.
	
	We assume that the platform, also referred to as the system manager hereafter, has two levers: pricing and dispatch controls. She seeks an effective policy that makes both dynamic pricing and dynamic dispatch control decisions in order to maximize the long-run average profit. We allow the prices to depend on time and the customer location. Dynamically adjusting prices elicits two competing effects. On the one hand, increasing prices increase the per-ride revenue for the platform. On the other hand, customers are price sensitive, so higher prices result in lower customer demand. Dispatching refers to the process of matching a car with a customer requesting a ride and constitutes an important operational decision for the platform. 

	We model a ride-hailing or taxi system as a closed queueing network with a fixed number of jobs, denoted by $n$. There are $I$ buffers, $I$ single-server nodes, and an infinite-server node in the SPN. The terms ``server'' and ``resource'' will be used interchangeably to refer to a single-server node. Similarly, the terms ``buffer'' and ``class'' will be used interchangeably. As such, jobs in buffer $i$ will be referred to as class $i$ jobs, for $i=1,\dots, I$. In addition to choosing prices dynamically, the system manager can engage in $J$ possible (dispatch) activities, where each activity corresponds to a server serving jobs in a buffer. Following service at a single-server node, jobs are routed to the infinite-server node. Jobs then continue their service at the infinite-server node, after which they are probabilistically routed back to the buffers. The infinite-server node models the travel times. This process continues indefinitely.
	
	In the context of our motivating application, jobs correspond to cars that circulate in the system perpetually. The $I$ buffers correspond to $I$ city regions where cars wait to get matched with a customer. In addition, the service rates at a single-server node can be thought of as the customer arrival rate to the corresponding region, which depends on the price. As a result, customer demand dynamically changes over time as the platform varies the prices of rides. An activity corresponds to dispatching a car from one region to serving an arriving customer possibly in another region. Thus, a service completion at a single-server node corresponds to a car getting matched with a customer. After getting matched with a customer, the car must travel to pick up the customer and bring him to his destination. We assume that all customer requests that are not met immediately are lost. In the queueing model, this corresponds to jobs getting routed to and served at the infinite-server node. That is, the infinite-server node models the travel time of a car from its initial dispatch time to the drop off time of the customer. Upon completing service at the infinite server node, the job is routed to the buffer that is associated with the customer's destination. This is modeled through a probabilistic routing structure as is usually done in the queueing literature. Although the SPN we study is motivated by the ride-hailing and taxi systems, in what follows we use the queueing terminology that is standard in the literature. However, we will occasionally make reference to our motivating applications when intuition or interpretation are needed.

    As mentioned above, incorporating travel times makes the problem significantly more challenging. To ease the analysis, we assume there is a single travel node. This assumption has two implications: First, the travel times between any two regions have the same distribution. Second, upon completing service at the infinite-server node all job classes share the same probabilistic routing structure. Admittedly, this is a restrictive assumption, but it simplifies the analysis and allows us to incorporate the travel times into the model. We view our model as an important first step in the analysis of ride-sharing network models that incorporate travel times. 

    However, even under the single travel node assumption, the problem is not amenable to exact analysis. As such, we consider a diffusion approximation to it in the heavy traffic asymptotic regime. In that regime, under the so called complete resource pooling condition, see \citet{HarrisonLopez1999}, we solve the problem analytically and derive a closed-form solution for the optimal dynamic prices.

    Notwithstanding these restrictive assumptions, the paper makes two contributions. First, it incorporates the travel times in the model and solves the resulting dynamic pricing and dispatch control problem analytically in the heavy traffic regime. Second, it makes a methodological contribution by solving a drift-rate control problem on an unbounded domain, which could be of interest in its own right.
	
	The rest of the paper is structured as follows. Section \ref{sec:2} reviews the literature. Section \ref{sec:3} presents the control problem for the ride-hailing platform, and the associated Brownian control problem is derived formally in Section \ref{sec:4}. The equivalent workload formulation is formulated in Section \ref{sec:5} and it is solved in Sections \ref{sec:6} and \ref{sec:7} by studying a related Bellman equation. Section \ref{sec:8} interprets the solution of the equivalent workload formulation in the context of the original control problem and proposes a pricing and dispatch policy. Section \ref{sec:SimulationStudy} conducts a simulation study to illustrate the effectiveness of the proposed policy. Section \ref{sec:10} concludes the paper. There are two appendices: Appendix \ref{app:A} provides a formal derivation of the Brownian control problem, and additional proofs are given in Appendix \ref{app:B}.
	
	\section{Literature Review}\label{sec:2}
	
	Our paper is related to two streams of literature: the modeling and analysis of ride-hailing and taxi systems and the dynamic control of queueing networks. 
 
	In recent years several authors have modeled ride-hailing and taxi systems using queueing networks. A majority of this literature has focused on how pricing, dispatch (matching), and relocation decisions can improve system performance. From a modeling perspective, \citet{AtaBarjestehKumar_SPN} and \citet{Braverman} are most closely related to ours. \citet{AtaBarjestehKumar_SPN} model a ride-hailing system closed stochastic processing network with dispatch and relocation control. Under heavy traffic conditions, they approximate the original control problem by a Brownian control problem (BCP). After reducing the BCP to an equivalent workload formulation, they propose an algorithm to solve it numerically. However, their model does not include travel times, whereas ours does. Incorporating travel times leads to a significantly more challenging problem in the heavy traffic limit under the diffusion scaling. On the other hand, \citet{Braverman} model a ride-hailing system as a closed queueing networks with travel times and relocation control. By solving a suitable linear program, they propose a static routing policy and prove that it is asymptotically optimal in a large market asymptotic regime under fluid scaling. \citet{Hosseini2021} extends the analysis of \citet{Braverman} by designing a dynamic relocation that outperforms the asymptotically optimal static policy in realistic problem instances. In a related study, \citet{ZhangPavone2016} uses a combination of single-server and infinite-server queueing model to study the control of autonomous vehicles. The authors derive an open loop policy by solving a linear program. Building on this solution, they also propose an effective dynamic rebalancing policy. 
	
	Several other papers are at the intersection of ride-hailing and queueing, but differ more in their modeling choices and analysis. \citet{Banerjee15} study pricing on a single-region model with a single travel time node and show that an optimal static pricing policy performs well. \citet{Freund} develop an approximation framework to study vehicle sharing systems under pricing, matching, and repositioning policies for several objective functions and under various system constraints. In particular, they develop algorithms and show that the approximation ratio of the resulting policy improves as the number of cars in each region grows. \citet{BanerjeeKanoriaQian2020} study matching for a general closed queueing network that can be used to model ride-hailing systems. They propose a family of state-dependent matching policies that do not use any demand arrival rate information. Under a complete resource pooling assumption, they show that the proportion of dropped demand under any such policy decays exponentially as the number of supply units in the network grows. \citet{Afeche18} develop a game-theoretic fluid model to study admission control and repositioning in a ride-hailing system with strategic drivers. Their analysis provides insights into spatial demand imbalances and how demand admission control can impact the strategic behavior of drivers in the network. \citet{Afeche18} studies the optimal dynamic pricing and dispatch control under demand shocks. \citet{OzkanWard2020} model a ride-hailing system as an open queueing network model with impatient customers. They propose a matching policy and prove asymptotically optimality in the fluid scale in a large market regime. \citet{Ozkan2020} studies a fluid model with strategic drivers that incorporates both pricing and matching decisions, highlighting the importance of looking at multiple controls simultaneously. \citet{Besbes2021b} study the effect of pick up and travel times on capacity planning for a ride-hailing system by modeling it as a spatial multi-server queue. \citet{Chen2020} proposes static and dynamic policies that are asymptotically optimal. \citet{Varma2022} studies an open network model and proposes an asymptotically optimal policy. Examples of other papers that use spatial models for pricing include \citet{Yang2018}, \citet{JacobRoetGreen2021}, and \citet{HuHuZhu2022}. 
	
	Several other researchers focused on different aspects of the ride-hailing and taxi systems without using queueing theoretic models. These include \citet{Wang17}, \citet{AtaBarjestehKumar_Empirical}, \citet{Bertsimas2019}, \citet{Besbes2021a}, \citet{Candogan}, \citet{Cachon}, \citet{Castillo}, \citet{Chen16}, \citet{GargNazerzadeh2021}, \citet{Gokpinar}, \citet{Guda2019}, \citet{HeHuZhang2020}, \citet{HuHuZhu2022}, \citet{HuZhou2021}, \citet{Korolko2020}, and \citet{Lu2018}.

	This paper also contributes to the broader literature on dynamic control of queueing systems. Two prominent approaches in that literature are: (i) Markov decision process (MDP) formulations, and (ii) heavy traffic approximations. Intuitively, the workload problem studied in Sections \ref{sec:5}--\ref{sec:7} relates to the service rate and admission control problems studied using MDP formulations, see for example \citet{StidhamWeber1989} and references therein. The most closely related papers are \citet{GeorgeHarrison2001} and \citet{Ata2005}. These papers study the service rate control problems for an $M/M/1$ queue and provide closed-form solutions; also see \citet{AtaShneorson2006}, \citet{AtaZachariadis2007}, \citet{Adusumilli2010}, and \citet{Kumar_et_al_2013}.

	The second approach is pioneered by \citet{Harrison1988}, also see \citet{Harrison2000, Harrison2003}. In particular, a number of papers studied drift rate control problems for one-dimensional diffusions arising under heavy traffic approximations, see \citet{AtaHarrisonShepp2005}, \citet{Ata2006}, \citet{Ghosh2007, Ghosh2010}, \citet{AtaRubino2009}, \citet{KimWard2013}, and \citet{AtaTongarlak2013}. More recently, \citet{AtaBarjesteh2020} and \citet{Ata_Volunteer2} studied drift-rate control problems arising in different contexts such as volunteer capacity management and make-to-stock manufacturing. The analysis of the drift-rate control problem solved in this paper differs significantly from the analysis in those papers because it involves a quadratic cost of drift rate, unbounded set of feasible drift rates, and an unbounded state space. The combination of these features lead to a more challenging analysis. Our paper also makes a modeling contribution by formulating the dynamic dispatch and pricing control problem that incorporates travel times. Furthermore, it proposes an analytically tractable approximation in the heavy traffic limit and solves that in closed form.

	Lastly, our paper draws on the literature of the asymptotic analysis of closed queueing networks with infinite-server queues, see for example \citet{KoganLipsterSmorodinskii}, \citet{Smorodinskii}, \citet{KoganLipster}, and \citet{Krichagina}.
	
	\section{Model} \label{sec:3} 
	
	Motivated by the taxi and ride-hailing application described in the introduction, we consider a closed queueing network with $n$ jobs, $I$ buffers, $I$ single-server nodes, and one infinite-server node. Figure \ref{fig:TaxiNetwork} displays an illustrative network with $I=4$ and $J=10$, also see Section \ref{sec:SimulationStudy} for the motivation behind this example.

    \begin{figure}[h!]
		\centering
        \begin{tikzpicture}
            \node[three sided] (b_1) at (0,3) {$1$}; 
            \node[three sided] (b_2) at (3,3) {$2$}; 
            \node[three sided] (b_3) at (6,3) {$3$}; 
            \node[three sided] (b_4) at (9,3) {$4$}; 
            \node[shape=circle, draw] (s_1) at (0,0) {$1$}; 
            \node[shape=circle, draw] (s_2) at (3,0) {$2$}; 
            \node[shape=circle, draw] (s_3) at (6,0) {$3$}; 
            \node[shape=circle, draw] (s_4) at (9,0) {$4$}; 
            \node[shape=ellipse, minimum width=2cm, minimum height=1cm, draw] (inf_s) at (4.5, -3) {$ $};
            \draw[->] (s_1) -- (inf_s);
            \draw[->] (s_2) -- (inf_s);
            \draw[->] (s_3) -- (inf_s);
            \draw[->] (s_4) -- (inf_s);
            \draw[->] ([yshift=-0.6cm]b_1.center) -- (s_1) node [midway,left] {1};
            \draw[->] ([yshift=-0.6cm]b_2.center) -- (s_2) node [midway,left] {2};
            \draw[->] ([yshift=-0.6cm]b_3.center) -- (s_3) node [midway,left] {3};
            \draw[->] ([yshift=-0.6cm]b_4.center) -- (s_4) node [midway,left] {4};
            \draw[->] ([yshift=-0.6cm]b_1.center) -- (s_2) node [pos=0.4,above] {6};
            \draw[->] ([yshift=-0.6cm]b_2.center) -- (s_1) node [pos=0.4,above] {5};
            \draw[->] ([yshift=-0.6cm]b_2.center) -- (s_3) node [pos=0.4,above] {8};
            \draw[->] ([yshift=-0.6cm]b_3.center) -- (s_2) node [pos=0.4,above] {7};
            \draw[->] ([yshift=-0.6cm]b_3.center) -- (s_4) node [pos=0.4,above] {10};
            \draw[->] ([yshift=-0.6cm]b_4.center) -- (s_3) node [pos=0.4,above] {9};
            \draw (4.5,-3.5) -- (4.5,-4.5) -- (-1,-4.5) node [midway,above] {$q_1$} -- (-1,4) -- (0,4);
            \draw[->] (0,4) -- (0,3.7);
            \draw (4.5,-4.5) -- (4.5,-5) --
            (-1.5,-5) node [midway,below] {$q_2$} -- (-1.5,4.5) -- (3,4.5);
            \draw[->] (3,4.5) -- (3,3.7);
            \draw (4.5,-4.5) -- (10,-4.5) node [midway,above] {$q_4$} -- (10,4) -- (9,4);
            \draw[->] (9,4) -- (9,3.7);
            \draw (4.5,-5) -- (10.5,-5) node [midway,below] {$q_3$} -- (10.5,4.5) -- (6,4.5);
            \draw[->] (6,4.5) -- (6,3.7);
        \end{tikzpicture}
    \caption{A network with four regions and ten dispatch activities. The open rectangles are the buffers, the circles are the single servers, and the oval is an infinite-server node. The ten activities are represented by the arrows between the buffers and servers. The numbers on the arrows indicate their index. Activities 1, 2, 3, and 4 are local dispatch activities while activities 5 through 10 are non-local dispatch activities. The arrows from the infinite-server to the buffers represent probabilistic rerouting of jobs in the network.
    }
    \label{fig:TaxiNetwork}
	\end{figure}
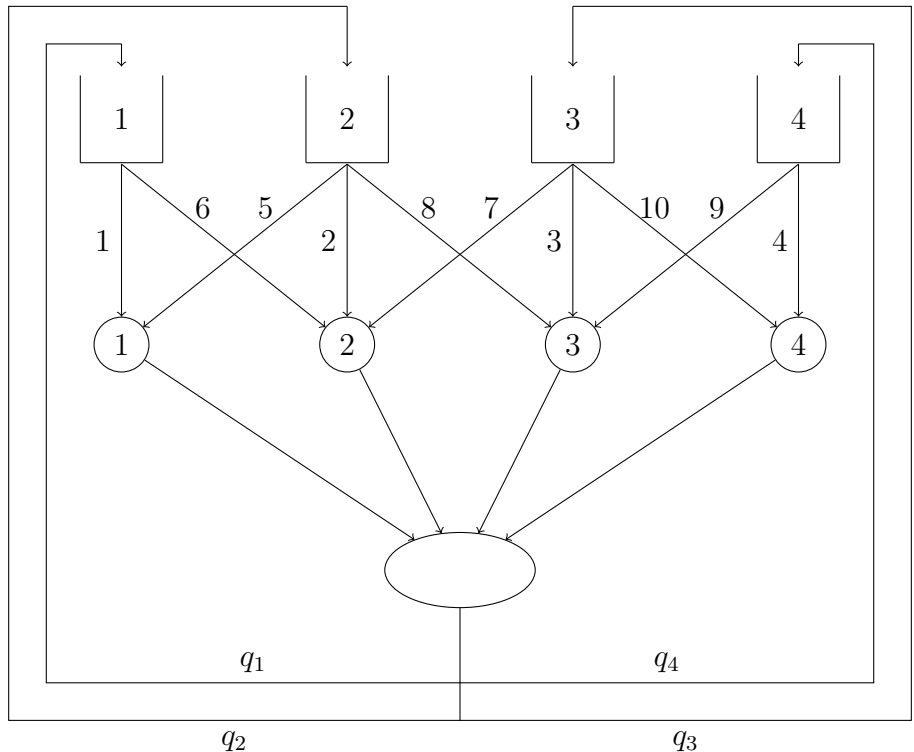
 
    As mentioned earlier, in addition to dynamic pricing decisions, the system manager also makes dispatch decisions dynamically. There are $J$ dispatch activities she can choose from. Each dispatch activity involves a unique buffer and a unique server--we use the terms single-server node and server interchangeably. Let $s(j)$ and $b(j)$ denote the server and the buffer, respectively, associated with activity $j$ for $j=1,\dots, J$. In other words, activity $j$ is undertaken by server $s(j)$ and it servers jobs in buffer $b(j)$. We describe the association between activities and resources by the capacity consumption matrix $A$ and the association between activities and buffers by the constituency matrix $C$. That is, $A$ is the $I\times J$ matrix given by
	\begin{align}
	A_{ij}=\Bigg\{
	\begin{array}{ll}
	1,&\text{if }s(j) = i,\\
	0,&\text{otherwise},\label{eq:1.1}
	\end{array}
	\end{align}and $C$ is the $I\times J$ matrix given by 
	\begin{align}
	C_{ij}=\Bigg\{
	\begin{array}{ll}
	1,&\text{if }b(j) = i,\\
	0,&\text{otherwise.}\label{eq:1.2}
	\end{array}
	\end{align}
	Let $\mathcal{A}_i$ denote the set of activities server $i$ undertakes. Similarly, let $\mathcal{C}_i$ denote the set of activities that serve buffer $i$. We have that
	\begin{align}
	\mathcal{A}_i &= \left\{j: A_{ij} = 1\right\},\label{eq:1.3}\\
	\mathcal{C}_i &= \left\{j: C_{ij} = 1\right\}.\label{eq:1.4}
	\end{align}
	
	For each activity $j=1,\dots, J$, we associate a unit rate Poisson process $N_j$. We also associate a unit rate Poisson process $N_0$ with the infinite-server node. The processes $N_0,N_1,\dots, N_J$ are mutually independent. The service rate at the infinite-server node is denoted by $\eta>0$. We denote the service rate for activity $j$ at time $t$ by $\mu_j(t)$ for $t\ge 0$ and $j=1,\dots, J$. The system manager chooses prices $p(t)= \left(p_i(t)\right)$ dynamically over time, where $p_i(t)$ denotes the price charged to customers who seek rides from region $i$ at time $t$. As the reader will see below, these prices ultimately determine activity service rates $\mu_j(t)$ for $j=1,\dots, J$ and $t\ge 0$. We assume $p_i(t)\in [\underline{p}_i, \overline{p}_i]$ for $t\geq 0$, where $0\leq \underline{p}_i< \overline{p}_i<\infty$. The price sensitivity of demand is captured by a nonnegative demand function $\Lambda:\mathcal{P}\rightarrow \mathbb{R}_+^I$, where ${\cal P}=\prod_{i=1}^I [\underline{p}_i, \overline{p}_i]$. Namely, the demand rate vector at time $t$, denoted by $\lambda(t)$, is given by\footnote{The customer demand rate in region $i$, $\lambda_i(t)$, depends only on the price $p_i(t)$.}
	\begin{align}
	\lambda(t) = \Lambda\left(p(t)\right)=\left(\Lambda_1(p_1(t)),\dots, \Lambda_I(p_I(t))\right)^{\prime},\quad t\ge 0.\label{eq:7}
	\end{align}
	We make the following monotonicity assumption to simplify the analysis:
    \begin{assumption}
        The demand rate function is strictly decreasing in price, i.e., $\Lambda_i(p_i)$ is strictly decreasing in $p_i$ for $i=1,\ldots, I$. 
    \end{assumption}
     From this monotonicity assumption, it follows that $\Lambda_i(\cdot)$ has an inverse function, denoted by $\Lambda_i^{-1}(\cdot)$.
     Moreover, the pricing decisions can be replaced with choosing the demand rate vector $\lambda(t)$ dynamically over time. This is convenient for our analysis. In order to proceed with that approach, we first define the set of admissible demand rate vectors $\mathcal{L}\subseteq \mathbb{R}_+^I$ as follows:
	\begin{align}
	    {\cal L}=\prod_{i=1}^I {\cal L}_i, \label{eq:8}
	\end{align} where ${\cal L}_i=[\Lambda_i(\overline{p}_i), \Lambda_i(\underline{p}_i)]$ for $i=1,\ldots,I$. Denoting $\Lambda^{-1}(x) = \left(\Lambda_1^{-1}(x_1),\dots, \Lambda_I^{-1}(x_I)\right)^{\prime}$ for $x\in \mathcal{L}$, it is easy to see that $\Lambda^{-1}$ is the inverse function of $\Lambda$. Viewing the demand rates as the platform's pricing control, we define the revenue rate function $\pi:\mathcal{L}\rightarrow \mathbb{R}$ as follows:
	\begin{align}
	\pi(x) = \sum_{i=1}^{I}x_i\Lambda_i^{-1}(x_i),\quad x\in \mathcal{L}.\label{eq:1.10}
	\end{align}
	We also make the following regularity assumptions for the revenue rate function:
	\begin{assumption}\label{ass:2} 
		The revenue rate function $\pi$ is: (a) three-times continuously differentiable and strictly concave on $\mathcal{L}$, and (b) has a maximizer in the interior of $\mathcal{L}$.
	\end{assumption} 
	
	Upon completing service at a single-server node, each job goes next to the infinite-server node. Once its service there is complete, the job next joins buffer $i$ with probability $q_i>0$ for $i=1,\dots, I$ where $\sum_{i=1}^{I}q_i=1$. The routing probability vector $q=\left(q_i\right)$ does not depend on the single-server node the job departed from prior to joining the infinite-server node. In other words, customers' destination distribution is identical across different origins. This is a restrictive assumption, but it simplifies the analysis significantly and enables us to incorporate travel times into the model. As discussed in the Introduction, we view this as an important first step in the analysis of ride-sharing network models that incorporate travel times. In order to model this probabilistic routing structure mathematically, we let $\psi=\left\{\psi(l),\,l\ge 1\right\}$ denote a sequence of $I$-dimensional i.i.d. random vectors with $P\left(\psi(1)=e_i\right)=q_i$ for $i=1,\dots, I$, where $e_i$ is an $I$-dimensional vector with one in the $i$th component and zeros elsewhere. Then letting
	\begin{align}
	\Psi(m)  = \sum_{l=1}^{m}\psi(l)\quad\text{for}\quad m\ge 1,
	\end{align}
	we note that the $i$th component of $\Psi(m)$, denoted by $\Psi_i(m)$, represents the total number of jobs routed to buffer $i$ among the first $m$ jobs that have finished service at the infinite-server node.
	
	As discussed earlier, there are two types of control decisions that the system manager must make. First, she must choose an $I$-dimensional demand rate process $\lambda=\left\{\lambda(t),\,t\ge 0\right\}$.  This is equivalent to making dynamic pricing decisions. Recall that the customer arrival process at single-server node $i$ corresponds to its service process. Because these customers can be transported by cars in regions corresponding to activities $j\in\mathcal{A}_i$, we let
	\begin{align}
	\mu_j(t) = \lambda_i(t)\quad\text{for}\quad j\in \mathcal{A}_i,\quad i=1,\dots, I,\quad\text{and}\quad t\ge 0.\label{eq:9}
	\end{align}
	This defines the $J$-dimensional service rate process $\mu=\left\{\mu(t),\,t\ge 0\right\}$, where $\mu(t)=\left(\mu_j(t)\right)$. Second, she must decide on how servers allocate their time to various (dispatch) activities. This decision takes the form of cumulative allocation processes $T_j=\left\{T_j(t),\,t\ge 0\right\}$ for $j=1,\dots, J$. In particular, $T_j(t)$ represents the cumulative amount of time server $s(j)$ devotes to activity $j$ (serving class $i(j)$ jobs) during $[0,t]$.
	
	Next, we introduce the system dynamics equations that govern the movement of jobs in the network. To that end, we let $Q_0(t)$ and $Q_i(t)$ denote the number of jobs in the infinite-server node and in buffer $i$ at time $t$, respectively, for $i=1,\dots, I$. We also let $A_0(t)$ and $A_i(t)$ be the total number of jobs that have arrived to the infinite-server node and to buffer $i$ by time $t$, respectively, for $i=1,\dots, I$. Then we have that
	\begin{alignat}{2}
	A_0(t) &= \sum_{j=1}^{J}N_j\left(\int_{0}^{t}\mu_j(s)\,dT_j(s)\right),\quad&& t\ge 0,\label{eq:2.1}\\
	A_i(t) &= \Psi_i\left(N_0\left(\eta\int_{0}^{t}Q_0(s)\,ds\right)\right),\quad &&t\ge 0.\label{eq:2.2}
	\end{alignat}
	Moreover, letting $D_0(t)$ and $D_i(t)$ denote the total number of jobs that have left the infinite-server node and buffer $i$ by time $t$, respectively, for $i=1,\dots, I$, we have that
	\begin{alignat}{2}
	D_0(t) &= N_0\left(\eta\int_{0}^{t}Q_0(s)\,ds\right),\quad &&t\ge 0,\label{eq:2.3}\\
	D_i(t) &= \sum_{j\in\mathcal{C}_i}N_j\left(\int_{0}^{t}\mu_j(s)\,dT_j(s)\right),\quad &&t\ge 0.\label{eq:2.4}
	\end{alignat}
	 We refer to the $(I+1)$-dimensional process $Q=(Q_0,Q_1,\dots, Q_I)^{\prime}$ as the queue length process, whose dynamics is given next:
	\begin{alignat}{2}
	Q_i(t) &= Q_i(0) + A_i(t) - D_i(t)\quad &&\text{for}\quad i=0,1,\dots, I\quad \text{and}\quad t\ge 0,\label{eq:2.5}
	\end{alignat}
	where $Q(0)$ is the vector of initial queue lengths such that $\sum_{i=0}^{I}Q_i(0)=n$. Letting $I_i(t)$ denote the cumulative amount of time that server $i$ is idle during the interval $[0,t]$ for $i=1,\dots, I$, we have that
	\begin{align}
	I_i(t) = t- \sum_{j\in\mathcal{A}_i}T_j(t),\quad t\ge 0,\label{eq:2.6}
	\end{align}
	or in matrix notation, $I(t)=et - AT(t)$ for $t\ge 0$. Note that Equations (\ref{eq:2.1})--(\ref{eq:2.5}) imply that 
	\begin{align*}
	\sum_{i=0}^{I}Q_i(t) = \sum_{i=0}^{I}Q_i(0)=n\quad \text{for}\quad t\ge 0,
	\end{align*}
	expressing the fact that the total number of jobs in the system remains fixed in a closed network.
	
	In order to state the platform's objective and its control problem formally, we introduce two vectors of cost parameters $h=\left(h_0, h_1,\dots, h_I\right)^{\prime}\in\mathbb{R}_+^{I+1}$ and $c=\left(c_1,\dots, c_I\right)^{\prime}\in\mathbb{R}_+^I$. In the context of the ride-hailing system, the platform incurs a fuel cost at a rate of $h_0$ per traveling car. Moreover, for $i=1,\dots, I$, there is a holding cost at a rate of $h_i$ for each car waiting for a ride in region $i$, reflecting the fact that no driver likes sitting idle. We assume that $h_i> h_0$ for all $i=1,\dots, I$.
 Finally, for $i=1,\dots, I$, there is an idleness cost at the rate of $c_i$ per unit of time server $i$ is idle. This represents the lost revenue from picking up customers arriving to region $i$ and goodwill loss.\footnote{One can assume $c_i \ge p_i^* = \Lambda_i^{-1}(\lambda_i^\ast)$ naturally, where $\lambda^*$ is defined in Equation (\ref{eq:1.11}) below.} A control policy is denoted by $(T, \lambda)$ and
	must satisfy the following conditions: 
	\begin{align}
	&\text{$T$, $\lambda$ are nonanticipating with respect to $Q$,}\label{eq:2.8}\\
	&\text{$T$, $I$ are nondecreasing and continuous with $T(0)=I(0)=0$,}\label{eq:2.9}\\
	&\text{$\lambda(t)\in\mathcal{L}$ for all $t\ge 0$,}\label{eq:2.10}\\
	&\text{$Q_i(t)\ge 0$ for all $t\ge 0$, $i=0,1,\dots, I$.} \label{eq:2.11}
	\end{align}
	Equation (\ref{eq:2.8}) expresses the fact that the policy can only depend on observable quantities, Equation (\ref{eq:2.9}) is natural given the interpretations of the processes $T$ and $I$. Equation (\ref{eq:2.10}) requires that $\lambda$ come from the set of achievable demand rates. Equation (\ref{eq:2.11}) expresses the fact that queue lengths are nonnegative. The arriving customer demand is allocated to cars waiting in various buffers through the dispatch activities $j=1,\ldots, J$, see for example Equations (\ref{eq:2.1}) and (\ref{eq:2.4}). Given a control policy $(T,\lambda)$, we define the cumulative profit collected up to time $t$ as
	\begin{align}
	V(t) = \int_{0}^{t} \left[\pi\left(\lambda(s)\right) - h^{\prime}Q(s)\right]\,ds - c^{\prime}I(t),\quad t\ge 0.\label{eq:2.12}
	\end{align}
	The platform's control problem is to choose a policy $(T,\lambda)$ so as to 
	\begin{alignat}{2}
	&\text{maximize}\quad &&\liminf_{t\rightarrow \infty}\,\,\frac{1}{t}\,\,E\left[V(t)\right]\label{eq:2.13}\\
	&\text{subject to} && (\ref{eq:2.1})\text{--}(\ref{eq:2.12}).\label{eq:2.14}
	\end{alignat}
	Because control problem (\ref{eq:2.13})--(\ref{eq:2.14}) in its original form is not amenable to exact analysis, the next section considers a related control problem in an asymptotic regime where the number of cars gets large and derives the approximating Brownian control problem. The Brownian control problem is an approximation to the original problem, yet it is far more tractable.
	
	\section{Brownian Control Problem}\label{sec:4}
	Following an approach that is similar to the one taken in \citet{Harrison1988}, this section develops a Brownian approximation to the control problem presented in Section \ref{sec:3}. Many authors have proved heavy traffic limit theorems to rigorously justify such Brownian approximations---see for example \citet{Harrison1998}, \citet{Williams1998}, \citet{Kumar2000}, \citet{BramsonDai2001}, \citet{Stolyar2004}, \citet{BellWilliams2001, BellWilliams2005}, \citet{AtaKumar05}, \citet{AtaOlsen2009, AtaOlsen2013} and references therein. We do not attempt to prove a rigorous convergence theorem in this paper, but refer the reader to \citet{Harrison1988, Harrison2000, Harrison2003} for elaborate and intuitive justifications of the approximation procedure we follow.
	
	The approximation procedure starts by solving the following static pricing problem (existence of the optimal solution is guaranteed by Assumption \ref{ass:2}), which helps us articulate the heavy traffic assumption that underlies the mathematical development to follow. We set
	\begin{align}
	\lambda^* = \underset{\lambda\in\mathcal{L}}{\text{arg\,max}}\,\,\,\pi(\lambda).\label{eq:1.11}
	\end{align}
    Recall from Assumption \ref{ass:2} that we assume $\lambda^\ast$ is in the interior of ${\cal L}$, i.e., $\lambda^\ast\in {\rm int}({\cal L})$.
	The vector $\lambda^*$ represents the average demand rates that would result in the largest revenue rate ignoring variability in the system. Note that the corresponding nominal service rates for the various activities are given by\footnote{In particular, for all $j$, $\mu_j^* = \sum_{i=1}^{I}\lambda_i^* A_{ij}$. This is true because there exists only one $i$ such that $A_{ij} \ne 0$ for each $j=1,\dots, J$. That is, an activity only uses one server. In matrix notation, $\mu^* = A^{\prime}\lambda^*$, where $A^{\prime}$ is the transpose of $A$.\vspace{0.5em}}
	\begin{align}
	\mu_j^* = \lambda_i^*\quad\text{for}\quad j\in \mathcal{A}_i.\label{eq:1.12}
	\end{align}
	Using these nominal service rates, we define an $I\times J$ input-output matrix $R$ as follows:
	\begin{align}
	R_{ij} = \mu_j^* C_{ij},\quad i=1,\dots, I,\quad j=1,\dots, J.\label{eq:1.7}
	\end{align}
	Following \citet{Harrison1988, Harrison2000}, we interpret $R_{ij}$ as the long-run average rate of class $i$ material consumed per unit of activity $j$ under the nominal service rates $\mu_j^*$ for $j=1,\dots, J$. We also define the $I$-dimensional input vector $\nu$ as
	\begin{align}
	\nu_i = q_i\eta,\quad i=1,\dots, I.\label{eq:1.8*}
	\end{align}
	We interpret $\nu_i$ as the long-run average rate of input into buffer $i$ from the infinite-server node. As a preliminary to stating the heavy traffic assumption, we introduce the notion of local activities. In the context of the motivating application, it corresponds to a customer in a region being picked up by a car in the same region. Using the terminology that is standard in queueing theory, it corresponds to a server processing its own buffer. Without loss of generality, we assume that the first $I$ activities are local. That is, \[s(j)=b(j)=j \quad  \text{for} \quad j=1,\ldots, I.\] This is equivalent to assuming that the first $I$ columns of matrices $A$ and $C$ constitute the $I\times I$ dimensional identity matrix. 
	The following is the heavy traffic assumption:
	\begin{assumption}\label{ass:3}
		There exists a unique $x^*\in\mathbb{R}^J$ such that 
		\begin{align}
        x_j^\ast &= \min \{\lambda_j^*, \nu_j\}, \quad j=1,\ldots, I,  \label{eq:1.14} \\
		Rx^*&=\nu,\label{eq:1.13}\\
		Ax^*&= e,\label{eq:1.15}\\
		x^*&\ge 0.\label{eq:1.16}
		\end{align}
	\end{assumption}
	The vector $x^*$ is referred to as the nominal processing plan and the component $x_j^*$ can be interpreted as the long-run average rate at which activity $j$ is undertaken. Equation (\ref{eq:1.16}) says that all nominal activity levels must be non-negative. Equation (\ref{eq:1.15}) means that under the nominal processing plan, servers are fully utilized. Equation (\ref{eq:1.13}) is a flow balance condition which says that the rate of jobs leaving the buffers equals the rate of jobs entering the buffers under the nominal processing plan. Note that by Equations (\ref{eq:1.7})--(\ref{eq:1.13}) we have $\sum_{j\in\mathcal{C}_i}\mu_j^* x_j = q_i\eta$ for each $i$, which then implies that $(\mu^*)^{\prime}x^* =\eta$ by summing over $i$. We interpret $(\mu^*)^{\prime}x^*$ as the rate of jobs entering the infinite-server node under the nominal processing plan, and Assumption \ref{ass:3} ensures that this equals the service rate at the infinite-server node.\footnote{Based on intuition from the classical $M/M/\infty$ queue, this condition implies that the steady-state fraction of jobs in the infinite-server node under the nominal processing plan is equal to one as the number of jobs in the system grows, i.e. as $n\rightarrow\infty$.\vspace{0.5em}}
    Equation (\ref{eq:1.14}) ensures that local activities are used at maximal rates. In the context of the motivating application, this means customer demand is met by cars in the same region as much as possible.

	Following \citet{Harrison2000}, we call activity $j$ basic if $x_j^*>0$, whereas it is called nonbasic if $x_j^*=0$. We let $b$ denote the number basic activities. After possibly relabeling, we assume without loss of generality that activities $1,\dots, b$ are basic and that activities $b+1,\dots, J$ are nonbasic. Recall that the first $I$ of them are the local activities. As done in \citet{Harrison2000}, we partition the matrices $R$ and $A$ as follows:
	\begin{align}
	R=\left[H\quad K\right]\qquad\text{and}\qquad A=\left[B\quad N\right],\label{eq:30}
	\end{align}
	where $H,B\in\mathbb{R}^{I\times b}$ and $K, N\in\mathbb{R}^{I\times (J-b)}$. The submatrices $H$ and $B$ correspond to the basic activities of $R$ and $A$, respectively, while the submatrices $K$ and $N$ correspond to the nonbasic activities.
	
	In order to derive the approximating Brownian control problem, we consider a sequence of closely related systems indexed by the total number of jobs $n$. The formal limit of this sequence as $n\rightarrow\infty$ is the approximating Brownian control problem.
We attach a superscript of $n$ to quantities associated with the $n$th system in the sequence. To be specific, we define the scaled demand rate function $\Lambda^{n}:\mathcal{P}\rightarrow \mathbb{R}_+^I$  by 
	\begin{align}
	\Lambda^n(x) = n\Lambda(x),\quad x\in\mathcal{P}.\label{eq:31}
	\end{align}
	Then we define the set of admissible scaled demand rate vectors $\mathcal{L}^n$ as the following:
	\begin{align}
	\mathcal{L}^n = \left\{\lambda^n\in\mathbb{R}_+^I: \lambda^n = \Lambda^n(p)\text{ for some }p\in\mathcal{P}\right\}.\label{eq:32}
	\end{align}
	We note from Equations (\ref{eq:7})--(\ref{eq:8}) and (\ref{eq:31})--(\ref{eq:32}) that $\mathcal{L}^n= n\mathcal{L}$, and that $\Lambda^n$ has the inverse function $\left(\Lambda^n\right)^{-1}(x)=\left(\left(\Lambda_1^n\right)^{-1}(x_1),\dots, \left(\Lambda_I^n\right)^{-1}(x_I)\right)^{\prime}$ for $x\in \mathcal{L}^n$. We define the scaled revenue rate function $\pi^n$ as follows:
	\begin{align}
	\pi^n(x) = \sum_{i=1}^{I}x_i\left(\Lambda_i^n\right)^{-1}(x_i),\quad x\in\mathcal{L}^n.\label{eq:3.3}
	\end{align}
	Observing that $nx\in\mathcal{L}^n$ if and only if $x\in\mathcal{L}$, it can equivalently be shown that\footnote{The first equality in (\ref{eq:3.4*}) is proved by applying (\ref{eq:3.3}) and noting that $\left(\Lambda^n\right)^{-1}(nx) = \Lambda^{-1}(x)$ for $x\in\mathcal{L}$. The second equality in (\ref{eq:3.4*}) then follows by (\ref{eq:1.10}).\vspace{0.5em}}
	\begin{align}
	\pi^n(nx) = n \pi(x) = n\sum_{i=1}^{I}x_i\Lambda_i^{-1}(x_i),\quad x\in\mathcal{L}.\label{eq:3.4*}
	\end{align}
	Therefore, in the $n$th system, the revenue rate process is simply scaled by $n$. We also scale the holding cost rates $h^n$ and the idleness cost rates $c^n$ as follows:
	\begin{alignat}{2}
	h_i^n&= \frac{h_i}{\sqrt{n}},&&\quad i=0,1,\dots, I,\label{eq:3.5**}\\
	c_i^n &= \sqrt{n} c_i,&&\quad i=1,\dots, I.\label{eq:3.5*}
	\end{alignat}
    Lastly, we allow the mean travel time to vary with $n$ as follows: \begin{align}
        \eta^n &= \eta+\frac{\hat{\eta}}{\sqrt{n}},  \label{eq:TravelTime}
    \end{align}
    where $\hat{\eta}\in\mathbb{R}$.
	As observed in \citet{KoganLipster} and \citet{Ata_Volunteer2}, under our heavy traffic assumption we expect that the queue lengths at the buffers to be of order $\sqrt{n}$ and that the number of jobs in the infinite-server node be of order $n$. Therefore, we define the centered and scaled queue length processes as follows:
	\begin{alignat}{2}
	Z_0^n(t) &= \frac{1}{\sqrt{n}}\left(Q_0^n(t)-n\right)\quad\text{and}\quad
	Z_i^n(t)  = \frac{1}{\sqrt{n}}Q_i^n(t)\quad \text{for}\quad i=1,\dots, I,\quad t\ge 0.\label{eq:38}
	\end{alignat}
	Observe that since $\sum_{i=0}^{I}Q_i^n(t)=n$ for all $t\ge 0$, it follows that $\sum_{i=0}^{I}Z_i^n(t)=0$ for all $t\ge 0$. 
	
	As argued in \citet{Harrison1988} (see also \citet{Harrison2000, Harrison2003}), any policy worthy of consideration satisfies $T^n(t)\approx x^*t$, for all $t\ge 0$ and large $n$. That is, the nominal allocation rate $x^*$ given in Assumption \ref{ass:3} should give a first-order approximation to the allocation rates of the various activities under policy $T^n$. However, the system manager can choose the second-order, i.e., order $1/\sqrt{n}$, deviations from that. In order to capture such deviations from the nominal rates, we define the centered and scaled processes as follows:
	\begin{align}
	Y_j^n(t) =\sqrt{n}\left(x_j^*t - T_j^n(t)\right), \quad j = 1,\dots, J,\quad t\ge 0,\label{eq:39}
	\end{align}
	Similarly, in the heavy traffic regime, we expect the servers to be always busy to a first-order approximation, but they may incur idleness on the second order, i.e., order $1/\sqrt{n}$. As such, we define the scaled idleness processes as follows:
	\begin{align}
	U_i^n(t)&=\sqrt{n} I_i^n(t),\quad i=1,\dots, I,\quad t\ge 0.\label{eq:40}
	\end{align}
	Then, it follows from Equations (\ref{eq:2.6}) and (\ref{eq:1.15}) that
	\begin{align}
	U_i^n(t)&=\sum_{j\in\mathcal{A}_i}Y_j^n(t),\quad i=1,\dots, I,\quad t\ge 0.\label{eq:41}
	\end{align}
	In addition, we define the centered and scaled demand and service rate processes, respectively, as follows:
	\begin{alignat}{3}
	\zeta_i^n(t) & =\frac{1}{\sqrt{n}} \left(\lambda_i^n(t) - n\lambda_i^*\right), &&\quad i=1,\dots, I,&&\quad t\ge 0,\label{eq:41*}\\
	\kappa_j^n(t) &= \frac{1}{\sqrt{n}} \left(\mu_j^n(t) - n\mu_j^*\right), &&\quad j=1,\dots, J,&&\quad t\ge 0.\label{eq:42}
	\end{alignat}
	Note that by Equation (\ref{eq:9}) we have $\kappa_j^n(\cdot)= \zeta_i^n(\cdot)$ for each $j\in \mathcal{A}_i$. Finally, we define the centered cumulative profit function. To do so, we first introduce the auxiliary function $\tilde{V}^n$ that will serve as the centering function. To be specific, we define
	\begin{align}
	\tilde{V}^n(t) = n\left[\pi\left(\lambda^*\right)-h_0^n\right]t = n\pi\left(\lambda^*\right)t - \sqrt{n}h_0 t,\quad t\ge 0,\label{eq:3.18*}
	\end{align}
	where the second equality follows from the definition of $h_0^n$, see Equation (\ref{eq:3.5**}). Note that $\tilde{V}^n(t)$ does not depend on the system manager's control. Therefore, instead of maximizing the average profit, she can focus on minimizing the average cost, where the cumulative cost up to time $t$, denoted by $\hat{V}^n(t)$, is defined as follows:
	\begin{align}
	\hat{V}^n(t) = \tilde{V}^n(t) - V^n(t),\quad t\ge 0.\label{eq:44}
	\end{align}
	We then proceed with replacing the processes $Z^n$, $Y^n$, $U^n$, $\zeta^n$, $\kappa^n$, and $\hat{V}^n$ with their formal limits $Z$, $Y$, $U$, $\zeta$, $\kappa$, and $\xi$, respectively, as $n\rightarrow\infty$. In particular, the cost process $\xi$ in the Brownian approximation is given by
	\begin{align}
	\xi(t) = \int_{0}^{t}\left(\sum_{i=1}^I \alpha_i\zeta_i^2(s) + \sum_{i=0}^{I}h_iZ_i(s)\right)\,ds +c^{\prime}U(t),\quad t\ge 0,\label{eq:45}
	\end{align} 
	where $\alpha_i = -\left(\Lambda_i^{-1}\right)^{\prime}(\lambda_i^*) -(\lambda^*_i/2)\times \left(\Lambda_i^{-1}\right)^{\prime\prime}(\lambda_i^*)>0$ for $i=1,\dots, I$. The steps outlining the formal derivation of the Brownian Control Problem and of Equation (\ref{eq:45}) are given in Appendix \ref{app:A}.
	
	The Brownian control problem (BCP) is given as follows: Choose processes $Y=\left(Y_j\right)$ and $\zeta =\left(\zeta_i\right)$ that are nonanticipating with respect to $B$ so as to
	\begin{align}
	& \text{minimize}\quad \limsup_{t\rightarrow \infty}\,\,\frac{1}{t}\,\,E\left[\xi(t)\right]\label{eq:3.16}\\
	& \text{subject to}\notag\\
	& Z_i(t) = B_i(t) - q_i\eta\int_{0}^{t}\sum_{i=1}^{I}Z_i(s)\,ds-\sum_{j\in\mathcal{C}_i}\int_{0}^{t}x_j^* \kappa_j(s)\,ds + \sum_{j\in\mathcal{C}_i}\mu_j^* Y_j(t), \quad i=1,\dots, I,\quad t\ge 0,\label{eq:3.17}\\
	& Z_0(t)= - \sum_{i=1}^{I}Z_i(t),\quad t\ge 0,\label{eq:3.18}\\
	& U(t) = AY(t),\quad t\ge 0,\label{eq:3.20}\\
	& \kappa_j(t) = \zeta_i(t) \quad \text{for} \quad j\in\mathcal{A}_i,\quad i=1,\dots, I,\quad t\ge 0,\label{eq:3.22}\\
	& Z_i(t)\ge 0,\quad i=1,\dots, I,\quad t\ge 0,\label{eq:3.19}\\
	& \text{$U$ is nondecreasing with $U(0)=0$,}\label{eq:3.21}
	\end{align}
	where $B=\left\{B(t),\,t\ge 0\right\}$ is an $I$-dimensional Brownian motion with starting state $B(0)\ge 0$ that has drift rate vector $\gamma=(\gamma_i)$ where $\gamma_i=\hat{\eta}q_i$ and covariance matrix $\Sigma$ given by
	\begin{alignat}{2}
	\Sigma_{ii} = q_i \eta + \sum_{j\in\mathcal{C}_i} \mu_j^* x_j^*\qquad \text{and}\qquad \Sigma_{ii^{\prime}} = q_i q_{i^{\prime}}\eta\qquad\text{for}\qquad  i,i^{\prime}=1,\dots, I,\quad i\ne i^{\prime}.  \label{eq:DriftCovariance}
	\end{alignat}Although the BCP (\ref{eq:3.17})--(\ref{eq:3.22}) is simpler than the original control problem that it approximates, it is not easy to solve because it is a multidimensional stochastic control problem. Thus, we further simplify it in Section \ref{sec:5} and derive an equivalent workload fomulation that is one-dimensional under the complete resource pooling condition which we solve analytically in Section \ref{sec:6}.

	\section{Equivalent Workload Formulation}\label{sec:5} 
	As a preliminary to the derivation of the workload problem, letting $Z=(Z_1,\ldots,Z_I)'$ and using Equation (\ref{eq:1.7}), we first rewrite Equation (\ref{eq:3.17})
	in vector form as follows:
	\begin{align}
	Z(t) &= B(t) -\eta q\int_{0}^{t}e^{\prime} Z(s)\,ds -C\,\text{diag}(x^*)\int_{0}^{t}\kappa(s)\,ds + RY(t),\quad t\ge 0,  \label{eq:EWF}
	\end{align}
	where $e$ is an $I$-dimensional vector of ones and $\text{diag}(x^*)$ is the $J\times J$ diagonal matrix whose $(j,j)$th element is $x^*_j$. 
 
	Motivated by the development in \citet{HarrisonVanMieghem1997} and \citet{Harrison2000}, we define the space of reversible displacements as follows:
	\begin{align}
	\mathcal{N}= \left\{Hy_B: By_B=0,\, y_B\in\mathbb{R}^b\right\},
	\end{align}
	where $y_{B}\in\mathbb{R}^{b}$ is the vector consisting of the components of $y$ indexed by the basic activities $j=1,\dots, b$. We let $\mathcal{M}= \mathcal{N}^{\perp}$ be the orthogonal complement of the space $\mathcal{N}$ and call $d=\dim(\mathcal{M})$ the workload dimension.
	Any $d\times I$ matrix $M$ whose rows form a basis for $\mathcal{M}$ is called a workload matrix. 
	Lemma \ref{lem:2} provides a canonical choice of the workload matrix $M$ based on the notion of communicating buffers, which is defined next, see \citet{AtaBarjestehKumar_SPN}. Also see \citet{HarrisonLopez1999} for a related definition of communicating servers.
	\begin{defn}
		Buffers $i$ and $i^{\prime}$ are said to \textit{communicate directly} if there exist basic activities $j$ and $j^{\prime}$ such that $i=b(j)$, $i^{\prime}=b(j^{\prime})$, and $s(j) = s(j^{\prime})$. That is, buffers $i$ and $i^{\prime}$ are served by a common server using basic activities. Buffers $i$ and $i^{\prime}$ are said to \textit{communicate} if there exist buffers $i_1,\dots, i_l$ such that $i_1=i$, $i_l=i^{\prime}$, and buffer $i_s$ communicates directly with buffer $i_{s+1}$ for $s=1,\dots, l-1$.
	\end{defn}
	Buffer communication is an equivalence relation.
 Thus, the set of buffers can be partitioned into $L$ disjoint subsets where all buffers in the same subset communicate with each other. We call each subset a buffer pool and denote the $l$th buffer pool by $\mathcal{P}_l$, $l=1,\dots, L$. Associated with each buffer pool is a server pool. The $l$th server pool $\mathcal{S}_l$ is defined as follows: 
	\begin{align}
		\mathcal{S}_l = \left\{k: \exists j\in\left\{1,\dots, b\right\}\text{ s.t. }s(j)=k\text{ and }b(j)\in\mathcal{P}_l\right\},\quad l=1,\dots, L.\label{eq:4.5}
	\end{align}
	In words, server pool $l$ consists of all servers that can serve a buffer in buffer pool $l$ using a basic activity. Note that since the buffer pools partition the buffers, it follows from Equation (\ref{eq:4.5}) that the server pools partition the servers. Thus, the buffer pools and the server pools are in a one-to-one correspondence. As a result, there is an equivalent notion of server communication, but we stick with the definition of buffer communication for mathematical convenience. The following lemma characterizes the workload dimension and the workload matrix, see Appendix \ref{app:B} for its proof.
	
	\begin{lem}\label{lem:2}
		The workload dimension equals the number of buffer pools, i.e., $d=L$. Furthermore, the $L\times I$ matrix $M$ given by 
		\begin{align}
		M_{li}=\Bigg\{
		\begin{array}{ll}
		1,&\text{if $i\in\mathcal{P}_l$},\\
		0,&\text{otherwise,}
		\label{eq:4.6*}\end{array}
		\end{align}
		for $l=1,\dots, L$ and $i=1,\dots, I$ constitutes a canonical workload matrix.
	\end{lem}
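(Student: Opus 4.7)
My plan is to verify the claim about the proposed matrix $M$ directly, by showing three things: (i) each row $m_l$ lies in $\mathcal{M}$; (ii) the rows $m_1,\ldots, m_L$ are linearly independent; and (iii) every vector in $\mathcal{M}$ is a linear combination of the $m_l$'s. Claim (ii) is immediate since the rows are indicator vectors of the disjoint nonempty sets $\mathcal{P}_1,\ldots, \mathcal{P}_L$. Together with (i) and (iii), this yields $\mathcal{M}=\text{rowspan}(M)$ and $d=\dim\mathcal{M}=L$.

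For (i), I would take an arbitrary $v=Hy_B$ with $By_B=0$ and compute
\begin{align*}
m_l \cdot v \;=\; \sum_{i\in\mathcal{P}_l}\sum_{j=1}^{b}\mu_j^{*}C_{ij}(y_B)_j \;=\; \sum_{\substack{1\le j\le b \\ b(j)\in\mathcal{P}_l}}\mu_j^{*}(y_B)_j.
\end{align*}
The key structural fact is that for a basic activity $j$, one has $b(j)\in\mathcal{P}_l$ if and only if $s(j)\in\mathcal{S}_l$. One direction is the definition of $\mathcal{S}_l$ in Equation~(\ref{eq:4.5}); for the converse, any server in $\mathcal{S}_l$ has some basic activity serving a buffer in $\mathcal{P}_l$, so every other basic activity on that server has a buffer directly communicating with $\mathcal{P}_l$ and hence lying in $\mathcal{P}_l$. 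Using Equation~(\ref{eq:1.12}), which gives $\mu_j^{*}=\lambda_{s(j)}^{*}$ depending only on the server index, the sum regroups server-by-server as $\sum_{k\in\mathcal{S}_l}\lambda_k^{*}\sum_{j \text{ basic},\,s(j)=k}(y_B)_j=\sum_{k\in\mathcal{S}_l}\lambda_k^{*}(By_B)_k=0$.

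For (iii), I would use the orthogonality characterization: $v\in\mathcal{M}$ iff $H^{\prime}v\perp\ker B$ iff $H^{\prime}v\in\text{range}(B^{\prime})$, so there exists $w\in\mathbb{R}^I$ with $H^{\prime}v=B^{\prime}w$. Reading off the $j$-th component for each basic $j$ gives $\mu_j^{*}v_{b(j)}=w_{s(j)}$, hence $v_{b(j)}=w_{s(j)}/\lambda_{s(j)}^{*}$ after dividing by the positive quantity $\lambda_{s(j)}^{*}=\mu_j^{*}$. Thus, if buffers $i$ and $i^{\prime}$ communicate directly via basic activities $j$ and $j^{\prime}$ sharing a server $k$, then $v_i=w_k/\lambda_k^{*}=v_{i^{\prime}}$, and transitivity of the communication relation forces $v$ to be constant on each buffer pool. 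Setting $c_l$ equal to this common value on $\mathcal{P}_l$ gives $v=\sum_l c_l m_l\in\text{rowspan}(M)$.

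The main obstacle I anticipate is ensuring that every buffer actually belongs to some pool, so that the expansion $v=\sum_l c_l m_l$ covers all $I$ coordinates of $v$. This reduces to showing that every buffer has at least one basic activity serving it, which I would derive from Equation~(\ref{eq:1.14}) combined with Assumption~\ref{ass:2}: since $\lambda^{*}$ lies in the interior of $\mathcal{L}$, we have $\lambda_i^{*}>0$ for every $i$, and $\nu_i=q_i\eta>0$, so $x_j^{*}=\min\{\lambda_j^{*},\nu_j\}>0$ for each local activity $j=1,\ldots, I$; hence every local activity is basic and serves its own buffer. A secondary subtlety worth flagging is the weighted-versus-unweighted reconciliation in step~(i), which succeeds only because Equation~(\ref{eq:1.12}) assigns $\mu_j^{*}$ a value that depends solely on the server index. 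This structural feature is what makes the indicator-of-buffer-pools choice of workload matrix clean in our setting, and it would break down in networks where service rates depended on the buffer being served.
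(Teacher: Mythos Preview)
Your proof is correct and arrives at the same endpoint as the paper, but by a genuinely different and more economical route. The paper proceeds constructively: in its Steps~1--3 it builds an explicit spanning set for $\mathcal{N}$, showing that $\mathcal{N}=\text{span}\{e^{i}-e^{i'}: i,i'\text{ communicate directly}\}$, and only then reads off $\mathcal{M}=\mathcal{N}^{\perp}$ as the vectors constant on directly communicating (hence on communicating) buffers. You bypass the explicit description of $\mathcal{N}$ entirely. For (i) you check orthogonality to $\mathcal{N}$ by a direct calculation that regroups a sum over basic activities by server, exploiting $\mu_j^{*}=\lambda_{s(j)}^{*}$; for (iii) you invoke the range--kernel duality $(\ker B)^{\perp}=\text{range}(B')$ to turn membership in $\mathcal{M}$ into the componentwise identity $\mu_j^{*}v_{b(j)}=w_{s(j)}$, from which constancy on buffer pools drops out immediately. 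The paper's approach has the advantage of making the structure of $\mathcal{N}$ explicit, which can be reused elsewhere; your approach is shorter and isolates precisely the two structural ingredients that matter (the server-dependence of $\mu_j^{*}$ in Equation~(\ref{eq:1.12}) and the equivalence $b(j)\in\mathcal{P}_l\Leftrightarrow s(j)\in\mathcal{S}_l$ for basic $j$). Your handling of the coverage issue---that every buffer lies in some pool because each local activity is basic by Equation~(\ref{eq:1.14})---is also correct and is a point the paper leaves implicit.
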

	To facilitate the derivation of the workload state dynamics, we define the $L\times I$ matrix $G$ as follows:
	\begin{align}
	G_{lk} = \lambda_{k}^*\, \mathbf{1}_{\left\{k\,\in\,\mathcal{S}_l\right\}},\quad l=1,\dots, L,\quad k=1,\dots, I.\label{eq:4.7*}
	\end{align}
	That is, the $l$th row of $G$, $\left(G_{l1},\dots, G_{lI}\right)$ contains the nominal service rates for those servers in server pool $l$ and zeros for the rest of the servers. The next lemma provides a useful result that helps us derive the workload problem. It is proved in Appendix \ref{app:B}.
	\begin{lem}\label{lem:3}
		We have that $MR=GA$.
	\end{lem}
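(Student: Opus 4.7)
The plan is to verify $MR = GA$ entry by entry. Fix a pool index $l \in \{1,\ldots,L\}$ and an activity index $j \in \{1,\ldots,J\}$. Using the definitions of $M$ and $R$ together with the fact that $C_{ij}$ is nonzero only at $i = b(j)$, one readily computes
\[
(MR)_{lj} \;=\; \sum_{i=1}^{I} M_{li}\, \mu_j^* C_{ij} \;=\; \mu_j^* \sum_{i\in\mathcal{P}_l} C_{ij} \;=\; \mu_j^* \,\mathbf{1}_{\{b(j)\in\mathcal{P}_l\}}.
\]
Similarly, using the definitions of $G$ and $A$ together with the fact that $A_{kj}$ is nonzero only at $k = s(j)$,
\[
(GA)_{lj} \;=\; \sum_{k=1}^{I} \lambda_k^*\, \mathbf{1}_{\{k\in\mathcal{S}_l\}}\, A_{kj} \;=\; \lambda_{s(j)}^* \,\mathbf{1}_{\{s(j)\in\mathcal{S}_l\}}.
\]
By Equation (\ref{eq:1.12}), $\mu_j^* = \lambda_{s(j)}^*$, so the matrix identity reduces to the combinatorial claim that $b(j)\in\mathcal{P}_l$ if and only if $s(j)\in\mathcal{S}_l$, for every activity $j$.

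To establish that equivalence, I would exploit the one-to-one correspondence between buffer pools and server pools that is mediated by the local activities. The first $I$ activities are local with $s(j) = b(j) = j$, and each is basic because $x_j^* = \min\{\lambda_j^*,\, q_j\eta\} > 0$ (using $\lambda_j^* > 0$ from Assumption \ref{ass:2} and $q_j,\eta > 0$). Hence, for every $i$, server $i$ and buffer $i$ are linked through the local basic activity $j = i$, which implies $i \in \mathcal{S}_l \iff i \in \mathcal{P}_l$. The required equivalence then becomes: buffers $b(j)$ and $s(j)$ belong to the same buffer pool. For a basic activity $j$, this follows immediately from the direct-communication definition, applied to $j$ and the local basic activity at $s(j)$, which share server $s(j)$.

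The main obstacle I anticipate is the nonbasic case $j > b$, where the direct-communication definition does not apply. I would handle it by appealing to the structural feature of the nominal planning problem that a nonbasic activity's server and served buffer must still lie in the same pool --- intuitively, a cross-pool activity would be inconsistent with the buffer/server-pool partition induced by the basic activities --- which is the condition under which the workload formulation of Section \ref{sec:5} is well posed. Alternatively, one can observe that under the complete resource pooling condition imposed in Section \ref{sec:6} there is a single pool ($L = 1$), in which case $M = e^{\prime}$ and $G = (\lambda^*)^{\prime}$, and the identity collapses to $e^{\prime} R = (\lambda^*)^{\prime} A$, which is immediate from $R_{ij} = \mu_j^* C_{ij}$, $\mu_j^* = \lambda_{s(j)}^*$, and the column structure of $C$ and $A$.
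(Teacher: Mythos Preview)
Your computation of $(MR)_{lj}$ and $(GA)_{lj}$ and the reduction to the equivalence $b(j)\in\mathcal{P}_l \Longleftrightarrow s(j)\in\mathcal{S}_l$ is exactly what the paper does. Where you diverge is in how that equivalence is justified: the paper does not split into basic versus nonbasic activities at all; it simply asserts the equivalence ``by Equation~(\ref{eq:4.5}),'' i.e., directly from the definition of the server pools, and stops there. Your more careful treatment of the basic case (via the local activities, which you correctly argue are basic) is a genuine addition, and your unease about the nonbasic case is well founded: nothing in the definitions prevents a nonbasic activity from having its buffer in one pool and its server in another, and in such a configuration the entrywise equality would fail. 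The paper's proof does not address this, so your ``structural feature'' appeal is not something you are missing from the paper---it is a gap the paper leaves open too. Your fallback to $L=1$ is the clean resolution: under the complete resource pooling assumption that the paper imposes immediately afterward (Assumption~\ref{ass:4}), there is only one pool and the indicator equivalence is trivial, which is all that is needed for the rest of the analysis.
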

	
	We define the $L$-dimensional workload process $W=\left\{W(t),\,t\ge 0\right\}$ as 
	\begin{align}
	W(t) = MZ(t),\quad t\ge 0,\label{eq:4.7}
	\end{align}
	whose $l$th component represents the total number of jobs for the $l$th server pool at time $t$ for $l=1,\dots, L$. By Equation (\ref{eq:4.7}) and Lemma \ref{lem:3}, we arrive at the following equation which describes the evolution of the workload process:
	\begin{align}
	W(t) = \chi(t) -M\eta q\int_{0}^{t}e^{\prime}Z(s)\,ds - MC\text{diag}(x^*)\int_{0}^{t}\kappa(s)\,ds + GU(t),\quad t\ge 0,\label{eq:4.8}
	\end{align}
	where $\chi(t)= MB(t)$, so that $\chi=\left\{\chi(t),\,t\ge 0\right\}$ is a $L$-dimensional Brownian motion with drift vector $M\gamma$, covariance matrix $M\Sigma M^{\prime}$, and starting state $\chi(0)=MB(0)\ge 0$. 
	
	Next, we introduce a closely related control problem referred to as the reduced Brownian control problem (RBCP). Its state descriptor is the workload process $W$. To be more specific, the RBCP involves choosing a policy $(Z, U, \zeta)$ that is nonanticipating with respect to $\chi$ so as to
	\begin{align}
	&\text{minimize}\quad \limsup_{t\rightarrow \infty}\,\,\frac{1}{t}\,\,E\left[\int_{0}^{t}\left(\sum_{i=1}^I \alpha_i \zeta_i^2(s) + \sum_{i=1}^{I}(h_i-h_0)Z_i(s)\right)\,ds +c^{\prime}U(t)\right]\label{eq:4.9}\\
	&\text{subject to} \notag\\
	& W(t) = MZ(t),\quad t\ge 0, \label{eq:RBCP_ReduceW} \\
	& W(t) = \chi(t) -M\eta q\int_{0}^{t}e^{\prime}Z(s)\,ds -MC\text{diag}(x^*)\int_{0}^{t}\kappa(s)\,ds + GU(t),\quad t\ge 0, \label{eq:RBCP_W} \\
	& Z(t) \ge 0\text{ for } t\ge 0, \label{eq:RBCP_ZPositive}\\
	& \text{$U$ is nondecreasing with $U(0)=0$},  \label{eq:RBCP_UNondecreasing} \\
	&\kappa(t) = A^{\prime}\zeta(t)\text{ for } t\ge 0.\label{eq:4.14}
	\end{align}
	The BCP (\ref{eq:3.16})--(\ref{eq:3.22}) and the RBCP (\ref{eq:4.9})--(\ref{eq:4.14}) are equivalent as shown by the next proposition, see Appendix \ref{app:B} for its proof.
	\begin{prop}\label{prop:1}
		Every admissible policy $(Y,\zeta)$ for the BCP (\ref{eq:3.16})--(\ref{eq:3.22}) yields an admissible policy $(Z,U,\zeta)$ for the RBCP (\ref{eq:4.9})--(\ref{eq:4.14}) and these two policies have the same cost. On the other hand, for every admissible policy $(Z,U,\zeta)$ of the RBCP, there exists an admissible policy $(Y,\zeta)$ for the BCP whose cost is equal to that of the policy $(Z,U,\zeta)$ for the RBCP.
	\end{prop}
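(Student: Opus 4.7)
\textbf{Proof proposal for Proposition~\ref{prop:1}.}

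The plan is to establish the two directions separately, using Lemma~\ref{lem:3} ($MR=GA$) to pass between the BCP's state equation (written compactly in Equation~(\ref{eq:EWF})) and the RBCP's workload equation~(\ref{eq:RBCP_W}).

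\emph{Forward direction (BCP $\Rightarrow$ RBCP).} Given an admissible $(Y,\zeta)$ for the BCP, I would define $Z=(Z_1,\ldots,Z_I)'$ via Equation~(\ref{eq:EWF}), recover $Z_0$ through the conservation identity~(\ref{eq:3.18}), set $U=AY$ by Equation~(\ref{eq:3.20}) and $\kappa = A'\zeta$ from Equation~(\ref{eq:3.22}), and declare $W=MZ$. Constraint~(\ref{eq:RBCP_ReduceW}) then holds by definition and constraints~(\ref{eq:RBCP_ZPositive}),~(\ref{eq:RBCP_UNondecreasing}),~(\ref{eq:4.14}) are inherited from~(\ref{eq:3.19}),~(\ref{eq:3.21}),~(\ref{eq:3.22}). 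For~(\ref{eq:RBCP_W}), I premultiply Equation~(\ref{eq:EWF}) by $M$, use $\chi=MB$, $U=AY$, and apply Lemma~\ref{lem:3} to replace $MRY$ by $GAY=GU$. Finally, the equality of costs follows by substituting $Z_0=-\sum_{i=1}^{I}Z_i$ from~(\ref{eq:3.18}) into the BCP objective:
\begin{equation*}
\sum_{i=0}^{I}h_iZ_i = h_0 Z_0 + \sum_{i=1}^{I}h_iZ_i = \sum_{i=1}^{I}(h_i-h_0)Z_i,
\end{equation*}
which matches the integrand in~(\ref{eq:4.9}); the $\zeta$- and $U$-cost terms are identical across the two formulations.

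\emph{Backward direction (RBCP $\Rightarrow$ BCP).} Given an admissible $(Z,U,\zeta)$ for the RBCP, let $\kappa=A'\zeta$ and
\begin{equation*}
\phi(t) = Z(t) - B(t) + \eta q\int_{0}^{t}e^{\prime}Z(s)\,ds + C\,\text{diag}(x^*)\int_{0}^{t}\kappa(s)\,ds,\quad t\ge 0.
\end{equation*}
It suffices to produce $Y$ with $RY=\phi$ and $AY=U$, since then Equation~(\ref{eq:EWF}) (equivalently~(\ref{eq:3.17})) and~(\ref{eq:3.20}) are met, while~(\ref{eq:3.18}) defines $Z_0$, and~(\ref{eq:3.19}),~(\ref{eq:3.21}),~(\ref{eq:3.22}) are inherited. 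The critical observation is that RBCP constraint~(\ref{eq:RBCP_W}) combined with~(\ref{eq:RBCP_ReduceW}) and $\chi=MB$ forces the compatibility condition $M\phi=GU$.

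\emph{Main obstacle and its resolution.} The hard step is showing that the compatibility condition $M\phi=GU$ is not only necessary but sufficient for the existence of $Y\in\mathbb{R}^J$ satisfying $RY=\phi$ and $AY=U$ pointwise in $t$. Concretely, I need to show that the range of $Y\mapsto(RY,AY)$ coincides with $\{(v,w)\in\mathbb{R}^{2I}:Mv=Gw\}$; Lemma~\ref{lem:3} gives one inclusion for free. For the reverse inclusion I would exploit the standing convention that activities $1,\ldots,I$ are local (so $s(j)=b(j)=j$): this makes the first $I$ columns of $A$ equal to the identity matrix and the first $I$ columns of $R$ equal to $\text{diag}(\mu_1^*,\ldots,\mu_I^*)$, which is invertible. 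Parametrizing $Y_i=U_i-\sum_{j>I}A_{ij}Y_j$ for $i=1,\ldots,I$ reduces the remaining problem to solving
\begin{equation*}
\bigl(R_{\cdot,\,>I}-\text{diag}(\mu^*)\,A_{\cdot,\,>I}\bigr)\,Y_{>I} \;=\; \phi-\text{diag}(\mu^*)\,U
\end{equation*}
for the non-local components of $Y$. Using the characterization of $M$ as a basis for $\mathcal{N}^{\perp}=\{Hy_B:By_B=0\}^{\perp}$ together with Lemma~\ref{lem:3}, I would verify that $M\phi=GU$ places the right-hand side exactly in the column span of the matrix on the left, so a solution $Y_{>I}$ exists (one may take, e.g., the minimum-norm solution for measurability). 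Once $Y$ is constructed, the cost identity established in the forward direction applies verbatim to conclude equality of objectives, completing the proof.
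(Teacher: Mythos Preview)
Your forward direction coincides with the paper's. For the converse, the paper does not argue directly: it simply observes that the construction in the proof of Theorem~1 of \citet{HarrisonVanMieghem1997} carries over verbatim once their driving process $X$ is replaced by $B(t)-\eta q\int_0^t e'Z(s)\,ds-C\,\mathrm{diag}(x^*)\int_0^t\kappa(s)\,ds$, and that the resulting $Y$ reproduces the same $(Z,U)$ and hence the same cost.

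Your route is genuinely different and more self-contained: you exploit the standing convention that the first $I$ activities are local (hence basic by Equation~(\ref{eq:1.14})) to reduce the existence of $Y$ with $RY=\phi$, $AY=U$ to a linear system in $Y_{>I}$. Your sketch is correct, but the step you flag as the ``main obstacle'' deserves one more sentence of justification. Writing $D=\mathrm{diag}(\lambda_1^*,\ldots,\lambda_I^*)$, the identity $G=MD$ holds because $i\in\mathcal{P}_l\Leftrightarrow i\in\mathcal{S}_l$ (local activity $i$ is basic and links buffer $i$ to server $i$), so the compatibility condition $M\phi=GU$ becomes $M(\phi-DU)=0$, i.e.\ $\phi-DU\in\ker M=\mathcal{N}$. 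On the other hand, for $j>I$ the $j$th column of $R-DA$ equals $\lambda_{s(j)}^*\bigl(e^{b(j)}-e^{s(j)}\bigr)$; restricting to basic $j\in\{I+1,\ldots,b\}$ and pairing each such $j$ with the local activity $s(j)$ shows (via Step~3 of the proof of Lemma~\ref{lem:2}) that these columns span exactly $\mathcal{N}$. Hence a solution $Y_{>I}$ exists (take nonbasic components zero and a fixed pseudoinverse on the basic block for adaptedness), and your argument closes. The payoff of your approach is that it avoids the external reference and makes transparent why the workload matrix $M$ is precisely the obstruction to lifting $(Z,U)$ back to $Y$; the paper's approach is shorter but opaque.
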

	
	Hereafter, we make the complete resource pooling assumption that corresponds to having a single resource pool in our context, see Assumption \ref{ass:4} below. \citet{HarrisonLopez1999} observes that the complete resource pooling assumption leads to a one-dimensional workload formulation, also see \citet{AtaKumar05}. Similarly, Assumption \ref{ass:4} allows us to formulate a one-dimensional workload formulation that is equivalent to the RBCP formulated in Equations (\ref{eq:4.9})--(\ref{eq:4.14}).
	
	\begin{assumption}\label{ass:4}
		All buffers communicate under the nominal processing plan, i.e., $L=1$.
	\end{assumption}
	
	 This assumption says that servers have sufficiently overlapping capabilities under the nominal processing plan; see \citet{HarrisonLopez1999} for further details. The following lemma allows us to simplify the RBCP under Assumption \ref{ass:4}, see Appendix \ref{app:B} for its proof.
	
	\begin{lem}\label{lem:4}
		Under Assumption \ref{ass:4}, we have $M=e^{\prime}$ and $G=\left(\lambda^*\right)^{\prime}$. Moreover, we have that
		\begin{align}
		M\eta q =\eta\qquad \text{and}\qquad 
		MC \text{diag}(x^*) A^{\prime} =e^{\prime}.\label{eq:5.1*}
		\end{align}
	\end{lem}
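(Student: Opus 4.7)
The claims are a routine compilation of definitions together with two substantive inputs (the canonical workload matrix from Lemma \ref{lem:2} and the relation $Ax^*=e$ from Assumption \ref{ass:3}), so the plan is simply to unpack the left-hand sides and substitute.

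First, I would establish $M = e'$ and $G = (\lambda^*)'$. Under Assumption \ref{ass:4} we have $L=1$, so by Lemma \ref{lem:2} the canonical workload matrix is the $1\times I$ row vector whose $i$th entry is $1$ if $i\in\mathcal{P}_1$ and $0$ otherwise. Since $L=1$ forces $\mathcal{P}_1=\{1,\ldots,I\}$, this gives $M=e'$. For $G$, I would note that because buffer pools and server pools are in one-to-one correspondence (a point established just before Lemma \ref{lem:2}), $L=1$ also yields a single server pool $\mathcal{S}_1$. One quick check confirms $\mathcal{S}_1=\{1,\ldots,I\}$: every local activity $j\in\{1,\ldots,I\}$ has $x_j^\ast=\min\{\lambda_j^\ast,\nu_j\}>0$ (from Equation (\ref{eq:1.14}), using $\lambda^\ast\in\mathrm{int}(\mathcal{L})$ and $\nu_j=q_j\eta>0$), so each local activity is basic and every server $k$ belongs to some server pool, which must be $\mathcal{S}_1$. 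Plugging this into the definition (\ref{eq:4.7*}) gives $G_{1k}=\lambda_k^\ast$ for every $k$, i.e., $G=(\lambda^\ast)'$.

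Next, I would verify the two identities in (\ref{eq:5.1*}). Since $M=e'$ is a row of ones and $q$ is a probability vector, $M\eta q = \eta\,e'q = \eta\sum_{i=1}^{I}q_i = \eta$. For the other identity, I would compute $MC\,\mathrm{diag}(x^*)A'$ column by column. The product $MC=e'C$ is the $1\times J$ row vector whose $j$th entry is $\sum_i C_{ij}$; since every column of $C$ contains exactly one $1$ (see Equation (\ref{eq:1.2})), this gives $MC=e'$ (of length $J$). Multiplying by $\mathrm{diag}(x^*)$ on the right yields the row vector $(x^*)'$. Finally, the $i$th entry of $(x^*)'A'$ is $\sum_{j=1}^{J}A_{ij}x_j^\ast=(Ax^\ast)_i$, and by Equation (\ref{eq:1.15}) in Assumption \ref{ass:3} this equals $1$. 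Therefore $MC\,\mathrm{diag}(x^*)A'=e'$, completing the proof.

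No step presents a real obstacle; the only subtle point is ensuring that the server pool corresponding to the single buffer pool truly exhausts all servers, which is why I would pause to verify that every local activity is basic before invoking the one-to-one correspondence.
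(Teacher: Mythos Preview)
Your proof is correct and follows essentially the same approach as the paper's. The paper's argument is slightly more terse on the server-pool point (it invokes directly that server pools partition the servers, a fact established just before Lemma~\ref{lem:2}, so a single buffer pool forces a single server pool containing all servers), whereas you add the extra verification that every local activity is basic; this is harmless and arguably more self-contained, but not needed given what the paper has already set up.
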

	Using Lemma \ref{lem:4}, the RBCP can be equivalently written as follows: Choose a policy $(Z, U,\zeta)$ that is nonanticipating with respect to $\chi$ so as to 
	\begin{align}
	&\text{minimize}\quad \limsup_{t\rightarrow \infty}\,\,\frac{1}{t}\,\,E\left[\int_{0}^{t}\left(\sum_{i=1}^I \alpha_i \zeta_i^2(s) + \sum_{i=1}^{I}(h_i-h_0)Z_i(s)\right)\,ds +c^{\prime}U(t)\right]\label{eq:5.2*}\\
	&\text{subject to} \notag\\
	&W(t) = \sum_{i=1}^{I}Z_i(t),\quad t\ge 0,\\
	&W(t) = \chi(t) -\eta\int_{0}^{t}W(s)\,ds  -\int_{0}^{t}\sum_{i=1}^{I}\zeta_i(s)\,ds + \sum_{i=1}^{I}\lambda_i^* U_i(t),\quad t\ge 0,  \label{eq:EWF_W} \\
	&Z(t) \ge 0\text{ for } t\ge 0,\\
	&U\text{ is nondecreasing with }U(0)=0,\label{eq:5.6*}
	\end{align}
	where $\chi$ is a one-dimensional Brownian motion with drift rate parameter $a=e'\gamma$ and variance parameter $\sigma^2 = e^{\prime}\Sigma e$ and starting state $\chi(0) = \sum_{i=1}^{I}B_i(0)\ge 0$.
	
	To further simplify the RBCP, we define the cost function $c$ by
	\begin{alignat}{2}
	c(x) &= \min\left\{\sum_{i=1}^I \alpha_i\zeta_i^2: e^{\prime}\zeta = x,\,\zeta\in\mathbb{R}^I\right\},\quad x\in\mathbb{R},\label{eq:5.9*}
	\end{alignat}
	and the optimal (state-dependent) drift rate function $\zeta^*$ by
	\begin{align}
	\zeta^*(x) = \text{argmin}\left\{ \sum_{i=1}^I \alpha_i\zeta_i^2: e^{\prime}\zeta = x,\,\zeta\in\mathbb{R}^I\right\},\quad x\in\mathbb{R}.\label{eq:5.11*}
	\end{align}
	Defining $\hat{\alpha}=\sum_{i=1}^I 1/\alpha_i$, the following lemma characterizes these functions---similar results are found in \citet{CelikMaglaras2008} and \citet{AtaBarjesteh2020}.
	\begin{lem}\label{lem:5}
	   We have that $c(x)=\frac{1}{\hat{\alpha}}x^2$ and $\zeta_i^\ast(x)=\frac{1}{\alpha_i\hat{\alpha}}x$ for $i=1,\ldots,I$ and $x\in\mathbb{R}$.
	\end{lem}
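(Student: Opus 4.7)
The problem defining $c(x)$ is a convex quadratic minimization over an affine subspace of $\mathbb{R}^I$, so I would solve it via Lagrange multipliers and then verify that the resulting critical point is indeed the unique global minimizer.

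First, I would form the Lagrangian
\begin{equation*}
\mathcal{L}(\zeta,\nu) = \sum_{i=1}^I \alpha_i \zeta_i^2 - \nu\left(\sum_{i=1}^I \zeta_i - x\right),
\end{equation*}
and note that the objective is strictly convex (since each $\alpha_i>0$ by the definition given immediately after Equation (\ref{eq:45})) and the feasible set $\{\zeta\in\mathbb{R}^I : e'\zeta = x\}$ is nonempty, closed and convex, so a unique global minimizer exists. The first-order stationarity conditions $\partial\mathcal{L}/\partial\zeta_i = 0$ give $2\alpha_i\zeta_i = \nu$, i.e., $\zeta_i = \nu/(2\alpha_i)$.

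Next, I would substitute this expression into the constraint $\sum_i \zeta_i = x$ to solve for $\nu$:
\begin{equation*}
\sum_{i=1}^I \frac{\nu}{2\alpha_i} = x \;\;\Longrightarrow\;\; \frac{\nu}{2}\hat{\alpha} = x \;\;\Longrightarrow\;\; \nu = \frac{2x}{\hat{\alpha}},
\end{equation*}
using the definition $\hat{\alpha}=\sum_i 1/\alpha_i$. This immediately yields $\zeta_i^\ast(x) = \nu/(2\alpha_i) = x/(\alpha_i\hat{\alpha})$, which is the stated formula.

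Finally, to obtain the value of $c(x)$, I would substitute $\zeta_i^\ast(x)$ back into the objective:
\begin{equation*}
c(x) = \sum_{i=1}^I \alpha_i \left(\frac{x}{\alpha_i\hat{\alpha}}\right)^2 = \frac{x^2}{\hat{\alpha}^2}\sum_{i=1}^I \frac{1}{\alpha_i} = \frac{x^2}{\hat{\alpha}^2}\cdot \hat{\alpha} = \frac{x^2}{\hat{\alpha}},
\end{equation*}
completing the proof. There is no real obstacle here, since strict convexity of the objective guarantees that the Lagrangian stationary point is the unique minimizer; the only thing to be careful about is to flag explicitly that $\alpha_i>0$ (to justify division by $\alpha_i$ and to ensure that a minimizer exists and is unique), which was already established when $\alpha_i$ was introduced after Equation (\ref{eq:45}).
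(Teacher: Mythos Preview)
Your proposal is correct and takes essentially the same approach as the paper: both set up the Lagrangian, read off $\zeta_i=\nu/(2\alpha_i)$ from the first-order conditions, solve for $\nu$ via the constraint, and substitute back to obtain $c(x)$. If anything, your write-up is slightly more careful in explicitly invoking $\alpha_i>0$ to justify existence and uniqueness of the minimizer.
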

	In the workload formulation, it is optimal to keep all workload in the buffer with the lowest holding cost, i.e., buffer $i^*$ where 
	\begin{align}
	i^*= \underset{i=1,\dots, I}{\text{arg\,min}}\,\,h_i,   \label{eq:LowestHoldingCostBuffIndex}
	\end{align} 
	with holding cost $h = h_{i^*} - h_0>0$.
	Moreover, the system manager will only idle the server that is cheapest to idle, i.e., server $k^*$ where
	\begin{align}
	k^*=  \underset{i=1,\dots, I}{\text{arg\,min}} \,\,\frac{c_i}{\lambda_i^*},\label{eq:76}
	\end{align}
	with idling cost $r = c_{k^*}/\lambda^*_{k^*}$.
	
	The workload formulation can now be stated as follows: Choose a policy $\theta:[0,\infty)\rightarrow \mathbb{R}$
	that is nonanticipating with respect to $\chi$ so as to
	\begin{align}
	&\text{minimize}\quad\limsup_{t\rightarrow \infty}\,\,\frac{1}{t}\,\,E\left[\int_{0}^{t}\left[c\left(\theta(s)\right) + h\,W(s)\right]\,ds +r L(t)\right]\label{eq:5.12*}\\
	&\text{subject to} \notag\\
	&W(t) = \chi(t) -\eta\int_{0}^{t}W(s)\,ds  -\int_{0}^{t}\theta(s)\,ds + L(t),\quad t\ge 0,\label{eq:5.13}\\
	& W(t)\ge 0\text{ for } t\ge 0,\label{eq:5.14}\\
	&  L \text{ is nondecreasing with }L(0)=0,\label{eq:5.15*}
	\end{align}
 The RBCP (\ref{eq:4.9})--(\ref{eq:4.14}) and the EWF (\ref{eq:5.12*})--(\ref{eq:5.15*}) are equivalent as proved by the following proposition, see Appendix \ref{app:B} for its proof.
	\begin{prop}\label{prop:2}
		Every admissible policy $\theta$ for the EWF (\ref{eq:5.12*})--(\ref{eq:5.15*}) yields an admissible policy $(Z,U,\zeta)$ for the RBCP (\ref{eq:5.2*})--(\ref{eq:5.6*}) and these two policies have the same cost. On the other hand, for every admissible policy $(Z,U,\zeta)$ of the RBCP, there exists an admissible policy $\theta$ for the EWF whose cost is less than or equal to that of the policy $(Z,U,\zeta)$ for the RBCP.
	\end{prop}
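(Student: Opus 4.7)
The plan is to establish both directions by leveraging Lemma \ref{lem:5} together with the pointwise choices $i^*$ (the buffer with the lowest holding cost) and $k^*$ (the server cheapest to idle). These choices allow us to lift a one-dimensional EWF policy into a full RBCP policy with matching cost, and conversely to collapse any RBCP policy into an EWF policy whose cost dominates it. Throughout, I rely on Lemma \ref{lem:4} to identify $M=e'$, $G=(\lambda^*)'$, $M\eta q=\eta$, and $MC\,\mathrm{diag}(x^*)A'=e'$.

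For the forward direction, given an admissible EWF policy $\theta$ with induced workload $W$ and regulator $L$, I would define $\zeta_i(t) = \theta(t)/(\alpha_i \hat{\alpha})$ (the minimizer in Lemma \ref{lem:5}), then set $Z_{i^*}(t)=W(t)$ with $Z_i(t)=0$ for $i\neq i^*$, and $U_{k^*}(t)=L(t)/\lambda_{k^*}^*$ with $U_i(t)=0$ for $i\neq k^*$. Admissibility of $(Z,U,\zeta)$ for the RBCP is immediate: $Z(t)\geq 0$ follows from $W(t)\geq 0$, $U$ is nondecreasing with $U(0)=0$, and nonanticipation is inherited from $\theta$ and $\chi$. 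To verify the RBCP workload equation (\ref{eq:RBCP_W}), I would substitute the assignments and use Lemma \ref{lem:4} together with $\kappa=A'\zeta$ from (\ref{eq:4.14}): one obtains $MC\,\mathrm{diag}(x^*)\int_0^t \kappa(s)\,ds = \int_0^t \theta(s)\,ds$, $M\eta q\int_0^t e'Z(s)\,ds = \eta\int_0^t W(s)\,ds$, and $GU(t) = L(t)$, which together reproduce the EWF dynamics (\ref{eq:5.13}). The cost matches exactly because, by Lemma \ref{lem:5}, $\sum_i \alpha_i \zeta_i^2(t) = c(\theta(t))$, while $\sum_i (h_i-h_0) Z_i(t) = h W(t)$ and $c'U(t) = (c_{k^*}/\lambda_{k^*}^*) L(t) = r L(t)$ by construction.

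For the backward direction, given an admissible RBCP policy $(Z,U,\zeta)$, I would set $\theta(t)=\sum_i \zeta_i(t)$, $W(t)=\sum_i Z_i(t)$, and $L(t)=\sum_i \lambda_i^* U_i(t)$. Then $W(t)\geq 0$ is inherited from $Z\geq 0$, and $L$ is nondecreasing with $L(0)=0$; left-multiplying (\ref{eq:RBCP_W}) by $M=e'$ and invoking Lemma \ref{lem:4} again yields (\ref{eq:5.13}). For the cost, Lemma \ref{lem:5} gives $c(\theta(t)) \leq \sum_i \alpha_i \zeta_i^2(t)$; the definition of $i^*$ together with $Z_i\geq 0$ gives $h W(t) \leq \sum_i (h_i-h_0) Z_i(t)$; and the definition of $k^*$ together with $U_i\geq 0$ gives $r L(t) = \sum_i (c_{k^*}/\lambda_{k^*}^*)\lambda_i^* U_i(t) \leq \sum_i c_i U_i(t) = c'U(t)$. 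Integrating, taking expectations, dividing by $t$, and applying $\limsup$ yields the desired cost inequality.

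The argument is essentially a verification once the right assignments are identified, so I do not anticipate a deep obstacle. The point requiring the most care is confirming that (\ref{eq:RBCP_W}) genuinely collapses to (\ref{eq:5.13}) under the chosen assignments; here Lemma \ref{lem:4} and the identity $\kappa=A'\zeta$ do all the work, and the complete resource pooling assumption is what makes the workload one-dimensional so that $L$ is a scalar process collecting contributions from every server.
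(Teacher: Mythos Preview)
Your proposal is correct and follows essentially the same approach as the paper's proof: lift an EWF policy to the RBCP by placing all workload in buffer $i^*$, all idleness on server $k^*$, and choosing $\zeta$ via the minimizer from Lemma~\ref{lem:5}; collapse an RBCP policy to the EWF by summing. Your treatment is in fact slightly more careful than the paper's, which writes $U_{k^*}\equiv L$ rather than your $U_{k^*}=L/\lambda_{k^*}^*$; your scaling is the one needed so that both $GU(t)=L(t)$ in the dynamics and $c'U(t)=rL(t)$ in the cost hold simultaneously.
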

In what follows, we add two additional constraints to the equivalent workload formulation. First, we require that \begin{align}
    \int_0^{\infty} \mathbf{1}_{\left\{W(t)>0\right\}}\,dL(t) &= 0, \label{eq:ProcessL}
\end{align} which requires that the process $L$ can increase only when $W=0$. That is, the control policy must be work conserving. 
 We include this restriction because its optimality is intuitive from the cost structure, i.e., there are both holding and idleness costs, and that the workload process is one dimensional. 
Second, we impose the following regularity condition: \begin{align*}
	\lim_{t\rightarrow \infty}\frac{ E\left[W(t)\right]}{t}=0.
	\end{align*} To repeat, we further require a policy $\theta$ to satisfy these conditions to be admissible.

	\section{Solving the Equivalent Workload Formulation}\label{sec:6}
	This section solves the EWF (\ref{eq:5.12*})--(\ref{eq:5.15*}). In order to minimize technical complexity, we restrict attention to stationary Markov policies. That is, the drift chosen at time $t$ will be a function of the current workload only, and so we write it as $\theta\left(W(t)\right)$. To facilitate the analysis, we next consider the Bellman equation for the workload formulation 
	which is the following second-order nonlinear differential equation: Find a function $f\in\mathcal{C}^2[0,\infty)$ and a constant $\beta\in\mathbb{R}$ satisfying
	\begin{align}
	\beta &= \min_{x\in\mathbb{R}}\left\{\frac{1}{2}\sigma^2 f^{\prime\prime}(w)-\eta wf^{\prime}(w) - xf^{\prime}(w) +af^{\prime}(w) + c(x) + hw\right\}\notag\\
	&= \min_{x\in\mathbb{R}}\left\{\frac{1}{ \hat{\alpha}}x^2 - xf^{\prime}(w)\right\} + \frac{1}{2}\sigma^2f^{\prime\prime}(w)-\eta wf^{\prime}(w) +af^{\prime}(w) +hw,\quad w\ge 0,\label{eq:6.1*}
	\end{align}  
	subject to the boundary conditions
	\begin{align}
	f^{\prime}(0) = -r\quad \text{and}\quad f^{\prime}\text{ is increasing with }\lim\limits_{w\rightarrow\infty}f^{\prime}(w)=\frac{h}{\eta}.\label{eq:6.2*}
	\end{align}
	The optimization problem on the right hand side of Equation (\ref{eq:6.1*}) is convex. Therefore, its solution is easily seen to be 
	\begin{align}
	x^* = \frac{\hat{\alpha}}{2}f^{\prime}(w).\label{eq:6.3*}
	\end{align}
	The Bellman equation can then be simplified as follows: Find a function $f\in\mathcal{C}^2[0,\infty)$ and a constant $\beta\in\mathbb{R}$ satisfying
	\begin{align}
	\beta = -\frac{\hat{\alpha}}{4}\left[f^{\prime}(y)\right]^2 + \frac{1}{2}\sigma^2f^{\prime\prime}(y)-\eta yf^{\prime}(y)+af^{\prime}(y) +hy,\quad  y\ge 0,
	\end{align}
	subject to the boundary conditions
	\begin{align}
	f^{\prime}(0) = -r\quad \text{and}\quad f^{\prime}\text{ is increasing with }\lim\limits_{w\rightarrow\infty}f^{\prime}(w)=\frac{h}{\eta}.  \label{eq:fBoundary}
	\end{align}
	Setting $v= f^{\prime}$, the Bellman equation can be written as follows: find a function $v\in \mathcal{C}^{1}[0,\infty)$ and a constant $\beta \in \mathbb{R}$ satisfying
	\begin{align}
	\beta = -\frac{\hat{\alpha}}{4}v^2(y) + \frac{1}{2}\sigma^2v^{\prime}(y)-\eta yv(y)+av(y)+hy,\quad y\ge 0,\label{eq:6.5*}
	\end{align}
	subject to the boundary conditions
	\begin{align}
	v(0) = -r\quad\text{and}\quad v\text{ is increasing with }\lim\limits_{y\rightarrow\infty}v(y) =\frac{h}{\eta}.\label{eq:6.6*}
	\end{align}
	This expresses the Bellman equation as a first-order differential equation. The following theorem provides its solution. Its proof is given at the end of Section \ref{sec:7}.
	\begin{thm}\label{thm:1}
		The Bellman equation (\ref{eq:6.5*})--(\ref{eq:6.6*}) has a solution $\left(\beta^*, v\right)$ with $\beta^*>0$.
	\end{thm}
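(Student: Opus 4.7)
The approach is a shooting method in the unknown eigenvalue $\beta$. For each candidate $\beta \in \mathbb{R}$, I would let $v(\cdot;\beta)$ denote the maximal $C^1$ solution of the initial value problem obtained from (\ref{eq:6.5*}) with $v(0) = -r$, defined on a maximal interval $[0, y_\beta)$. Since the right-hand side of the ODE is smooth in $(y, v)$ and strictly increasing in $\beta$, standard ODE theory provides local existence, uniqueness, smooth dependence on $\beta$, and a comparison principle: $\beta_1 < \beta_2$ implies $v(y; \beta_1) < v(y; \beta_2)$ wherever both solutions are defined. The goal is to find the unique $\beta^*$ for which $v(\cdot;\beta^*)$ is globally defined, monotone, and satisfies $v(y) \to h/\eta$ as $y \to \infty$.

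The key observation for identifying $\beta^*$ is that whenever $v(y_0) = h/\eta$, substituting back into (\ref{eq:6.5*}) yields $v'(y_0) = (2/\sigma^2)[\beta + \hat{\alpha}h^2/(4\eta^2) - ah/\eta]$, a value independent of $y_0$. Hence the line $\{v = h/\eta\}$ can only be crossed upward once $\beta$ exceeds the threshold $\beta_c := ah/\eta - \hat{\alpha}h^2/(4\eta^2)$, and once $v$ climbs above this level the quadratic term $(\hat{\alpha}/4)v^2$ drives $v$ to $+\infty$ in finite time. I would partition $\mathbb{R}$ into $S^+ = \{\beta : v(\cdot;\beta) \text{ eventually exceeds } h/\eta\}$ and $S^- = \mathbb{R} \setminus S^+$; by monotone dependence both are order intervals, with $S^+$ containing all sufficiently large $\beta$ and $S^-$ containing all sufficiently negative $\beta$ (for which $v$ is dragged down and stays below $h/\eta$). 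Setting $\beta^* := \inf S^+$, continuity in $\beta$ produces a globally defined $v(\cdot;\beta^*)$ with $v \leq h/\eta$ on $[0, \infty)$. The asymptotic boundary condition $v(y;\beta^*) \to h/\eta$ then follows by contradiction: otherwise one could perturb $\beta$ slightly upward while keeping $v < h/\eta$ on all of $[0,\infty)$, contradicting $\beta^* = \inf S^+$. Monotonicity $v' \geq 0$ is obtained by ruling out interior local maxima through a comparison argument exploiting the minimality of $\beta^*$.

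The hardest step will be showing $\beta^* > 0$. The cleanest justification is probabilistic: $\beta^*$ represents the long-run average cost of the reflected diffusion $W$ controlled by the feedback drift $\theta^*(w) = (\hat{\alpha}/2)v(w;\beta^*)$, which must be strictly positive because the holding rate $h > 0$, idleness rate $r > 0$, and nondegenerate noise $\sigma^2 > 0$ jointly preclude zero cost in ergodic equilibrium. Formally, integrating the Bellman equation against the invariant density of the controlled workload process yields $\beta^* = E[c(\theta^*(W_\infty)) + h W_\infty] + r \cdot E[dL/dt]$, each summand nonnegative and at least one strictly positive. The main technical obstacle is justifying this integration, which requires establishing existence and integrability of the invariant distribution of the controlled diffusion; the strong mean-reversion $-\eta w$ with $\eta > 0$ together with the boundedness $v \in [-r, h/\eta)$ make this tractable. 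A purely analytical backup is to rule out $\beta \leq 0$ directly by showing that any such trajectory $v(\cdot;\beta)$ cannot simultaneously satisfy $v(0) = -r$ and remain bounded by $h/\eta$ for all $y \geq 0$, using the sign structure of the forcing term and the threshold relation $\beta_c$ derived above.
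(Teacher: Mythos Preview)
Your shooting-method strategy is essentially the paper's approach: define the IVP for each $\beta$, establish monotone and continuous dependence on $\beta$ (the paper's Lemmas~\ref{lem:16}--\ref{lem:17}), partition the parameter axis into a ``good'' and ``bad'' set, and take $\beta^*$ as the infimum of the good set. The paper's partition is phrased as $\mathcal{I}$ (solutions that are nondecreasing) versus $\mathcal{D}$ (solutions that eventually decrease), which by Lemmas~\ref{lem:14}--\ref{lem:vLimit} coincides with your $S^-/S^+$ split at the level $h/\eta$.

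Two points of divergence are worth noting. First, your perturbation argument for $v(y;\beta^*)\to h/\eta$ has a gap: if $\limsup_y v(y;\beta^*)<h/\eta$ fails but $v(y;\beta^*)<h/\eta$ still holds (i.e.\ the supremum is $h/\eta$ but is not attained), a small upward perturbation in $\beta$ need not keep the trajectory below $h/\eta$. The paper avoids this by arguing directly from the ODE (Lemma~\ref{lem:24}): if $v_{\beta^*}(y)\to L<h/\eta$, the term $\eta y(v-h/\eta)$ forces $v'(y)\to-\infty$, contradicting monotonicity. Second, for $\beta^*>0$ the paper gives a purely analytical proof (Lemmas~\ref{lem:19} and \ref{lem:21}), showing explicitly that $v_0$ (or $v_{\underline\beta_2}$ in the case $a\le-\hat\alpha r/4$) diverges to $-\infty$ and hence lies in $\mathcal{D}$; this requires the case split of Assumption~\ref{ass:ParamAssumption} to control the sign of $v'(0)$. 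Your probabilistic route---integrating the Bellman equation against the invariant density of the reflected diffusion to identify $\beta^*$ as an ergodic cost---is genuinely different and more conceptual. It is valid and not circular, since by that stage you already have $v\in[-r,h/\eta]$ and the linear mean-reversion $-\eta w$ guarantees an invariant distribution with finite mean; it does, however, require you to carry out the It\^o computation and the local-time identity $E_\pi[\mathcal{L}^{\theta^*}f]=r\ell$ carefully, which is essentially the content of Proposition~\ref{prop:3} and the proof of Theorem~\ref{thm:2}. The paper's analytical argument is more self-contained but needs the case analysis; yours trades that for some ergodic-theory bookkeeping.
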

	With $\beta^*>0$ and $v$ given by Theorem \ref{thm:1}, we define 
	\begin{align*}
	f(y) = \int_{0}^{y}v(x)\,dx,\quad y\ge 0.
	\end{align*} 
	The next result is immediate from Theorem \ref{thm:1} and provides a solution to the original Bellman equation:
	\begin{cor}
		The pair $(\beta^*, f)$ solves the Bellman equation (\ref{eq:6.1*})--(\ref{eq:6.2*}). 
	\end{cor}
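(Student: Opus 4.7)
The plan is to verify by direct substitution that the pair $(\beta^*, f)$ constructed from Theorem \ref{thm:1} satisfies both the differential equation (\ref{eq:6.1*}) and the boundary conditions (\ref{eq:6.2*}). Since Theorem \ref{thm:1} has already done the analytic heavy lifting (establishing existence of $v$ and the sign of $\beta^*$), this corollary is essentially a bookkeeping step, and I do not anticipate any real obstacle beyond carefully undoing the reduction that led from (\ref{eq:6.1*})--(\ref{eq:6.2*}) to (\ref{eq:6.5*})--(\ref{eq:6.6*}).

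First, I would establish regularity. Since $v\in \mathcal{C}^1[0,\infty)$ by Theorem \ref{thm:1}, the function $f(y)=\int_0^y v(x)\,dx$ lies in $\mathcal{C}^2[0,\infty)$, with $f'(y)=v(y)$ and $f''(y)=v'(y)$ for all $y\ge 0$. Next, I would verify the boundary conditions in (\ref{eq:6.2*}): the identity $f'=v$ combined with $v(0)=-r$ gives $f'(0)=-r$, and the monotonicity of $v$ together with $\lim_{y\to\infty}v(y)=h/\eta$ from (\ref{eq:6.6*}) immediately yield the corresponding properties of $f'$.

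It then remains to verify the Hamilton--Jacobi--Bellman equation (\ref{eq:6.1*}). For fixed $w\ge 0$, the inner problem
\begin{equation*}
\min_{x\in\mathbb{R}}\Bigl\{\tfrac{1}{\hat{\alpha}}x^2 - x f'(w)\Bigr\}
\end{equation*}
is strictly convex in $x$; its unique minimizer is $x^*=\tfrac{\hat{\alpha}}{2}f'(w)$, as recorded in (\ref{eq:6.3*}), with minimum value $-\tfrac{\hat{\alpha}}{4}[f'(w)]^2$. Substituting this minimum value, together with the identifications $f'(w)=v(w)$ and $f''(w)=v'(w)$, into the right-hand side of (\ref{eq:6.1*}) produces exactly
\begin{equation*}
-\tfrac{\hat{\alpha}}{4}v^2(w) + \tfrac{1}{2}\sigma^2 v'(w) - \eta w\, v(w) + a\, v(w) + hw,
\end{equation*}
which equals $\beta^*$ by (\ref{eq:6.5*}). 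Hence $(\beta^*, f)$ satisfies (\ref{eq:6.1*}) for every $w\ge 0$, completing the verification.

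The only subtle point I would be careful about is to record that the reduction from (\ref{eq:6.1*}) to (\ref{eq:6.5*}) is genuinely an equivalence: having solved the first-order equation (\ref{eq:6.5*}) for $v$, the convexity of the inner objective guarantees that the minimum in (\ref{eq:6.1*}) is attained uniquely at $x^*=\tfrac{\hat{\alpha}}{2}v(w)$, so reading the reduction backwards is legitimate. Beyond this observation, the argument is purely algebraic substitution and requires no new estimates.
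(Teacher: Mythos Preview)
Your proposal is correct and matches the paper's approach: the paper states the corollary as ``immediate from Theorem \ref{thm:1}'' without writing out a proof, and your direct-substitution argument is precisely the elaboration of that immediacy, reversing the reduction from (\ref{eq:6.1*})--(\ref{eq:6.2*}) to (\ref{eq:6.5*})--(\ref{eq:6.6*}) that the paper carried out before stating the theorem.
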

	Define the following candidate policy $\theta^*:[0,\infty)\rightarrow \mathbb{R}$ by
	\begin{align}
	\theta^*(w) = \frac{\hat{\alpha}}{2}v(w),\quad w\ge 0.\label{eq:6.7}
	\end{align}
The following proposition facilitates the proof of our main result, Theorem \ref{thm:2}; see Appendix \ref{app:B} for its proof.
	\begin{prop}\label{prop:3}
		The candidate policy $\theta^*$ is admissible for the equivalent  workload formulation. That is, letting $W^*=\left\{W^*(t),\,t\ge 0\right\}$ denote the workload process under the candidate policy $\theta^*$, we have $$\lim_{t\rightarrow \infty}\frac{ E\left[W^*(t)\right]}{t}=0.$$
	\end{prop}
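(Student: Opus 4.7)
The plan is to establish a Lyapunov bound on the second moment of $W^*$, which will imply the desired regularity immediately by Jensen's inequality. The key observation is that under $\theta^*$, the reflected workload process satisfies a stochastic differential equation whose drift is strongly mean-reverting for large $w$ because of the $-\eta w$ term inherited from the infinite-server node, while the control $\theta^*$ contributes only a bounded perturbation.

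First, I would verify that $\theta^*$ is bounded. By Theorem 1, $v$ is continuously differentiable on $[0,\infty)$, increasing, with $v(0)=-r$ and $\lim_{w\to\infty}v(w)=h/\eta$. Hence $-r\le v(w)\le h/\eta$ for all $w\ge 0$, so $|\theta^*(w)|=\tfrac{\hat\alpha}{2}|v(w)|\le M$ for some constant $M>0$. Because $\theta^*$ is continuous and bounded and the coefficient $-\eta w$ is affine, the reflected SDE
\begin{equation*}
dW^*(t)=\bigl[a-\eta W^*(t)-\theta^*(W^*(t))\bigr]\,dt+\sigma\, dB(t)+dL^*(t),\qquad W^*(t)\ge 0,
\end{equation*}
with $L^*$ increasing only when $W^*(t)=0$, admits a unique strong solution by standard results on reflected diffusions.

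Next, I would apply Itô's formula to $g(w)=w^2$. Using $W^*(t)\,dL^*(t)=0$, this gives
\begin{equation*}
d\bigl(W^*(t)\bigr)^2=\bigl\{2W^*(t)[a-\eta W^*(t)-\theta^*(W^*(t))]+\sigma^2\bigr\}\,dt+2\sigma W^*(t)\,dB(t).
\end{equation*}
Using $|\theta^*|\le M$ and the elementary inequality $2(a+M)w\le \eta w^2+(a+M)^2/\eta$, the drift term is bounded above by $-\eta (W^*(t))^2 + K$ where $K=(a+M)^2/\eta+\sigma^2$. Introducing the localizing stopping times $\tau_n=\inf\{t\ge 0:W^*(t)\ge n\}$ to kill the martingale term in expectation and then letting $n\to\infty$ by monotone/dominated convergence, I would obtain
\begin{equation*}
\frac{d}{dt}\,E\bigl[(W^*(t))^2\bigr]\le -\eta\, E\bigl[(W^*(t))^2\bigr]+K.
\end{equation*}
Gronwall's lemma then yields $E[(W^*(t))^2]\le E[(W^*(0))^2]e^{-\eta t}+(K/\eta)(1-e^{-\eta t})$, which is bounded uniformly in $t$. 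Since $E[W^*(t)]\le\sqrt{E[(W^*(t))^2]}$ by Jensen, $E[W^*(t)]$ is uniformly bounded, and hence $E[W^*(t)]/t\to 0$ as $t\to\infty$, establishing admissibility of $\theta^*$.

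The hard part is the localization step that justifies passing the expectation through Itô's formula and then through the Gronwall inequality; once the boundedness of $\theta^*$ is extracted from the boundary conditions \eqref{eq:6.6*}, the rest is routine. A minor subtlety is that the argument implicitly needs $E[(W^*(0))^2]<\infty$, which is harmless because the initial workload is the deterministic starting state $\chi(0)=MB(0)$.
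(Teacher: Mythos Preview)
Your proof is correct and takes a genuinely different route from the paper. The paper argues via the stationary distribution of the reflected diffusion: it writes down the explicit stationary density $\varphi$ (using the formula from Browne--Whitt/Mandl), verifies that the normalizing integral is finite by exploiting the bound $-r\le v\le h/\eta$, deduces that the stationary process $\tilde W$ has a finite first moment, and then uses a coupling argument (comparing $W^*$ with a shifted stationary copy $\tilde W^*(t)=W^*(0)+\tilde W(t)$ up to the first crossing time) to conclude that $E[W^*(t)]\le W^*(0)+E[\tilde W(0)]<\infty$ uniformly in $t$.

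Your Lyapunov approach is more direct and arguably more robust: you use only the boundedness of $\theta^*$ (extracted exactly as the paper does, from the boundary conditions on $v$) together with the linear mean reversion $-\eta w$, apply It\^o to $w^2$, and get the differential inequality $\tfrac{d}{dt}E[(W^*(t))^2]\le -\eta\,E[(W^*(t))^2]+K$. This avoids any appeal to the explicit stationary density or to coupling, and yields the stronger conclusion that the \emph{second} moment is uniformly bounded. One minor remark: you do not actually need the full Gronwall step---once the localization gives $E[(W^*(t))^2]\le E[(W^*(0))^2]+Kt$, Jensen already yields $E[W^*(t)]/t\le\sqrt{E[(W^*(0))^2]+Kt}/t\to 0$, which suffices for admissibility. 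The paper's approach, on the other hand, has the advantage of exhibiting the stationary law explicitly, which could be useful elsewhere (e.g., for computing the optimal long-run cost directly).
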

	The following result establishes that the candidate policy is optimal:
	\begin{thm}\label{thm:2}
		The candidate policy $\theta^*$ is optimal for the equivalent workload formulation (\ref{eq:5.12*})--(\ref{eq:5.15*}), and its long-run average cost is $\beta^*$.
	\end{thm}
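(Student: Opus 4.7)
The plan is to carry out a standard verification argument based on the Bellman equation, applying It\^o's formula to $f(W(t))$ for an arbitrary admissible policy $\theta$ with corresponding workload process $W$. Since $W$ satisfies the SDE $dW(t)=a\,dt+\sigma\,dB(t)-\eta W(t)\,dt-\theta(t)\,dt+dL(t)$, and $f\in\mathcal{C}^2[0,\infty)$, It\^o's formula gives
\begin{align*}
f(W(t))-f(W(0))&=\int_0^t f'(W(s))\bigl[a-\eta W(s)-\theta(s)\bigr]\,ds+\tfrac{1}{2}\sigma^2\int_0^t f''(W(s))\,ds\\
&\quad +\sigma\int_0^t f'(W(s))\,dB(s)+\int_0^t f'(W(s))\,dL(s).
\end{align*}
The work-conservation constraint (\ref{eq:ProcessL}) together with the boundary condition $f'(0)=-r$ collapses the last integral to $-rL(t)$. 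The Bellman equation (\ref{eq:6.5*}) can be rewritten (using $v=f'$ and $\min_{x}\{c(x)-xf'(w)\}=-\tfrac{\hat{\alpha}}{4}[f'(w)]^2$) as
\[
\tfrac{1}{2}\sigma^2 f''(w)+(a-\eta w)f'(w)=\beta^*+\tfrac{\hat{\alpha}}{4}[f'(w)]^2-hw,\qquad w\ge 0.
\]
Substituting this into the It\^o expansion yields, after rearrangement,
\[
\beta^* t+\int_0^t\!\Bigl[\tfrac{\hat{\alpha}}{4}[f'(W(s))]^2-\theta(s)f'(W(s))\Bigr]ds=f(W(t))-f(W(0))+\int_0^t hW(s)\,ds+rL(t)-\sigma\!\int_0^t\!f'(W(s))\,dB(s).
\]

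Next I would invoke the pointwise inequality $c(\theta)-\theta f'(w)\ge -\tfrac{\hat{\alpha}}{4}[f'(w)]^2$, which is an immediate consequence of the fact that $x^*(w)=\tfrac{\hat{\alpha}}{2}f'(w)=\theta^*(w)$ is the unique minimizer of the quadratic $x\mapsto c(x)-xf'(w)$, and which holds with equality precisely when $\theta(s)=\theta^*(W(s))$. This upgrades the display above to
\[
\beta^* t \le f(W(t))-f(W(0))+\int_0^t\bigl[c(\theta(s))+hW(s)\bigr]ds+rL(t)-\sigma\!\int_0^t\!f'(W(s))\,dB(s).
\]
Because $f'$ is monotone with $f'(0)=-r$ and $\lim_{w\to\infty}f'(w)=h/\eta$, we have $f'\in[-r,h/\eta]$, so the stochastic integral is a true martingale with zero expectation. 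Taking expectations, dividing by $t$, and letting $t\to\infty$ gives
\[
\beta^*\le \limsup_{t\to\infty}\frac{E[f(W(t))]}{t}+\limsup_{t\to\infty}\frac{1}{t}E\!\left[\int_0^t\bigl(c(\theta(s))+hW(s)\bigr)ds+rL(t)\right].
\]
The boundedness of $f'$ implies $|f(w)|\le\max(r,h/\eta)\,w$, so by the admissibility requirement $\lim_{t\to\infty}E[W(t)]/t=0$ the first $\limsup$ vanishes, establishing that $\beta^*$ is a lower bound on the long-run average cost of every admissible policy.

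Finally, to see that the candidate policy $\theta^*$ attains $\beta^*$, I would repeat the same calculation under $\theta=\theta^*$. Every inequality in the derivation above becomes an equality, and Proposition \ref{prop:3} provides the admissibility condition $\lim_{t\to\infty}E[W^*(t)]/t=0$ needed to eliminate the $E[f(W^*(t))]/t$ term, yielding that the long-run average cost under $\theta^*$ equals $\beta^*$ exactly. The main technical obstacles I anticipate are (i) verifying that the stochastic integral is a genuine martingale---handled cleanly by the boundedness of $f'$---and (ii) controlling $E[f(W(t))]/t$ uniformly in the admissible class, which is where the at-most-linear-growth of $f$ (implied by the boundary condition $f'\to h/\eta$) and the admissibility regularity condition on $E[W(t)]/t$ work in concert; these are precisely the features built into Theorem \ref{thm:1} and the definition of admissibility, so the verification should close without further estimates.
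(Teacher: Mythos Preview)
Your proposal is correct and follows essentially the same verification argument as the paper: apply It\^o's formula to $f(W(t))$, use the work-conservation constraint and $f'(0)=-r$ to reduce the $dL$-integral to $-rL(t)$, invoke the Bellman inequality (with equality at $\theta^*$), kill the stochastic integral via the boundedness of $f'$, and eliminate $E[f(W(t))]/t$ using the linear growth of $f$ together with the admissibility condition $E[W(t)]/t\to 0$. The paper packages the last two points into a separate auxiliary lemma (Lemma~\ref{lem:6}), but the content is identical to what you outline inline.
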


	Next, we state an auxiliary lemma used in the proof of Theorem \ref{thm:2}.

	\begin{lem}\label{lem:6}
		Let $W$ be the workload process defined by (\ref{eq:5.13})--(\ref{eq:5.15*}) under an arbitrary admissible policy. Then the following hold:
		\begin{enumerate}
			\setlength{\itemsep}{0.5em}
			\item[(i)] $E{\displaystyle \int_{0}^{t} }f^{\prime}(W(s))\,d\chi(s) = 0$,\quad $t\ge 0$,
			\item[(ii)] $\underset{t\rightarrow \infty}{\limsup}\,\,\,{\displaystyle\frac{E\left[f\left(W(t)\right)\right]}{t}} = 0$.
		\end{enumerate}
	\end{lem}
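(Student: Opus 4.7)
The plan is to reduce both statements to the uniform boundedness of $v=f'$ on $[0,\infty)$, which is immediate from the boundary conditions delivered by Theorem \ref{thm:1}. Concretely, (\ref{eq:6.6*}) says $v(0)=-r$, $v$ is increasing, and $v(y)\to h/\eta$ as $y\to\infty$, so $-r\leq v(y)\leq h/\eta$ for every $y\geq 0$. Setting $K:=\max\{r,\,h/\eta\}$, we obtain $\|f'\|_\infty\leq K$ and, by integrating, $|f(y)|=\bigl|\int_{0}^{y}v(x)\,dx\bigr|\leq K y$ for all $y\geq 0$.

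For part (i), I would read $\int_{0}^{t}f'(W(s))\,d\chi(s)$ as the It\^o integral against the Brownian (martingale) increment of $\chi$, which is the way the quantity naturally arises when one applies It\^o's formula to $f(W)$ and uses the Bellman equation to absorb the drift term $a f'(W)$ into the other deterministic contributions. Since $W$ is nonnegative-valued and $f'$ is bounded on $[0,\infty)$, the integrand $f'(W(\cdot))$ is a uniformly bounded adapted process. The It\^o integral of such a process against Brownian motion is therefore a square-integrable martingale starting from zero, and in particular has mean zero at every finite $t$.

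For part (ii), I would combine the linear-growth bound for $f$ with the admissibility condition imposed on $\theta$. The bound gives
\begin{equation*}
\bigl|E[f(W(t))]\bigr|\leq E\bigl|f(W(t))\bigr|\leq K\,E[W(t)],
\end{equation*}
so $|E[f(W(t))]|/t\leq K\,E[W(t)]/t$. The admissibility requirement introduced at the end of Section \ref{sec:5} forces $\lim_{t\to\infty}E[W(t)]/t=0$, so the right-hand side vanishes and therefore $\limsup_{t\to\infty}E[f(W(t))]/t=0$, as required.

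The only nontrivial point is confirming that the solution $v$ produced by Theorem \ref{thm:1} satisfies the monotonicity and asymptotic conditions in (\ref{eq:6.6*}) globally on $[0,\infty)$ (so that $v$, and hence $f'$, is genuinely bounded with the explicit constants above). Once this boundedness is in hand, both parts of the lemma follow immediately from the martingale property of the It\^o integral and the admissibility condition, with no further estimation needed.
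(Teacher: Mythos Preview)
Your proposal is correct and follows essentially the same approach as the paper: both arguments hinge on the bound $f'(w)\in[-r,h/\eta]$ from (\ref{eq:6.6*}), use square-integrability of the bounded integrand to conclude the stochastic integral is a true martingale for part (i), and combine the linear-growth bound $|f(y)|\le Cy$ with the admissibility condition $E[W(t)]/t\to 0$ for part (ii). Your final hedge about verifying the global monotonicity and asymptotics of $v$ is unnecessary, since these are exactly what Theorem \ref{thm:1} delivers as part of solving (\ref{eq:6.5*})--(\ref{eq:6.6*}); and your explicit remark that $d\chi$ should be read as the martingale increment (with the drift $a$ absorbed into the generator) is a useful clarification that the paper leaves implicit.
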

	\begin{proof}
		By Proposition 4.7 in \citet{Harrison2013}, to prove part (i) it suffices to show that
		\begin{align*}
		E\int_{0}^{t}\left[f^{\prime}\left(W(s)\right)\right]^2\,ds<\infty \quad\text{for each}\quad t\ge 0.
		\end{align*}
		Because $f^{\prime}(w) \in [-r, h/\eta]$ for all $w\ge 0$ by Equation (\ref{eq:6.2*}) and because $W(t)\ge 0$ for all $t\ge 0$ by Equation (\ref{eq:5.14}), it follows that
		\begin{align*}
		E\int_{0}^{t}\left[f^{\prime}\left(W(s)\right)\right]^2\,ds\le t\left(r+\frac{h}{\eta}\right)^2<\infty,\quad \text{for}\quad t\ge 0,
		\end{align*}
		proving part (i). 
		
		In order to prove part (ii), note that it suffices to show that 
		\begin{align*}
		\limsup_{t\rightarrow \infty}\frac{\left\vert E\left[f\left(W(t)\right)\right]\right\vert}{t}= 0.
		\end{align*}
		We also note that 
		\begin{align*}
		\left\vert E\left[f\left(W(t)\right)\right]\right\vert & \le E\left\vert f\left(W(t)\right)\right\vert = E\left\vert \int_{0}^{W(t)}f^{\prime}(s)\,ds\right\vert \le E\int_{0}^{W(t)}\left\vert f^{\prime}(s)\right\vert\,ds \le \left(r+\frac{h}{\eta}\right)E\left[W(t)\right].
		\end{align*}
		Thus, by definition of an admissible policy, it follows that
		\begin{align*}
		\limsup_{t\rightarrow \infty}\frac{\left\vert E\left[f\left(W(t)\right)\right]\right\vert}{t} \le \left(r+\frac{h}{\eta}\right)\limsup_{t\rightarrow \infty}\frac{ E\left[W(t)\right]}{t}=0,
		\end{align*}
		proving part (ii).
	\end{proof}

We conclude this section with a proof of Theorem \ref{thm:2}.

	\begin{proof}[Proof of Theorem \ref{thm:2}]
		By Equation (\ref{eq:5.13}), note that for an admissible policy $\theta$,
		\begin{align}
		dW(s) = d\chi(s) -\eta W(s)\,ds - \theta(W(s))\,ds+dL(s).\label{eq:7.21}
		\end{align}
		Furthermore, since $L(s)$ is nondecreasing in $s$, the processes is a VF function almost surely; see Section B.2 in Harrison (2013). Therefore,
		\begin{align}
		\left[dW(s)\right]^2 &=\left[d\chi(s)\right]^2 + 2\,d\chi(s) \left[-\eta W(s)\,ds - \theta(W(s))\,ds+dL(s)\right] \notag\\&\qquad\qquad + \left[-\eta W(s)\,ds - \theta(W(s))\,ds+dL(s)\right]^2\label{eq:7.22}\\
		&=\sigma^2 \,ds.\notag
		\end{align}
		Note that the last two terms on the right hand side of Equation (\ref{eq:7.22}) are zero; see Chapter 4 in Harrison (2013). Then, for $f\in C^2[0,\infty)$, It\^{o}'s Lemma gives 
		\begin{align}
		df(W(s)) = f^{\prime}(W(s)) dW(s) + \frac{1}{2}f^{\prime\prime}(W(s)) \left[dW(s)\right]^2.\label{eq:7.23}
		\end{align}
		Define the differential operator $\Gamma_{\theta}:C^2[0,\infty)\rightarrow C[0,\infty)$ by
		\begin{align}
		\left(\Gamma_{\theta}f\right)(w) = \frac{1}{2}\sigma^2f^{\prime\prime}(w)- \left[\eta w + \theta(w) -a \right]f^{\prime}(w),\quad w\ge 0.\label{eq:7.24}
		\end{align}
		Then, combining Equations (\ref{eq:7.21})--(\ref{eq:7.24}) gives 
		\begin{align}
		df(W(s)) &= f^{\prime}(W(s))\,d\chi(s) + \Gamma_{\theta}f(W(s))\,ds+ f^{\prime}(W(s))\,dL(s).\label{eq:7.25}
		\end{align}
		Integrating both sides of Equation (\ref{eq:7.25}) over $[0,t]$ gives 
		\begin{align}
		f(W(t)) = f(W(0)) + \int_{0}^{t}f^{\prime}(W(s))\,d\chi(s) + \int_{0}^{t}\Gamma_{\theta}f(W(s))\,ds + \int_{0}^{t}f^{\prime}(W(s))\,dL(s).\label{eq:7.29}
		\end{align}
		Recall that by Equation (\ref{eq:ProcessL}) the process $L$ increases only when $W=0$. Thus, for $f\in C^2[0,\infty])$ satisfying $f^{\prime}(0)=-r$ we have
		\begin{align}
		\int_{0}^{t}f^{\prime}(W(s))\,dL(s) = f^{\prime}(0)L(t) = -rL(t).\label{eq:7.30}
		\end{align} 
		By Lemma \ref{lem:6} and Equations (\ref{eq:7.29})--(\ref{eq:7.30}), it follows that
		\begin{align}
		f(W(t)) = f(W(0))  + \int_{0}^{t}\Gamma_{\theta}f(W(s))\,ds -rL(t).\label{eq:7.31}
		\end{align}
		In particular, for the solution $(\beta^*, f)$ of the Bellman equation (\ref{eq:6.1*})--(\ref{eq:6.2*}) it follows that
		\begin{align}
		\beta^* - c\left(\theta(w)\right) - hw \le \frac{1}{2}\sigma^2 f^{\prime\prime}(w) -\left[\eta w + \theta(w) -a\right]f^{\prime}(w),\quad w\ge 0,\label{eq:7.32}
		\end{align}
		with equality holding when $\theta = \theta^*$. Therefore, by Equations (\ref{eq:7.24}) and (\ref{eq:7.31})--(\ref{eq:7.32}) we have
		\begin{align}
		f(W(t)) - f(W(0)) + rL(t)& = \int_{0}^{t}\Gamma_{\theta}f(W(s))\,ds\notag\\
		&\ge \int_{0}^{t}\left[\beta^* - c\left(\theta(W(s))\right) - hW(s)\right]\,ds,\label{eq:7.33}
		\end{align}
		with equality holding when $\theta = \theta^*$. Rearranging terms in Equation (\ref{eq:7.33}), taking expectations, and dividing by $t$ gives 
		\begin{align}
		\frac{1}{t}E\left[\int_{0}^{t}\left[c\left(\theta(s)\right)+hW(s)\right]\,ds  + rL(t)\right]\ge \beta^* - \frac{1}{t}Ef(W(t)) + \frac{1}{t}Ef(W(0)),\label{eq:7.34}
		\end{align}
		with equality holding when $\theta=\theta^*$. Finally, taking limits on both sides of Equation (\ref{eq:7.34}) and applying Lemma \ref{lem:6} gives 
		\begin{align*}
		\limsup_{t\rightarrow \infty}\frac{1}{t}E\left[\int_{0}^{t}\left[c\left(\theta(s)\right)+hW(s)\right]\,ds  + rL(t)\right]\ge \beta^*,
		\end{align*}
		with equality holding when $\theta= \theta^*$. Therefore, the policy $\theta^*$ is optimal for the equivalent workload formulation and its long-run average cost is $\beta^*$. 
	\end{proof}
	
	\section{Solution to the Bellman Equation}\label{sec:7}
	In this section we prove Theorem \ref{thm:1} by considering an initial value problem that is closely related to the Bellman equation. Namely, for each fixed $\beta\ge 0$ consider the following initial value problem, denoted by IVP($\beta$): Find a function $v\in C^1[0,\infty)$ such that 
	\begin{align}
	&\frac{\sigma^2}{2}v^{\prime}(y) = \beta +\frac{\hat{\alpha}}{4}v^2(y) + \eta y\left(v(y) - \frac{h}{\eta}\right) -av(y),\quad y\ge 0,\label{eq:1}\\
	&v(0)= -r.\label{eq:2}
	\end{align}
	
	The following result is standard and its proof is provided in Appendix \ref{app:B}.
	\begin{lem}\label{lem:vUnique}
		For $\beta\ge 0$, there exists a unique solution $v_{\beta}\in C^{1}[0,\infty)$  to (\ref{eq:1})-(\ref{eq:2}).
	\end{lem}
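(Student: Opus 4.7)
The plan is to invoke the Picard--Lindel\"of theorem for local existence and uniqueness and then use a priori two-sided bounds to extend the local solution to all of $[0,\infty)$.

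\textbf{Step 1 (local existence and uniqueness).} Rewrite (\ref{eq:1}) as $v'(y)=F(y,v(y))$, where
\[
F(y,v) \;=\; \tfrac{2}{\sigma^{2}}\!\left[\beta+\tfrac{\hat{\alpha}}{4}v^{2}+(\eta y-a)v-hy\right].
\]
Since $F$ is a polynomial in $v$ with continuous $y$-coefficients, it is $C^{\infty}$ on $[0,\infty)\times\mathbb{R}$ and in particular locally Lipschitz in $v$ uniformly on compacta. The classical Cauchy--Lipschitz theorem then yields a unique maximal solution $v_{\beta}\in C^{1}[0,T_{\max})$ with the explosion alternative: either $T_{\max}=+\infty$, or $\limsup_{y\to T_{\max}^{-}}|v_{\beta}(y)|=+\infty$.

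\textbf{Step 2 (lower bound).} Completing the square in $v$ gives the pointwise inequality
\[
F(y,v)\;\ge\;\tfrac{2}{\sigma^{2}}\!\left[\beta-\tfrac{(\eta y-a)^{2}}{\hat{\alpha}}-hy\right],
\]
uniformly in $v\in\mathbb{R}$, obtained by minimizing the upward parabola $v\mapsto\beta+\tfrac{\hat{\alpha}}{4}v^{2}+(\eta y-a)v-hy$ over $v$. Integrating this lower bound from $0$ to $y$ yields $v_{\beta}(y)\ge -r-P(y)$ for an explicit cubic polynomial $P$, so $v_{\beta}$ cannot escape to $-\infty$ in finite time.

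\textbf{Step 3 (upper bound --- the main obstacle).} Because $F(y,v)\to+\infty$ as $v\to+\infty$ with leading term $\tfrac{\hat{\alpha}}{2\sigma^{2}}v^{2}$, finite-time upward blow-up is the only remaining threat. I would rule it out by constructing a supersolution $\bar v\in C^{1}[0,\infty)$ with $\bar v(0)\ge -r$ and $\bar v'(y)\ge F(y,\bar v(y))$ for all $y\ge 0$. A natural candidate is the larger nullcline of $F(y,\cdot)$,
\[
\phi_{+}(y)\;=\;\tfrac{2}{\hat{\alpha}}\!\left[-(\eta y-a)+\sqrt{(\eta y-a)^{2}-\hat{\alpha}(\beta-hy)}\,\right],
\]
which satisfies $F(y,\phi_{+}(y))=0$ whenever the discriminant $D(y)=(\eta y-a)^{2}-\hat{\alpha}(\beta-hy)$ is nonnegative. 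A direct asymptotic expansion shows $\phi_{+}(y)\to h/\eta$ as $y\to\infty$, so $\phi_{+}$ is bounded on $\{y:D(y)\ge 0\}$. Since $D$ is a quadratic in $y$ opening upward, $\{D\ge 0\}$ is either all of $[0,\infty)$ or the complement of a bounded initial interval $[0,y^{*}]$; extending $\phi_{+}$ on $[0,y^{*}]$ by a sufficiently large constant $\bar C$ (chosen so that $\bar C\ge\phi_{+}(y^{*})$ and $F(y,\bar C)\le 0$ for $y\in[0,y^{*}]$, which is possible because $F$ is continuous on the compact set $[0,y^{*}]\times\{\bar C\}$ for $\bar C$ large) produces $\bar v$ with the required properties. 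The standard scalar trapping argument---if $v_{\beta}(y_{0})=\bar v(y_{0})$ at a first crossing, then $v_{\beta}'(y_{0})=F(y_{0},\bar v(y_{0}))\le\bar v'(y_{0})$, contradicting the crossing---then yields $v_{\beta}(y)\le\bar v(y)$ on $[0,T_{\max})$.

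\textbf{Step 4 (conclusion).} The two-sided bound shows $v_{\beta}$ is bounded on every compact subinterval of $[0,T_{\max})$, so the explosion alternative forces $T_{\max}=+\infty$. Uniqueness on $[0,\infty)$ is inherited from local uniqueness by the standard gluing argument: any two $C^{1}$ solutions agree on their common maximal interval. The hard part is producing the explicit supersolution in Step 3 and verifying the asymptotics of $\phi_{+}$; the remaining ingredients are routine.
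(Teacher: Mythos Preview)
Your Step~3 contains a genuine error that breaks the argument. You assert that for $\bar C$ large one has $F(y,\bar C)\le 0$ on $[0,y^{*}]$, but $F(y,\cdot)$ is an \emph{upward}-opening parabola in $v$ with leading coefficient $\hat{\alpha}/(2\sigma^{2})>0$, so in fact $F(y,\bar C)\to+\infty$ as $\bar C\to+\infty$. Worse, on $[0,y^{*}]$ the discriminant $D(y)$ is negative by definition, so the quadratic $v\mapsto F(y,v)$ has no real roots and is strictly positive for \emph{every} $v\in\mathbb{R}$; hence no function with $\bar v'\le 0$ (in particular no constant) can satisfy $\bar v'\ge F(y,\bar v)$ there, and your patching construction collapses. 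A secondary gap: even on $\{D\ge 0\}$, the nullcline $\phi_{+}$ satisfies only $F(y,\phi_{+}(y))=0$; to serve as a supersolution you would also need $\phi_{+}'(y)\ge 0$, which you neither state nor verify.

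This is not a cosmetic issue: Riccati equations $v'=q_{2}v^{2}+\cdots$ with $q_{2}>0$ can blow up in finite time, and elementary barriers do not rule this out here. The paper takes a completely different route, applying the classical Riccati-to-linear substitution $y(x)=\exp\bigl\{-q_{2}\int_{0}^{x}v\bigr\}$ (Lemma~\ref{lem:7}) to convert (\ref{eq:1})--(\ref{eq:2}) into a second-order \emph{linear} equation of Kummer type, for which global $C^{2}$ solutions are furnished by confluent hypergeometric function theory; one then recovers $v=-y'/(q_{2}y)$. Uniqueness is handled exactly as in your Step~1 via local Lipschitzness. The linearization is what buys global existence: it trades the finite-time blow-up question for $v$ for the question of whether the linear solution $y$ vanishes, and the latter is accessible through special-function theory rather than comparison.
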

	For the remainder of this section, we analyze the (unique) solution to Equations (\ref{eq:1})--(\ref{eq:2}), focusing on how the behavior of the solution varies with the parameter $\beta$. Using this approach, we ultimately find a $\beta^*>0$, with corresponding solution $v_{\beta^*}$, such that the pair $(\beta^*, v_{\beta^*})$ solves the original Bellman equation. Namely, we look for $\beta^*$ such that $v_{\beta^*}$ satisfies the second condition in Equation (\ref{eq:fBoundary}) that $v_{\beta^*}$ is increasing with $\lim_{y\rightarrow\infty} v_{\beta^*}(y)=h/\eta$.
	

For much of our analysis, we consider parameters that satisfy one of the two cases, given in Assumption \ref{ass:ParamAssumption}. To state the assumption, let \[\underline{\beta}_1=0, \text{ and } \underline{\beta}_2=-ar-\frac{\hat{\alpha} r^2}{4}.\]
\begin{assumption}
    \label{ass:ParamAssumption} 
    One of the following holds:
    \begin{enumerate}[label=(\alph*)]
        \item Case 1: $a>-\frac{\hat{\alpha}}{4}r$ and $\beta\geq \underline{\beta}_1$; 
        \item Case 2: $a\leq -\frac{\hat{\alpha}}{4}r$ and $\beta>\underline{\beta}_2$.
    \end{enumerate}
\end{assumption}
\begin{remark}
    Note that under Assumption \ref{ass:ParamAssumption}(b), we have that $\underline{\beta}_2\geq 0$.
\end{remark}
Lemmas \ref{lem:vUpperBound}-\ref{lem:vConstant} facilitate the analysis to follow.
\begin{lem} \label{lem:vUpperBound}
    If $y>0$ is a local maximizer of $v_\beta(y)$, then $v_\beta(y)\leq h/y$. 
\end{lem}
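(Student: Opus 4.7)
The plan is to apply the first- and second-order necessary conditions at an interior local maximizer of $v_\beta$, exploiting the ODE (\ref{eq:1}) both directly and after differentiation in $y$.

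By Lemma \ref{lem:vUnique}, $v_\beta \in C^1[0,\infty)$; since the right-hand side of (\ref{eq:1}) is polynomial in $(y, v_\beta(y))$, a standard bootstrap shows $v_\beta \in C^2[0,\infty)$. At any interior local maximizer $y > 0$, both the first-order condition $v_\beta'(y) = 0$ and the second-order condition $v_\beta''(y) \le 0$ are therefore available. Substituting the FOC directly into (\ref{eq:1}) produces the algebraic identity
\[
h y \;=\; \beta \,+\, \tfrac{\hat{\alpha}}{4}\,v_\beta(y)^2 \,+\, (\eta y - a)\,v_\beta(y).
\]
In parallel, differentiating both sides of (\ref{eq:1}) with respect to $y$ gives
\[
\tfrac{\sigma^2}{2}\,v_\beta''(y) \;=\; \tfrac{\hat{\alpha}}{2}\,v_\beta(y)\,v_\beta'(y) \,+\, \eta\,v_\beta(y) \,+\, (\eta y - a)\,v_\beta'(y) \,-\, h,
\]
and substituting $v_\beta'(y) = 0$ collapses this to $\tfrac{\sigma^2}{2}\,v_\beta''(y) = \eta\,v_\beta(y) - h$, so that the SOC forces $\eta\,v_\beta(y) \le h$.

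From the algebraic identity, dropping the nonnegative terms $\beta$ (nonnegative by Assumption \ref{ass:ParamAssumption}) and $\tfrac{\hat{\alpha}}{4}\,v_\beta(y)^2$ on the right-hand side yields the companion inequality $(\eta y - a)\,v_\beta(y) \le h y$. The main obstacle is then to combine this FOC-based inequality with the SOC-based inequality $\eta\,v_\beta(y) \le h$ to extract the precise $y$-dependent bound $v_\beta(y) \le h/y$ stated in the lemma. I expect this to go through by a short case split: the case $v_\beta(y) \le 0$ is trivial because $h/y > 0$; for $v_\beta(y) > 0$, the combination of $(\eta y - a)\,v_\beta(y) \le h y$ with $\eta\,v_\beta(y) \le h$, together with Assumption \ref{ass:ParamAssumption} (which controls the sign of $a$ and therefore the cross-term $-a\,v_\beta(y)$), should yield the desired $h/y$ bound. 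The trickiest piece will be ensuring the $-a\,v_\beta(y)$ contribution is absorbed cleanly under both cases of the assumption.
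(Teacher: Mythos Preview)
The lemma as printed contains a typo: the intended bound is $v_\beta(y)\le h/\eta$, not $h/y$. This is evident from the paper's own proof, which derives $\tfrac{\sigma^2}{2}v_\beta''(y)=\eta\bigl(v_\beta(y)-h/\eta\bigr)\le 0$ and concludes from there, and from every subsequent use of the lemma (e.g.\ Corollary~\ref{cor:vMaximumAndSup} and Lemma~\ref{lem:vLimit} both invoke it to get $v_\beta\le h/\eta$).

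Your second-order argument already \emph{is} the paper's proof: differentiate (\ref{eq:1}), substitute $v_\beta'(y)=0$, obtain $\tfrac{\sigma^2}{2}v_\beta''(y)=\eta v_\beta(y)-h$, and apply the SOC. You should stop there. Everything after that point in your proposal---the FOC-based algebraic identity, the appeal to Assumption~\ref{ass:ParamAssumption} to control the sign of $\beta$ and of $-a\,v_\beta(y)$, and the case split on the sign of $v_\beta(y)$---is an attempt to reach the misprinted target $h/y$. That target is not only unnecessary but false in general: for a local maximizer at large $y$ there is no mechanism forcing $v_\beta(y)$ below $h/y$, and your proposed combination of $(\eta y-a)v_\beta(y)\le hy$ with $\eta v_\beta(y)\le h$ cannot manufacture one (indeed, for $a>0$ the first inequality is \emph{weaker} than $\eta y\,v_\beta(y)\le hy$, which is just $v_\beta(y)\le h/\eta$ again). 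Drop that entire second half and you match the paper exactly.
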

\begin{proof}
    Because $y$ is a local maximizer, we have that $v_\beta^{\prime}(y)=0$, and $v_\beta''(y)\leq 0$. Differentiating both sides of Equation (\ref{eq:1}) and using $v_\beta'(y)=0$, we write \[\frac{\sigma^2}{2}v_\beta''(y)=\eta\left( v_\beta(y)-\frac{h}{\eta}\right)\leq 0,\] from which it follows that $v_\beta(y)\leq h/y$.
\end{proof}

	\begin{lem}\label{lem:vIncreaseSup}
		Under Assumption \ref{ass:ParamAssumption}, $v_{\beta}$ increases to its supremum.
	\end{lem}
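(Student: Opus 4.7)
My plan is to show that $v_\beta$ is nondecreasing from $y=0$ up to the point where it first attains (or approaches) its supremum, which is exactly what the statement asserts. The first step is to verify $v_\beta'(0)>0$ under Assumption \ref{ass:ParamAssumption}. Evaluating Equation (\ref{eq:1}) at $y=0$ with $v_\beta(0)=-r$ yields $\frac{\sigma^2}{2}v_\beta'(0)=\beta+\frac{\hat{\alpha}}{4}r^2+ar$. Under Assumption \ref{ass:ParamAssumption}(a), $a>-\frac{\hat{\alpha}}{4}r$ gives $\frac{\hat{\alpha}}{4}r^2+ar=r(a+\frac{\hat{\alpha}}{4}r)>0$, which together with $\beta\geq 0$ implies $v_\beta'(0)>0$. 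Under Assumption \ref{ass:ParamAssumption}(b), $\beta>-ar-\frac{\hat{\alpha}}{4}r^2$ directly yields $v_\beta'(0)>0$.

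Next, define $y_1=\inf\{y>0:v_\beta'(y)=0\}$, with $y_1=+\infty$ if no such point exists. If $y_1=+\infty$, continuity and $v_\beta'(0)>0$ give $v_\beta'>0$ on $[0,\infty)$, so $v_\beta$ is strictly increasing and the claim holds. Otherwise $v_\beta'>0$ on $[0,y_1)$, so $y_1$ is a local maximum and Lemma \ref{lem:vUpperBound} yields $v_\beta(y_1)\leq h/\eta$; differentiating the ODE and using $v_\beta'(y_1)=0$ gives $\frac{\sigma^2}{2}v_\beta''(y_1)=\eta(v_\beta(y_1)-h/\eta)$. In the boundary case $v_\beta(y_1)=h/\eta$, the ODE at $y_1$ forces $\beta=ah/\eta-\hat{\alpha}h^2/(4\eta^2)$, so the constant map $\tilde v\equiv h/\eta$ also solves the ODE and matches $v_\beta$ in both value and first derivative at $y_1$; by the same uniqueness argument as in Lemma \ref{lem:vUnique} (applied at $y_1$ rather than at the origin), $v_\beta\equiv h/\eta$ on $[y_1,\infty)$, and the supremum is attained for all $y\geq y_1$. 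In the strict case $v_\beta(y_1)<h/\eta$, we have $v_\beta''(y_1)<0$, hence $v_\beta'<0$ on a right-neighborhood of $y_1$. I would then claim $v_\beta'<0$ on all of $(y_1,\infty)$: supposing for contradiction that $y_2$ is the smallest point greater than $y_1$ with $v_\beta'(y_2)=0$, $v_\beta$ strictly decreases on $(y_1,y_2)$, so $v_\beta(y_2)<v_\beta(y_1)\leq h/\eta$; but $y_2$ is a local minimum so $v_\beta''(y_2)\geq 0$, and the same differentiation of the ODE yields the symmetric bound $v_\beta(y_2)\geq h/\eta$, a contradiction. Thus $v_\beta$ is nondecreasing on $[0,y_1]$ and attains its supremum $v_\beta(y_1)$ at $y_1$.

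The main obstacle is justifying that no second critical point can appear after $y_1$ in the strict case $v_\beta(y_1)<h/\eta$; this contradiction rests on the asymmetry that any interior local maximum satisfies $v\leq h/\eta$ while any interior local minimum satisfies $v\geq h/\eta$, both consequences of the identity $\frac{\sigma^2}{2}v_\beta''=\eta(v_\beta-h/\eta)$ at critical points together with the sign of $v_\beta''$. A secondary subtlety is handling the boundary case $v_\beta(y_1)=h/\eta$ by invoking uniqueness of the IVP, rather than by a second-derivative argument that degenerates when $v_\beta''(y_1)=0$.
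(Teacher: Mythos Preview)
Your proof is correct and complete in the essential points, but it proceeds by a genuinely different route than the paper. The paper argues by direct contradiction: if $v_\beta$ did not increase to its supremum, one could find $x_1<x_2<x_3$ at which $v_\beta$ takes a common value $v$ with derivative signs $+,-,+$; subtracting the ODE at $(x_1,x_2)$ and at $(x_2,x_3)$ forces $v-h/\eta$ to be both negative and positive, a contradiction. Your approach instead classifies critical points via the identity $\tfrac{\sigma^2}{2}v_\beta''=\eta\bigl(v_\beta-h/\eta\bigr)$ valid at any zero of $v_\beta'$: the first critical point $y_1$ has $v_\beta(y_1)\le h/\eta$, and any subsequent critical point $y_2$ would be a local minimum with $v_\beta(y_2)\ge h/\eta$, contradicting the strict decrease on $(y_1,y_2)$. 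Your argument is more structural and, by handling the boundary case $v_\beta(y_1)=h/\eta$ via uniqueness of the IVP, effectively anticipates the paper's later Lemma~\ref{lem:vConstant}; the paper's three-point trick is shorter but gives less information about what happens at the supremum.

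One small gap worth closing: the step ``$v_\beta'>0$ on $[0,y_1)$, so $y_1$ is a local maximum'' is not self-evident, because a priori $y_1$ could be an inflection where $v_\beta$ continues increasing; and invoking Lemma~\ref{lem:vUpperBound} at that point is circular, since its proof uses precisely $v_\beta''(y_1)\le 0$. The clean fix is direct: if $v_\beta''(y_1)>0$, then $v_\beta'$ would be strictly increasing through zero at $y_1$, forcing $v_\beta'<0$ on a left-neighborhood of $y_1$ and contradicting the definition of $y_1$. Hence $v_\beta''(y_1)\le 0$, and your identity $\tfrac{\sigma^2}{2}v_\beta''(y_1)=\eta\bigl(v_\beta(y_1)-h/\eta\bigr)$ immediately gives $v_\beta(y_1)\le h/\eta$ without appeal to Lemma~\ref{lem:vUpperBound}. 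With that tweak the argument is airtight.
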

	\begin{proof}
		First, note that $v_{\beta}^{\prime}(0) = \frac{2\beta}{\sigma^2} + \frac{\hat{\alpha}}{2\sigma^2}r^2 +\frac{2ar}{\sigma^2}>0$ in either case of Assumption \ref{ass:ParamAssumption}. Aiming for a contradiction, suppose $v_{\beta}$ does not increase to its maximum. Then we must have $0\le x_1< x_2<x_3$ such that 
		\begin{align*}
		&v_{\beta}(x_1)= v_{\beta}(x_2) = v_{\beta}(x_3)= v, \\
		&v_{\beta}^{\prime}(x_1)>0,\,\,\,\,v_{\beta}^{\prime}(x_2)<0,\,\,\,\, v_{\beta}^{\prime}(x_3)>0.
		\end{align*}
		In particular, we have the following equations:
		\begin{align}
		v_{\beta}^{\prime}(x_1) = \frac{2\beta}{\sigma^2} + \frac{\hat{\alpha}}{2\sigma^2}v^2 +\frac{2\eta}{\sigma^2} x_1\left(v - \frac{h}{\eta}\right) -\frac{2av}{\sigma^2}>0,\label{eq:125}\\
		v_{\beta}^{\prime}(x_2) = \frac{2\beta}{\sigma^2} + \frac{\hat{\alpha}}{2\sigma^2}v^2 +\frac{2\eta}{\sigma^2} x_2\left(v - \frac{h}{\eta}\right) -\frac{2av}{\sigma^2}<0,\label{eq:126}\\
		v_{\beta}^{\prime}(x_3) = \frac{2\beta}{\sigma^2} + \frac{\hat{\alpha}}{2\sigma^2}v^2 +\frac{2\eta}{\sigma^2} x_3\left(v - \frac{h}{\eta}\right) -\frac{2av}{\sigma^2}>0.\label{eq:127}
		\end{align}
		On the one hand, subtracting (\ref{eq:126}) from (\ref{eq:125}) yields
		\begin{align}
		\frac{2\eta}{\sigma^2}\left(x_1-x_2\right)\left(v-\frac{h}{\eta}\right)>0.\label{eq:128}
		\end{align}
		Because $x_1-x_2<0$, we conclude from (\ref{eq:128}) that 
		\begin{align}
		v-\frac{h}{\eta}<0\label{eq:129}
		\end{align}
		On the other hand, subtracting (\ref{eq:126}) from (\ref{eq:127}) gives
		\begin{align}
		\frac{2\eta}{\sigma^2}\left(x_3-x_2\right)\left(v-\frac{h}{\eta}\right)>0.\label{eq:130}
		\end{align}
		But, we deduce from Equation (\ref{eq:129}) and from $x_3-x_2>0$ that the left hand side of Equation (\ref{eq:130}) is negative, which is a contradiction. This completes the proof.
	\end{proof}

\begin{lem}\label{lem:vConstant}
		Let $0\leq x_1<x_2$. Under Assumption \ref{ass:ParamAssumption}, the following condition is necessary for $v_\beta(x)$ to be constant on $(x_1,x_2)$: \begin{align}  \label{eq:BetaConstant}
		    \beta=a\frac{h}{\eta}-\frac{\hat{\alpha}}{4}\left(\frac{h}{\eta}\right)^2.
		\end{align}
  Moreover, if $v_\beta$ is constant on $(x_1,x_2)$, then $v_\beta(x)=h/\eta$ for $x\in (x_1,x_2)$, and letting $\hat{x}={\rm inf} \{x\geq 0: v_\beta(x)=h/\eta\}$, it follows that $v_\beta$ is nondecreasing on $[0, \hat{x}]$ and stays constant at value $h/\eta$ thereafter.

  On the other hand, if Equation (\ref{eq:BetaConstant}) does not hold, then there is no interval on which $v_\beta$ is constant, i.e., the set $\{y\geq 0: v_\beta'(y)=0\}$ has Lebesgue measure zero. 
	\end{lem}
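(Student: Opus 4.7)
The plan is to exploit the structure of the ODE \eqref{eq:1} directly. Suppose first that $v_\beta$ is constant, equal to $v$, on $(x_1,x_2)$. Then $v_\beta'\equiv 0$ there, so \eqref{eq:1} reduces to
\[ 0 = \beta + \frac{\hat{\alpha}}{4}v^2 + \eta y\left(v-\frac{h}{\eta}\right) - av \qquad \text{for all } y\in (x_1,x_2). \]
The only $y$-dependent term is $\eta y(v-h/\eta)$, whose coefficient must vanish, forcing $v=h/\eta$; the remaining equation then gives \eqref{eq:BetaConstant}.

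\textbf{Global structure of $v_\beta$ on $[0,\infty)$.} Set $\hat{x}=\inf\{x\geq 0: v_\beta(x)=h/\eta\}$; continuity gives $v_\beta(\hat{x})=h/\eta$. By Lemma \ref{lem:vIncreaseSup} combined with Lemma \ref{lem:vUpperBound} (any local maximum is bounded by $h/\eta$), the supremum of $v_\beta$ equals $h/\eta$ and $v_\beta$ is nondecreasing on $[0,\hat{x}]$. For the behavior after $\hat{x}$: evaluating \eqref{eq:1} at $y=\hat{x}$ with $v_\beta(\hat{x})=h/\eta$ and \eqref{eq:BetaConstant} yields $v_\beta'(\hat{x})=0$, and the constant function $v\equiv h/\eta$ also solves \eqref{eq:1} on $[\hat{x},\infty)$ with the same initial condition at $\hat{x}$. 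Since the right-hand side of \eqref{eq:1} is polynomial in $v$ and hence locally Lipschitz, standard ODE uniqueness forces $v_\beta\equiv h/\eta$ on $[\hat{x},\infty)$.

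\textbf{Converse and measure zero.} If \eqref{eq:BetaConstant} fails, the contrapositive of the first paragraph rules out constancy on any interval. To upgrade this to the measure-zero assertion, differentiate \eqref{eq:1} with respect to $y$ and evaluate at any zero $y_0$ of $v_\beta'$ to obtain $v_\beta''(y_0)=\tfrac{2\eta}{\sigma^2}\bigl(v_\beta(y_0)-h/\eta\bigr)$. If $v_\beta(y_0)=h/\eta$, substituting $(v_\beta(y_0),v_\beta'(y_0))=(h/\eta,0)$ into \eqref{eq:1} would force \eqref{eq:BetaConstant}, a contradiction. Hence $v_\beta(y_0)\neq h/\eta$, so $v_\beta''(y_0)\neq 0$; by continuity of $v_\beta''$, the point $y_0$ is isolated among zeros of $v_\beta'$. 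Thus the zero set is discrete, hence at most countable, and has Lebesgue measure zero.

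\textbf{Anticipated obstacle.} The only nontrivial step is the measure-zero conclusion: rather than invoking real-analyticity of solutions to analytic ODEs, I would use the observation that differentiating the ODE at a critical point pins down the sign of $v_\beta''$ (unless both $v_\beta=h/\eta$ and \eqref{eq:BetaConstant} hold), so a second-derivative test isolates every critical point. The remaining pieces reduce to substitution into \eqref{eq:1}, the monotonicity supplied by Lemma \ref{lem:vIncreaseSup}, and a standard local-uniqueness appeal for the ODE.
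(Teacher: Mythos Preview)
Your proof is correct and follows essentially the same route as the paper: substitute into the ODE to force $v=h/\eta$ and the $\beta$-condition, then invoke Lemmas~\ref{lem:vUpperBound}--\ref{lem:vIncreaseSup} together with ODE uniqueness on $[\hat{x},\infty)$ for the global structure. One small point worth noting: your treatment of the measure-zero assertion is actually more complete than the paper's. The paper's proof only establishes ``no interval of constancy'' and leaves the passage to ``$\{v_\beta'=0\}$ has measure zero'' as an implicit restatement (the ``i.e.''), whereas your second-derivative argument---showing that every critical point is isolated because $v_\beta''(y_0)=\tfrac{2\eta}{\sigma^2}(v_\beta(y_0)-h/\eta)\neq 0$ whenever \eqref{eq:BetaConstant} fails---closes that gap cleanly and elementarily. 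A minor presentational remark: in your global-structure paragraph, the claim that $\sup v_\beta=h/\eta$ is fully justified only \emph{after} the ODE-uniqueness step you give immediately afterward (Lemmas~\ref{lem:vUpperBound}--\ref{lem:vIncreaseSup} alone do not rule out $v_\beta$ increasing past $h/\eta$ after the flat piece), so it would read more cleanly to swap the order of those two sentences.
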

	\begin{proof}
		Suppose the condition in Equation (\ref{eq:BetaConstant}) is violated, which implies 
		\begin{align}  \label{eq:BetaConstantContradict}
		\beta + \frac{\hat{\alpha}}{4}\left(\frac{h}{\eta}\right)^2-a\frac{h}{\eta} \not=0
		\end{align}
        Aiming for a contraction, suppose there exist an interval $(x_1,x_2)$ such that $v_\beta(y)=v$ on it. This implies $v_\beta'(y)=v_\beta''(y)=0$ on $(x_1,x_2)$. Differentiating both sides of Equation (\ref{eq:1}) and using $v_\beta'(y)=0$ on $(x_1,x_2)$ gives  
		\begin{align*}
		\frac{\sigma^2}{2} v_\beta''(y)=\eta\left(v_\beta(y)-\frac{h}{\eta}\right), \quad y\in (x_1,x_2).
		\end{align*}
		Thus, $v_\beta(y)=h/\eta$ on $(x_1,x_2)$. Substituting this into Equation (\ref{eq:1}) yields  
		\begin{align*}
		\frac{\sigma^2}{2} v_\beta'(y) = \beta+\frac{\hat{\alpha}}{4} \left(\frac{h}{\eta}\right)^2-a\frac{h}{\eta}\not=0,\quad y\in (x_1,x_2),
		\end{align*}
		which follows from (\ref{eq:BetaConstantContradict}) and contradicts that $v_\beta'(y)=0$ on $(x_1,x_2)$. Therefore, if (\ref{eq:BetaConstant}) does not hold, then there is no interval on which $v_\beta$ is constant. 

    Now, we turn to the first part of the lemma. If $v_\beta$ is constant on $(x_1,x_2)$, then $v_\beta'(x)=v_\beta''(x)=0$ on $(x_1,x_2)$. As argued above, these imply $v_\beta(x)=h/\eta$ on $(x_1,x_2)$. In addition, it follows from (\ref{eq:1}) and $v_\beta'(x)=-h/\eta$ on $(x_1,x_2)$ that \[\beta+\frac{\hat{\alpha}}{4}\left(\frac{h}{\eta}\right)^2-a\frac{h}{\eta}=0,\] proving the necessary condition (\ref{eq:BetaConstantContradict}). Building on these, because at any local maximum $v_\beta(x)\leq h/\eta$ by Lemma \ref{lem:vUpperBound} and $\hat{x}$ is the first time $v_\beta$ reaches to its maximum by Lemma \ref{lem:vIncreaseSup}, we conclude that $v_\beta$ is nondecreasing on $[0, \hat{x}]$. To conclude the proof, consider an auxiliary IVP involving (\ref{eq:1}) on $[\hat{x}, \infty)$ with the initial condition $v(\hat{x})=h/\eta$. Then setting $v(x)=h/\eta$ solves it. Moreover, combining that with $v_\beta$ on $[0, \hat{x})$ constitutes a solution to the IVP (\ref{eq:1})-(\ref{eq:2}). By Lemma \ref{lem:vUnique}, this is the unique solution.    
	\end{proof}
To facilitate the analysis below, we define the following four sets. First, consider Case 1 identified in Assumption \ref{ass:ParamAssumption} (i.e., Assumption \ref{ass:ParamAssumption}(a)) and let \begin{align*}
    {\cal I}_1 &= \left\{\beta\geq 0: v_\beta \text{ is nondecreasing on } (0, \infty) \right\},   \\
    {\cal D}_1 &= \left\{\beta\geq 0: \exists x_\beta \geq 0\text{ such that $v_\beta$ is nondecreasing on $(0,x_\beta)$ and decreasing on $(x_\beta, \infty)$}\right\}.
\end{align*}
Similarly, in Case 2 of Assumption \ref{ass:ParamAssumption} (Assumption \ref{ass:ParamAssumption}(b)), we define \begin{align*}
    {\cal I}_2 &= \left\{\beta>\underline{\beta}_2: v_\beta \text{ is nondecreasing on } (0, \infty) \right\},   \\
    {\cal D}_2 &= \left\{\beta > \underline{\beta}_2: \exists x_\beta \geq 0\text{ such that $v_\beta$ is nondecreasing on $(0,x_\beta)$ and decreasing on $(x_\beta, \infty)$}\right\}.
\end{align*}
	\begin{lem}\label{lem:12}
		We have the following: \begin{enumerate}[label=(\roman*)]
		    \item Under Assumption \ref{ass:ParamAssumption}(a), $\beta\in{\cal D}_1$ if and only if $\exists x_0\in (0, \infty)$ such that $v_\beta'(x_0)<0$.
            \item Under Assumption \ref{ass:ParamAssumption}(b), $\beta\in {\cal D}_2$ if and only if $\exists x_0\in (0, \infty)$ such that $v_\beta'(x_0)<0$.
		\end{enumerate}
	\end{lem}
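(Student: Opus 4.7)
\textit{Proof plan.} The \textit{only if} direction is immediate: if $v_\beta$ is strictly decreasing on $(x_\beta,\infty)$, the mean value theorem yields some $x_0 \in (x_\beta, \infty)$ with $v_\beta'(x_0) < 0$. For the \textit{if} direction, assume there exists $x_0 \in (0, \infty)$ with $v_\beta'(x_0) < 0$; the goal is to produce a breakpoint $x_\beta \geq 0$ such that $v_\beta$ is nondecreasing on $(0, x_\beta)$ and strictly decreasing on $(x_\beta, \infty)$.

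The crux of the argument is to rule out interior local minima of $v_\beta$. Suppose, toward a contradiction, that $y^*>0$ is a local minimum. Differentiating (\ref{eq:1}) and using $v_\beta'(y^*)=0$ yields $\frac{\sigma^2}{2} v_\beta''(y^*) = \eta(v_\beta(y^*) - h/\eta)$, and the local-minimum condition $v_\beta''(y^*) \geq 0$ forces $v_\beta(y^*) \geq h/\eta$. On the other hand, the hypothesis $v_\beta'(x_0)<0$ prevents $v_\beta$ from being nondecreasing on all of $(0,\infty)$, so by Lemma \ref{lem:vIncreaseSup} the supremum of $v_\beta$ is attained at some finite point $\hat{x}$, which is then a local maximum; Lemma \ref{lem:vUpperBound} gives $v_\beta(\hat{x}) \leq h/\eta$, hence $v_\beta \leq h/\eta$ everywhere. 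Combining $v_\beta(y^*) \geq h/\eta$, the global bound $v_\beta \leq h/\eta$, and the local-minimum property $v_\beta \geq v_\beta(y^*)$ near $y^*$ yields $v_\beta \equiv h/\eta$ on a neighborhood of $y^*$. By Lemma \ref{lem:vConstant}, this forces $\beta = ah/\eta - (\hat{\alpha}/4)(h/\eta)^2$ and $v_\beta$ to be nondecreasing on $[0,\hat{x}]$ and constant at $h/\eta$ thereafter, so $v_\beta' \geq 0$ everywhere, contradicting $v_\beta'(x_0) < 0$.

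With interior local minima excluded, let $x_\beta$ be the first point at which $v_\beta$ attains its (finite) supremum; Lemma \ref{lem:vIncreaseSup} then gives that $v_\beta$ is nondecreasing on $[0, x_\beta]$. I claim $v_\beta$ is nonincreasing on $[x_\beta, \infty)$: otherwise there would exist $x_\beta < y_1 < y_2$ (note $y_1 > x_\beta$ since $v_\beta(x_\beta)$ is the global supremum) with $v_\beta(y_1) < v_\beta(y_2)$, and the minimum of $v_\beta$ on $[x_\beta, y_2]$ would be attained strictly inside $(x_\beta, y_2)$, producing an interior local minimum---a contradiction. To upgrade ``nonincreasing'' to ``strictly decreasing'' on $(x_\beta, \infty)$, note that any subinterval on which $v_\beta$ is constant would, by Lemma \ref{lem:vConstant}, force $v_\beta$ to be globally nondecreasing-then-constant, again contradicting $v_\beta'(x_0) < 0$. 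Hence $v_\beta$ is strictly decreasing on $(x_\beta,\infty)$, so $\beta \in \mathcal{D}_1$ (resp.\ $\mathcal{D}_2$).

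The principal difficulty is the first step---eliminating interior local minima---since Lemma \ref{lem:vIncreaseSup} alone does not directly preclude them; one needs the second-order information from the ODE together with Lemmas \ref{lem:vUpperBound} and \ref{lem:vConstant} to show that any such minimum would force $v_\beta$ to be flat at height $h/\eta$, an option excluded by the hypothesis. Parts (i) and (ii) follow from the same argument, since Lemmas \ref{lem:vUpperBound}, \ref{lem:vIncreaseSup}, and \ref{lem:vConstant} hold under either case of Assumption \ref{ass:ParamAssumption}.
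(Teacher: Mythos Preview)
Your proof is correct and reaches the same conclusion, but the route differs from the paper's. For the converse direction, the paper argues directly at the first-order level: after locating the maximizer $x^*$ (via Lemma~\ref{lem:vIncreaseSup} and Lemma~\ref{lem:vUpperBound}), it supposes $v_\beta$ is \emph{not} decreasing on $[x^*,\infty)$, finds two points $x^*<x_1<x_2$ with $v_\beta(x_1)=v_\beta(x_2)=v\le h/\eta$ and $v_\beta'(x_1)<0<v_\beta'(x_2)$, and subtracts the ODE~(\ref{eq:1}) at $x_1$ and $x_2$ to obtain $0<\eta(x_2-x_1)(v-h/\eta)\le 0$, a contradiction. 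This is exactly the same two-point subtraction trick used in Lemma~\ref{lem:vIncreaseSup}, so the paper's proof is short and reuses existing machinery.

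Your approach instead works at the second-order level: you differentiate~(\ref{eq:1}) to show that any interior local minimum must sit at height $\ge h/\eta$, combine this with the global bound $\le h/\eta$ to force a flat stretch, and then invoke Lemma~\ref{lem:vConstant} to derive a contradiction with $v_\beta'(x_0)<0$. Having excluded local minima, the monotone structure on $(x_\beta,\infty)$ follows cleanly. This is a perfectly valid alternative; it leans more heavily on Lemma~\ref{lem:vConstant} and gives a somewhat more structural picture (no interior minima), at the cost of a slightly longer argument. The paper's version is a bit more economical since it avoids the detour through Lemma~\ref{lem:vConstant} and second derivatives.
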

	\begin{proof}
		First, note from Lemma \ref{lem:vConstant} that it is necessary that $v_\beta$ increases to $h/\eta$ and stay constant thereafter for it to be constant on any interval. In that case, we would have $\beta\in {\cal I}_i$ ($i=1$ under Assumption \ref{ass:ParamAssumption}(a) and $i=2$ under Assumption \ref{ass:ParamAssumption}(b)). Thus, for the remainder of the proof, we assume there is no interval on which $v_\beta$ is constant. 
  
        We prove Cases (i) and (ii) simultaneously because their proofs are identical. For $i=1,2$, let $\beta\in {\cal D}_i$. Aiming for a contradiction, assume there does not exist $x_0>0$ such that $v_\beta'(x_0)<0$. Then, $v_\beta'(x)\geq 0$ for all $x\geq 0$, i.e., $v_\beta$ is nondecreasing on $(0,\infty)$. Thus, $\beta\in {\cal I}_i$, a contradiction. Therefore, there exists $x_0>0$ such that $v_\beta'(x_0)<0$.
        
        For the other direction, suppose there exists $x_0>0$ such that $v_\beta'(x_0)<0$. Because $v_\beta$ increases to its maximum (Lemma \ref{lem:vIncreaseSup}), it is not constant on any interval (by the argument given in the opening paragraph of this proof) and $v_\beta'(x_0)<0$, it achieves its maximum at some $x^\ast<x_0$. Thus, by Lemma \ref{lem:vUpperBound}, we have that
		\begin{align}
		v_{\beta}(x) \le v_{\beta}(x^*) \leq\frac{h}{\eta},\quad x\ge 0.\label{eq:134}
		\end{align}
		Aiming for a contradiction, suppose that  $\beta\not\in\mathcal{D}_i$. Then $v_{\beta}$ cannot be decreasing over $[x^*,\infty)$. Thus, there exist $x_1$ and $x_2$ such that
		\begin{align*}
		&x^*<x_1<x_2,\\
		&v= v_{\beta}(x_1)= v_{\beta}(x_2)\leq \frac{h}{\eta},\\
		&v_{\beta}^{\prime}(x_1)<0<v_{\beta}^{\prime}(x_2).
		\end{align*}
		In particular, the following holds:
		\begin{align}
		v_{\beta}^{\prime}(x_1) = \frac{2\beta}{\sigma^2} + \frac{\hat{\alpha}}{2\sigma^2}v^2 +\frac{2\eta}{\sigma^2} x_1\left(v - \frac{h}{\eta}\right)-av<0,\label{eq:135}\\
		v_{\beta}^{\prime}(x_2) = \frac{2\beta}{\sigma^2} + \frac{\hat{\alpha}}{2\sigma^2}v^2 +\frac{2\eta}{\sigma^2} x_2\left(v - \frac{h}{\eta}\right)-av>0.\label{eq:136}
		\end{align}
		Subtracting (\ref{eq:135}) from (\ref{eq:136}) gives 
		\begin{align*}
		0<v_{\beta}^{\prime}(x_2) - v_{\beta}^{\prime}(x_1) = \eta\left(x_2- x_1\right)\left(v-\frac{h}{\eta}\right)\leq 0,
		\end{align*}
		where the last inequality follows because $x_2-x_1>0$ and $v \leq \frac{h}{\eta}$ by Equation (\ref{eq:134}), leading to a contradiction. Thus, $\beta\in {\cal D}_i$.
	\end{proof}
	
	\begin{cor}\label{cor:SetsPartition}
		Under Assumption \ref{ass:ParamAssumption}, we have the following: \begin{enumerate}[label=(\roman*)]
		    \item In Case 1 of Assumption \ref{ass:ParamAssumption} (Assumption \ref{ass:ParamAssumption} (a)), the sets ${\cal I}_1$ and ${\cal D}_1$ partition $[0, \infty)$;  
            \item In Case 2 of Assumption \ref{ass:ParamAssumption} (Assumption \ref{ass:ParamAssumption}(b)), the sets ${\cal I}_2$ and ${\cal D}_2$ partition $(\underline{\beta}_2, \infty)$.
		\end{enumerate}
	\end{cor}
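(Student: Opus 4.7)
The plan is to deduce the partition result as a direct consequence of Lemma \ref{lem:12}, treating the two cases of Assumption \ref{ass:ParamAssumption} simultaneously since the argument is structurally identical in both. The idea is to classify each admissible $\beta$ according to the dichotomy: either $v_\beta$ has some point at which its derivative is strictly negative, or it does not. Lemma \ref{lem:12} tells us the first situation is precisely membership in $\mathcal{D}_i$, while the second must correspond to membership in $\mathcal{I}_i$.

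First I would verify disjointness, i.e., $\mathcal{I}_i\cap \mathcal{D}_i=\emptyset$. Suppose toward a contradiction that $\beta$ lies in both sets. Membership in $\mathcal{I}_i$ means $v_\beta$ is nondecreasing on $(0,\infty)$, so $v_\beta'(x)\geq 0$ for every $x>0$. Membership in $\mathcal{D}_i$ supplies $x_\beta\geq 0$ with $v_\beta$ decreasing on $(x_\beta,\infty)$; by the forward direction of Lemma \ref{lem:12}, there exists $x_0>0$ with $v_\beta'(x_0)<0$, contradicting nonnegativity of the derivative. Hence the two sets cannot overlap.

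Next I would show that the union covers the appropriate domain ($[0,\infty)$ in Case 1 and $(\underline{\beta}_2,\infty)$ in Case 2). Fix any $\beta$ in the relevant domain. There are exactly two possibilities: either $v_\beta'(x)\geq 0$ for every $x>0$, or there exists some $x_0>0$ with $v_\beta'(x_0)<0$. In the first case $v_\beta$ is nondecreasing on $(0,\infty)$ by definition, so $\beta\in\mathcal{I}_i$. In the second case the reverse direction of Lemma \ref{lem:12} gives $\beta\in\mathcal{D}_i$. Combined with the disjointness established above, this proves the partition claim in both parts (i) and (ii).

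I do not anticipate any real obstacle: the substantive structural work (establishing that a single sign change in the derivative forces global monotone-then-decreasing behavior) has already been carried out in Lemma \ref{lem:12}, and the residual content of the corollary is essentially bookkeeping. The only mild care is to note that the degenerate case of Lemma \ref{lem:vConstant}, where $v_\beta$ is nondecreasing and eventually constant at $h/\eta$, poses no issue: such a $v_\beta$ is nondecreasing on $(0,\infty)$ and hence unambiguously falls into $\mathcal{I}_i$.
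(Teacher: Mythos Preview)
Your proposal is correct and follows essentially the same approach as the paper: both arguments classify each admissible $\beta$ according to whether $v_\beta'$ is ever strictly negative, invoking Lemma~\ref{lem:12} for the $\mathcal{D}_i$ case and the definition of $\mathcal{I}_i$ otherwise. The paper's proof is terser and leaves disjointness implicit (it follows directly from the definitions, since a function that is nondecreasing on all of $(0,\infty)$ cannot be decreasing on any subinterval), whereas you spell it out via Lemma~\ref{lem:12}; this extra care is harmless and arguably cleaner.
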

	\begin{proof}
        Consider Case (i).
		For $\beta\ge 0$, if $v_{\beta}^{\prime}(x)<0$ for some $x>0$, then $\beta\in \mathcal{D}_1$ by Lemma \ref{lem:12}. Otherwise, $v_{\beta}^{\prime}(x) \ge 0$ for all $x>0$, in which case $\beta\in\mathcal{I}_1$ by definition. Proof of (ii) follows similarly. 
	\end{proof}
	\begin{cor}  \label{cor:vMaximumAndSup}
	    Under Assumption \ref{ass:ParamAssumption}, we have the following. In Case $i$ of Assumption \ref{ass:ParamAssumption} for $i=1,2$, if $\beta\in {\cal D}_i$, then $v_\beta$ achieves its maximum and \[\sup_{x\geq 0} v_\beta(x) < \frac{h}{\eta}. \]  
	\end{cor}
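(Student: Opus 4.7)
My plan is as follows.

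The first claim, that $v_\beta$ achieves its maximum, is essentially immediate from the definition of $\mathcal{D}_i$: by hypothesis there exists $x_\beta \geq 0$ such that $v_\beta$ is nondecreasing on $(0,x_\beta)$ and decreasing on $(x_\beta,\infty)$, so the supremum is attained at $x_\beta$ and equals $v_\beta(x_\beta)$. The real content is the strict inequality $v_\beta(x_\beta) < h/\eta$, so the bulk of the argument targets that.

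I would split into two cases. If $x_\beta = 0$, then $\sup_{x\ge 0} v_\beta(x) = v_\beta(0) = -r$, and since $h>0$, $\eta>0$, and $r\geq 0$, we have $-r < h/\eta$ trivially. If $x_\beta > 0$, continuity of $v_\beta'$ combined with $v_\beta' \geq 0$ on $(0,x_\beta)$ and $v_\beta' \leq 0$ on $(x_\beta,\infty)$ forces $v_\beta'(x_\beta) = 0$, so $x_\beta$ is a local maximizer. Lemma \ref{lem:vUpperBound} then yields the weak inequality $v_\beta(x_\beta) \leq h/\eta$, and I need to rule out equality.

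The main obstacle, and the crux of the argument, is excluding $v_\beta(x_\beta) = h/\eta$. I would do this by a uniqueness-of-ODE argument modeled on the proof of Lemma \ref{lem:vConstant}. Assume for contradiction that $v_\beta(x_\beta) = h/\eta$. Evaluating (\ref{eq:1}) at $y = x_\beta$, using $v_\beta'(x_\beta) = 0$ and $v_\beta(x_\beta) = h/\eta$, every occurrence of $y$ drops out and one obtains
\begin{align*}
0 \;=\; \beta + \frac{\hat{\alpha}}{4}\Bigl(\frac{h}{\eta}\Bigr)^{2} - a\,\frac{h}{\eta},
\end{align*}
i.e.\ $\beta$ satisfies the constancy condition (\ref{eq:BetaConstant}). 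Now define the candidate
\begin{align*}
\tilde v(y) \;=\; \begin{cases} v_\beta(y), & 0 \leq y \leq x_\beta, \\ h/\eta, & y > x_\beta.\end{cases}
\end{align*}
Because $v_\beta'(x_\beta) = 0$, this function lies in $C^1[0,\infty)$, and on $[x_\beta,\infty)$ the constant value $h/\eta$ satisfies (\ref{eq:1}) precisely because (\ref{eq:BetaConstant}) holds. Hence $\tilde v$ solves IVP($\beta$). By Lemma \ref{lem:vUnique}, $\tilde v = v_\beta$ on $[0,\infty)$, forcing $v_\beta \equiv h/\eta$ on $[x_\beta,\infty)$. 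This contradicts the hypothesis $\beta \in \mathcal{D}_i$, which requires $v_\beta$ to be \emph{decreasing} on $(x_\beta,\infty)$.

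The hardest part is recognizing that one needs to invoke the uniqueness of the IVP rather than argue purely from the local behavior at $x_\beta$: Lemma \ref{lem:vUpperBound} alone gives only the weak bound, and local information at a single point cannot by itself rule out equality in a first-order ODE with a quadratic nonlinearity. Splicing the solution with a constant tail and appealing to Lemma \ref{lem:vUnique} is the natural way to convert pointwise information into the required strict bound.
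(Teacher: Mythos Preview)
Your proposal is correct and follows essentially the same approach as the paper. Both split on whether $x_\beta=0$ or $x_\beta>0$, invoke Lemma~\ref{lem:vUpperBound} for the weak bound, and then rule out equality by showing that $v_\beta(x_\beta)=h/\eta$ together with $v_\beta'(x_\beta)=0$ forces $v_\beta\equiv h/\eta$ on $[x_\beta,\infty)$ via uniqueness of the IVP, contradicting $\beta\in\mathcal{D}_i$; the paper phrases this last step as ``argue as in the proof of Lemma~\ref{lem:vConstant}'' after first computing $v_\beta''(x_\beta)=0$, whereas you make the splicing argument explicit and skip the (unnecessary) second-derivative computation.
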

    \begin{proof}
        For $i=1,2$, by definition of ${\cal D}_i$, $\exists x^\ast\geq 0$ such that $v_\beta$ is nondecreasing on $(0, x^\ast)$ and it is decreasing on $(x^\ast, \infty)$. First, note that if $x^\ast=0$, then $v_\beta(x)$ is decreasing everywhere and $v_\beta(0)=-r$ and the result follows. Thus, we assume $x^\ast>0$. Note that $v_\beta$ achieves its maximum at $x^\ast$. Also, we conclude from Lemma \ref{lem:vUpperBound} that $v_\beta(x^\ast)\leq h/\eta$. Aiming for a contradiction, suppose $v_\beta(x^\ast)=h/\eta$. Note that $v_\beta'(x^\ast)=0$ because $x^\ast$ is the maximizer. From these, by differentiating both sides of Equation (\ref{eq:1}), we conclude that $v_\beta''(x^\ast)=0$. Then we can argue as in the proof of Lemma \ref{lem:vConstant} that $v_\beta(x)=h/\eta$ for $x\geq x^\ast$, implying $\beta\not\in {\cal D}_i$, a contradiction. Thus, $v_\beta(x^\ast)\not=h/\eta$, completing the proof.     
    \end{proof}
	\begin{lem}\label{lem:13}
        Under Case $i$ of Assumption \ref{ass:ParamAssumption} ($i=1,2$), we have that if $\beta\in{\cal D}_i$, 
		then $\lim\limits_{x\rightarrow\infty} v_{\beta}(x) = -\infty$.
	\end{lem}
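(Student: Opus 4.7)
The plan is to argue by contradiction, exploiting the strict gap $\sup v_\beta < h/\eta$ provided by Corollary \ref{cor:vMaximumAndSup} to show that the coefficient of $y$ in the right-hand side of (\ref{eq:1}) is bounded away from $0$ by a strictly negative constant, which forces $v_\beta'$ to drive $v_\beta$ down at an unbounded rate.

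First, I would note that since $\beta\in{\cal D}_i$, the function $v_\beta$ is decreasing on $(x_\beta, \infty)$, so $L:=\lim_{y\to\infty} v_\beta(y)$ exists in $[-\infty, v_\beta(x_\beta)]$. Suppose for contradiction that $L>-\infty$. Then $v_\beta$ is bounded on $[0,\infty)$, with $v_\beta(y)\in [L, M]$ where $M:=\sup_{y\geq 0} v_\beta(y)$. By Corollary \ref{cor:vMaximumAndSup}, $M<h/\eta$, so setting $\delta := h/\eta - M > 0$ we get
\[
v_\beta(y) - \frac{h}{\eta} \leq -\delta \quad \text{for all } y\geq 0.
\]

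Next, I would plug this into the ODE (\ref{eq:1}). The term $\eta y(v_\beta(y)-h/\eta)$ is bounded above by $-\eta\delta y$, while the remaining terms $\beta + \tfrac{\hat{\alpha}}{4}v_\beta^2(y) - a v_\beta(y)$ are bounded by some constant $C>0$ independent of $y$ (since $v_\beta$ is bounded). Therefore
\[
\frac{\sigma^2}{2} v_\beta'(y) \leq C - \eta\delta y, \quad y\geq 0.
\]
Choosing $y_0$ large enough that $C - \eta\delta y \leq -\eta\delta y/2$ for $y\geq y_0$, integration from $y_0$ to $y$ yields
\[
v_\beta(y) \leq v_\beta(y_0) - \frac{\eta\delta}{2\sigma^2}\bigl(y^2 - y_0^2\bigr),
\]
so $v_\beta(y)\to-\infty$ as $y\to\infty$. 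This contradicts $L>-\infty$, forcing $L=-\infty$.

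I do not expect any real obstacle: the key input (the strict inequality $\sup v_\beta < h/\eta$ for $\beta\in{\cal D}_i$) has already been established in Corollary \ref{cor:vMaximumAndSup}, and the rest is a straightforward differential-inequality argument. The only mild care needed is to confirm that the quadratic term $\tfrac{\hat{\alpha}}{4}v_\beta^2$ is indeed bounded in the contradiction hypothesis — which is immediate from the assumed finiteness of $L$ together with monotonicity past $x_\beta$.
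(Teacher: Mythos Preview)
Your proposal is correct and follows essentially the same approach as the paper: both argue by contradiction assuming $v_\beta$ is bounded below, invoke Corollary~\ref{cor:vMaximumAndSup} to extract a strict gap $\delta=h/\eta-\sup v_\beta>0$, bound the right-hand side of (\ref{eq:1}) above by a constant minus $\eta\delta y$, and integrate to force $v_\beta\to-\infty$. The only cosmetic difference is that you first observe the limit $L$ exists by eventual monotonicity, whereas the paper simply posits a lower bound $-K_1$; the analytic content is identical.
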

	\begin{proof}
		It follows from Corollary \ref{cor:vMaximumAndSup} that $v_\beta$ has a maximizer $x^\ast$ such that 
		\begin{align}
		v_{\beta}(x) \le v_{\beta}(x^*)<\frac{h}{\eta},\quad x\ge 0.\label{eq:137}
		\end{align}
		Also define 
		\begin{align}
		\ep = \frac{h}{\eta} - v_{\beta}(x^*)>0.\label{eq:138}
		\end{align}
		To prove $\lim\limits_{x\rightarrow\infty}v_{\beta}(x) = -\infty$, we argue by contradiction. To that end, suppose there exists a $K_1>0$ such that $v_{\beta}(x) \ge -K_1$ for $x\ge 0$. Then we have that 
		\begin{align}
		\vert v_{\beta}(x) \vert \le K_2= \max\left\{K_1, \frac{h}{\eta}\right\}.\label{eq:139}
		\end{align}
		Recalling IVP($\beta$), we bound $v_{\beta}^{\prime}(\cdot)$ using (\ref{eq:137})--(\ref{eq:139}) as follows:
		\begin{align}
		\frac{\sigma^2}{2}v_{\beta}^{\prime}(y)&\le \beta + \frac{\hat{\alpha}}{4}K_2^2 + \eta y \left(v_{\beta}(x^*)- \frac{h}{\eta}\right) +|a|K_2 = \left[\beta + \frac{\hat{\alpha}}{4}K_2^2 +|a|K_2 \right]-\ep\eta y,\quad y\ge 0.\label{eq:140}
		\end{align}
		Integrating both sides of (\ref{eq:140}) over $[0,y]$ and using the initial condition $v_{\beta}(0) = -r$ gives
		\begin{align}
		\frac{\sigma^2}{2}v_{\beta}^{\prime}(y)\le -\frac{\sigma^2}{2}r + \left[\beta + \frac{\hat{\alpha}}{4}K_2^2 +|a|K_2 \right]y-\frac{\eta\ep}{2}y^2,\quad y\ge 0.\label{eq:141}
		\end{align}
		Since $\eta\ep/2>0$, the right hand side of (\ref{eq:141}) tends to $-\infty$ as $y\rightarrow\infty$, implying that $v(y)\rightarrow-\infty$ as $y\rightarrow\infty$, a contradiction.
	\end{proof}
	
	\begin{lem}\label{lem:14}
		Under Case $i$ of Assumption \ref{ass:ParamAssumption} ($i=1,2$), the following are equivalent:
		\begin{itemize}
			\setlength{\itemsep}{0em}
			\item[(i)] $\beta\in\mathcal{D}_i$,
			\item[(ii)] $\exists x>0$ such that $v_{\beta}^{\prime}(x) <0$,
			\item[(iii)] $\exists x>0$ such that $v_{\beta}(x)<-r$,
			\item[(iv)] $\lim\limits_{x\rightarrow\infty}v_{\beta}(x) = -\infty$.
		\end{itemize}
	\end{lem}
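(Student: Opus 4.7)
The plan is to establish the chain of implications (i) $\iff$ (ii) $\implies$ (iv) $\implies$ (iii) $\implies$ (ii), so that all four statements become equivalent. The good news is that most of the heavy lifting has already been done in Lemmas \ref{lem:12} and \ref{lem:13}; what remains is essentially bookkeeping plus one application of the mean value theorem.

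First, I would observe that the equivalence (i) $\iff$ (ii) is exactly the content of Lemma \ref{lem:12}, so nothing more needs to be said for that step. Next, the implication (i) $\implies$ (iv) is exactly Lemma \ref{lem:13}. The implication (iv) $\implies$ (iii) is immediate from the initial condition $v_\beta(0)=-r$: if $v_\beta(x)\to -\infty$ as $x\to\infty$, then certainly $v_\beta(x)<-r$ for all sufficiently large $x$.

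For the remaining implication (iii) $\implies$ (ii), I would invoke the mean value theorem. Suppose there exists $x>0$ with $v_\beta(x)<-r=v_\beta(0)$. Since $v_\beta\in C^1[0,\infty)$ by Lemma \ref{lem:vUnique}, the mean value theorem yields a point $\xi\in (0,x)$ with
\begin{equation*}
v_\beta'(\xi) \;=\; \frac{v_\beta(x)-v_\beta(0)}{x} \;=\; \frac{v_\beta(x)+r}{x} \;<\; 0,
\end{equation*}
which establishes (ii). Combined with the three other implications, this closes the loop and proves the equivalence of (i)--(iv).

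I do not expect any real obstacle here, since each of the four implications is either a direct restatement of a previous lemma, a trivial consequence of the boundary condition $v_\beta(0)=-r$, or a one-line application of the mean value theorem. The only thing worth being careful about is ensuring that the argument works uniformly in both cases of Assumption \ref{ass:ParamAssumption}, but this is automatic because Lemmas \ref{lem:12} and \ref{lem:13} are themselves stated under Assumption \ref{ass:ParamAssumption} for both cases $i=1,2$.
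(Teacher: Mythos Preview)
Your proposal is correct and follows essentially the same approach as the paper: the paper also uses Lemma~\ref{lem:12} for (i)$\iff$(ii), Lemma~\ref{lem:13} for (i)$\implies$(iv), notes (iv)$\implies$(iii) is trivial, and closes with the same mean value theorem argument for (iii)$\implies$(ii).
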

	\begin{proof}
		Parts (i) and (ii) are equivalent by Lemma \ref{lem:12}. Part (i) implies (iv) by Lemma \ref{lem:13}. Clearly, (iv) implies (iii). Therefore, it suffices to prove that (iii) implies (ii). To that end, let $x_0>0$ be such that $v_{\beta}(x_0)<-r$. Since $v_{\beta}(0)=-r$, it follows from the mean value theorem that there exists a $\hat{x}_0\in (0,x_0)$ such that 
		\begin{align*}
		v^{\prime}_{\beta}(\hat{x}_0) = \frac{v_{\beta}(x_0) - v_{\beta}(0)}{x_0-0}<0,
		\end{align*}
		proving part (ii).
	\end{proof}
	
	\begin{lem}\label{lem:vLimit}
        Under Assumption \ref{ass:ParamAssumption}, we have that $\lim_{x\rightarrow \infty}v_{\beta}(x)=\infty$  if and only if there exists an $x_0> 0$ such that $v_{\beta}(x_0)\ge \frac{h}{\eta}$.
	\end{lem}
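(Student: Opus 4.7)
The plan is to handle the two implications separately, with the forward direction being immediate and the reverse requiring the main work. For $(\Rightarrow)$, if $v_\beta(x)\to\infty$ then eventually $v_\beta(x)\geq h/\eta$, so any large enough $x_0$ works.

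For $(\Leftarrow)$, my strategy is a first-crossing argument. Assuming $v_\beta(x_0)\geq h/\eta$ for some $x_0>0$ and using $v_\beta(0)=-r<h/\eta$ together with continuity, the intermediate value theorem produces the smallest $x^\ast\in(0,x_0]$ with $v_\beta(x^\ast)=h/\eta$, and by minimality $v_\beta'(x^\ast)\geq 0$. Evaluating the ODE (\ref{eq:1}) at $x^\ast$ causes the linear-in-$y$ drift term to vanish, yielding
\[
\tfrac{\sigma^2}{2}v_\beta'(x^\ast)=\beta+\tfrac{\hat\alpha}{4}(h/\eta)^2-a(h/\eta)=\beta-\beta_c,
\]
where $\beta_c:=a(h/\eta)-(\hat\alpha/4)(h/\eta)^2$. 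Hence necessarily $\beta\geq\beta_c$. If $\beta=\beta_c$ then $v_\beta'(x^\ast)=0$, and Lemma \ref{lem:vUnique} applied to the IVP on $[x^\ast,\infty)$ with initial value $h/\eta$ pins $v_\beta\equiv h/\eta$ beyond $x^\ast$; the hypothesis must therefore be read (or supplemented) so that the degenerate case $\beta=\beta_c$ is excluded, leaving $\beta>\beta_c$ strictly.

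Under $\beta>\beta_c$ the remaining work is to show (a) $v_\beta(y)>h/\eta$ for all $y>x^\ast$ and (b) $v_\beta(y)\to\infty$. For (a), any hypothetical return point $y_1>x^\ast$ with $v_\beta(y_1)=h/\eta$ would force $v_\beta'(y_1)\leq 0$, while the same ODE evaluation yields $v_\beta'(y_1)=2(\beta-\beta_c)/\sigma^2>0$, a contradiction. For (b), I would rewrite the ODE by completing the quadratic about $h/\eta$:
\[
\tfrac{\sigma^2}{2}v_\beta'(y)=(\beta-\beta_c)+\bigl(v_\beta(y)-h/\eta\bigr)\Bigl[\eta y-a+\tfrac{\hat\alpha}{4}\bigl(v_\beta(y)+h/\eta\bigr)\Bigr].
\]
For $y$ large enough the bracket is positive, and from (a) the factor $v_\beta(y)-h/\eta$ is strictly positive, so the product is nonnegative and $v_\beta'(y)\geq 2(\beta-\beta_c)/\sigma^2>0$. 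Integrating yields a linear lower bound in $y$, forcing $v_\beta(y)\to\infty$. The main obstacle is the $\beta=\beta_c$ edge case, where uniqueness traps the solution at $h/\eta$ once it lands there; the forward-derivative computation at the first crossing is what converts the qualitative hypothesis $v_\beta(x_0)\geq h/\eta$ into the quantitative strict inequality $\beta>\beta_c$ that makes the divergence argument go through.
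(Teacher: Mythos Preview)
Your argument is correct, and your flagging of the $\beta=\beta_c$ edge case is on target: the lemma as stated with $\geq$ fails precisely when $v_\beta$ plateaus at $h/\eta$, and the paper's own proof in fact assumes $v_\beta(x_0)>h/\eta$ strictly (ruling out $v_\beta'(x_1)=0$ at the first crossing via Lemma~\ref{lem:vConstant}, which requires knowing $v_\beta$ later exceeds $h/\eta$). So your ``read the hypothesis as strict'' resolution matches what the paper actually proves.

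Where you and the paper differ is in the mechanics after the first crossing. For the ``no return'' step, the paper argues that a return below $h/\eta$ would create an interior local maximum strictly above $h/\eta$, contradicting Lemma~\ref{lem:vUpperBound}; you instead evaluate the ODE directly at any hypothetical return point and get the sign contradiction $v_\beta'(y_1)=2(\beta-\beta_c)/\sigma^2>0$ versus $v_\beta'(y_1)\leq 0$. For divergence, the paper multiplies through by the integrating factor $\exp\{-\eta y^2/\sigma^2+ay\}$ and splits into the cases $a\leq 0$ and $a>0$, whereas your algebraic factorization $\tfrac{\sigma^2}{2}v_\beta'=(\beta-\beta_c)+(v_\beta-h/\eta)\bigl[\eta y-a+\tfrac{\hat\alpha}{4}(v_\beta+h/\eta)\bigr]$ gives the uniform lower bound $v_\beta'\geq 2(\beta-\beta_c)/\sigma^2$ for $y$ large, without case analysis. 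Your route is shorter and makes the role of the constant $\beta-\beta_c$ transparent; the paper's route leans on earlier structural lemmas (\ref{lem:vUpperBound}, \ref{lem:vIncreaseSup}, \ref{lem:vConstant}) more heavily but avoids the explicit factorization.
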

	\begin{proof}
		First, if $\lim_{x\rightarrow \infty}v_{\beta}(x)=\infty$, then clearly, there exists an $x_0> 0$ such that $v_{\beta}(x_0) > \frac{h}{\eta}$. To prove the other direction, suppose there exists $x_0>0$ such that $v_{\beta}(x_0)>\frac{h}{\eta}$, and define
		\begin{align*}
		x_1 = \inf\left\{x>0: v_{\beta}(x) \geq  \frac{h}{\eta} \right\},
		\end{align*}
		Because $v_\beta(0)=-r<\frac{h}{\eta}<v_\beta(x_0)$, by the intermediate value theorem, $v_\beta(x_1)=\frac{h}{\eta}$. Next, we argue that $v_{\beta}(x) > \frac{h}{\eta}$ for all $x > x_1$. If not, then there exists an $x_2>x_1$ such that $v_{\beta}(x_2) \leq \frac{h}{\eta}$. Then let 
		\begin{align*}
		x_3 = \inf\left\{x>x_1: v_{\beta}(x) \le \frac{h}{\eta}\right\}.
		\end{align*}
		Note that $v_{\beta}(x_3)= \frac{h}{\eta}$ by continuity of $v_{\beta}$. Furthermore, note that $x_3>x_1$ since $v_{\beta}(x_1) = \frac{h}{\eta}$ and \begin{align} \label{eq:vLimitLem_vDerivativePositive}
		    v_{\beta}^{\prime}(x_1) = \frac{2\beta}{\sigma^2} + \frac{\hat{\alpha}}{2\sigma^2} \left(\frac{h}{\eta}\right)^2-a\frac{h}{\eta}>0,
		\end{align} where the last inequality holds because \begin{enumerate}[label=(\roman*)]
		    \item $v_\beta'(x_1)\geq 0$ by definition of $x_1$, 
            \item $x_1<x_0, v_\beta(x_0)>\frac{h}{\eta}$ and $v_\beta$ increases to its maximum, 
            \item we cannot have $v_\beta'(x_1)=0$ by Lemma \ref{lem:vConstant} because $v_\beta(x_0)>\frac{h}{\eta}$. 
		\end{enumerate} Thus, (\ref{eq:vLimitLem_vDerivativePositive}) follows. Consequently, we have that 
		\begin{align}
		v_{\beta}(x) > \frac{h}{\eta}\quad\text{for}\quad x\in (x_1, x_3).\label{eq:142}
		\end{align}
		By continuity, $v_\beta(x)$ achieves a local maximum at some $\hat{x}\in (x_1,x_3)$ and $v_\beta(\hat{x})>\frac{h}{\eta}$, but this contradicts Lemma \ref{lem:vUpperBound}. Therefore, we conclude that
		\begin{align}
		v_{\beta}(x) > \frac{h}{\eta}\quad \text{for}\quad x\ge x_1.\label{eq:143}
		\end{align}
        In particular, $\beta\not\in{\cal D}_i$. Rather, $\beta\in {\cal I}_i$ and $v_\beta$ is nondecreasing by Corollary (\ref{cor:SetsPartition}). So, we have that \begin{align}
            v_\beta(x) \geq v_\beta(x_0) > \frac{h}{\eta} \quad \text{ for } x> x_0.
        \end{align} To conclude the proof, we consider two cases: Case (i) $a\leq 0$, Case (ii) $a>0$. When $a\leq 0$, we note from (\ref{eq:1}) that
		\begin{align}
		\frac{\sigma^2}{2}v_{\beta}^{\prime}(y) \geq \beta + \frac{\hat{\alpha}}{4}\left(\frac{h}{\eta}\right)^2 \quad \text{for } y\ge x_0.\label{eq:144}
		\end{align} 
		Integrating both sides of (\ref{eq:144}) over $[x_0,y]$ gives
		\begin{align}
		v_{\beta}(y) \ge \frac{h}{\eta} + \frac{2}{\sigma^2}\left[\beta+\frac{\hat{\alpha}}{4}\left(\frac{h}{\eta}\right)^2\right](y-x_1),\quad y\ge x_0, \label{eq:145}
		\end{align}
		where the right hand side tends to $\infty$, completing the proof when $a\leq 0$.

    When $a>0$, we note from (\ref{eq:1}) that \begin{align}
        v_\beta'(y)+\frac{2a}{\sigma^2} v_\beta(y) &= \frac{2\beta}{\sigma^2}+\frac{\hat{\alpha}}{2\sigma^2} v_\beta^2(y)+\eta y\left(v_\beta(y)-\frac{h}{\eta}\right), \quad y\geq x_0.\label{eq:vBetaWhenAPositive}
    \end{align} We let $\epsilon=v_\beta(x_0)-h/\eta>0$ and write from (\ref{eq:vBetaWhenAPositive}) that
    \begin{align}
        v_\beta'(y)+\frac{2a}{\sigma^2} v_\beta(y) &= \frac{2\beta}{\sigma^2}+\frac{\hat{\alpha}}{2\sigma^2}\left(\frac{h}{\eta}\right)+\epsilon\eta y.
    \end{align} Multiplying both sides of this with the integrating factor $\exp\left\{\frac{2a}{\sigma^2}y\right\}$ yields: \[\left(v_\beta(y) \exp\left\{\frac{2a}{\sigma^2}y\right\}\right)'\geq C \exp \left\{\frac{2a}{\sigma^2}y\right\}+\epsilon\eta y \exp\left\{\frac{2a}{\sigma^2}y,\right\}\] where $C=\frac{2\beta}{\sigma^2}+\frac{\hat{\alpha}}{\sigma^2}(h/\eta)^2>0$. Integrating both sides of this on $[x_0, y]$ yields \[v_\beta(y)\geq v_\beta(x_0)+C\left(1-\exp\left\{-\frac{2a}{\sigma^2}(y-x_0)\right\}\right)+\epsilon\eta \frac{\sigma^4}{4a^2}\left(\frac{2a}{\sigma^2}y-1\right), \] where the right-hand side tends to $\infty$ as $y\rightarrow\infty$, completing the proof when $a>0$.
	\end{proof}
	
	\begin{lem}\label{lem:16}
		For $0\le \beta_1<\beta_2$, we have that $v_{\beta_1}(x)<v_{\beta_2}(x)$ for all $x>0$. That is, $v_{\beta}(x)$ is an increasing function of $\beta$ for each $x>0$.
	\end{lem}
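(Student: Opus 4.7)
The plan is to apply a standard ODE comparison argument, exploiting the fact that both $v_{\beta_1}$ and $v_{\beta_2}$ satisfy IVP($\beta$) with the same initial condition $v(0)=-r$ but with different constant terms $\beta_1<\beta_2$ in the right-hand side.

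First, I would let $w(y)=v_{\beta_2}(y)-v_{\beta_1}(y)$ and subtract the ODEs (\ref{eq:1}) for $v_{\beta_2}$ and $v_{\beta_1}$. Using the factorization $v_{\beta_2}^2-v_{\beta_1}^2=(v_{\beta_2}+v_{\beta_1})\,w$, this yields a linear first-order ODE for $w$:
\begin{equation*}
\frac{\sigma^2}{2}\,w'(y)=(\beta_2-\beta_1)+\left[\frac{\hat{\alpha}}{4}\bigl(v_{\beta_2}(y)+v_{\beta_1}(y)\bigr)+\eta y-a\right]w(y),\qquad w(0)=0.
\end{equation*}
The crucial point is that the inhomogeneous forcing term is the strictly positive constant $\beta_2-\beta_1$, while the coefficient multiplying $w$, although not necessarily sign-definite, is continuous and locally bounded (by smoothness of $v_{\beta_1},v_{\beta_2}$ established in Lemma \ref{lem:vUnique}).

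Next, evaluating at $y=0$ gives $w'(0)=\tfrac{2}{\sigma^2}(\beta_2-\beta_1)>0$, so $w(y)>0$ on some interval $(0,\delta)$. I would then argue by contradiction: if $w$ ever returns to zero, let $y^\ast=\inf\{y>0:w(y)=0\}>0$. By continuity, $w(y^\ast)=0$, and since $w>0$ on $(0,y^\ast)$, the one-sided derivative satisfies $w'(y^\ast)\le 0$. But substituting $w(y^\ast)=0$ into the ODE gives $\tfrac{\sigma^2}{2}w'(y^\ast)=\beta_2-\beta_1>0$, a contradiction. Hence $w(y)>0$ for all $y>0$, which is the desired strict inequality.

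I do not anticipate a genuine obstacle here; the argument is essentially the Grönwall-type comparison principle tailored to this specific quadratic-in-$v$ nonlinearity, and the key algebraic cancellation $v_{\beta_2}^2-v_{\beta_1}^2=(v_{\beta_2}+v_{\beta_1})w$ makes the linearization in $w$ immediate. The only mild subtlety is confirming that the candidate minimizing $y^\ast$ is strictly positive, which follows from $w'(0)>0$ together with continuity of $w$.
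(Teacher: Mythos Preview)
Your proposal is correct and follows essentially the same approach as the paper: both argue that $v_{\beta_2}-v_{\beta_1}$ starts positive near zero (via $w'(0)>0$) and then derive a contradiction at the first hypothetical touching point by showing the ODE forces $w'=\tfrac{2}{\sigma^2}(\beta_2-\beta_1)>0$ there. Your formulation via the explicit linear ODE for $w$ is a slightly cleaner packaging of the same argument the paper gives.
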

	\begin{proof}
		Let $\beta_2>\beta_1\ge 0$. We argue by contradiction. Suppose $v_{\beta_1}(x) \ge v_{\beta_2}(x)$ for some $x>0$, and let
		\begin{align*}
		\hat{x}=\inf\left\{x>0: v_{\beta_1}(x) \ge v_{\beta_2}(x)\right\}.
		\end{align*}
		Then there exists a sequence $\left\{x_n\right\}$ that decreases to $\hat{x}$, i.e., $x_n\searrow\hat{x}$ as $n\rightarrow\infty$, such that $v_{\beta_1}(x_n)\ge v_{\beta_2}(x_n)$ for all $n$. Recall that $v_{\beta_1}(0)=v_{\beta_2}(0)=-r$ and $v_{\beta_2}^{\prime}(0)> v_{\beta_1}^{\prime}(0)$. Hence, $v_{\beta_2}>v_{\beta_1}$ in a neighborhood around zero. This and continuity of $v_{\beta_1}$ and $v_{\beta_2}$ imply that
		\begin{align}
		v_{\beta_1}(\hat{x}) = v_{\beta_2}(\hat{x}).\label{eq:146}
		\end{align}
		Consequently, we can write 
		\begin{align*}
		\frac{v_{\beta_1}(x_n) - v_{\beta_1}(\hat{x})}{x_n-\hat{x}}\ge \frac{v_{\beta_2}(x_n) - v_{\beta_2}(\hat{x})}{x_n-\hat{x}},\quad n\ge 1.
		\end{align*}
		Passing to the limit as $n\rightarrow\infty$, we conclude that
		\begin{align}
		v^{\prime}_{\beta_1}(\hat{x})\ge v^{\prime}_{\beta_2}(\hat{x}).\label{eq:147}
		\end{align}
		Note, however, from IVP($\beta$) that for $\beta=\beta_1,\beta_2$ we have
		\begin{align}
		\frac{\sigma^2}{2}v_{\beta_1}^{\prime}\left(\hat{x}\right) &= \beta_1 + \frac{\hat{\alpha}}{4}v_{\beta_1}^2(\hat{x}) + \eta\hat{x}\left(v_{\beta_1}(\hat{x})-\frac{h}{\eta}\right) -av_{\beta_1}(\hat{x}),\label{eq:155}\\
		\frac{\sigma^2}{2}v_{\beta_2}^{\prime}\left(\hat{x}\right) &= \beta_2 + \frac{\hat{\alpha}}{4}v_{\beta_2}^2(\hat{x}) + \eta\hat{x}\left(v_{\beta_2}(\hat{x})-\frac{h}{\eta}\right)-av_{\beta_1}(\hat{x}).\label{eq:156}
		\end{align}
		Subtracting (\ref{eq:155}) from (\ref{eq:156}) and using (\ref{eq:146}) yield
		\begin{align*}
		\frac{\sigma^2}{2}\left[v^{\prime}_{\beta_2}(\hat{x})-v^{\prime}_{\beta_1}(\hat{x})\right] = \beta_2-\beta_1>0,
		\end{align*}
		which contradicts (\ref{eq:147}). Thus, we conclude that $v_{\beta_2}(x)>v_{\beta_1}(x)$ for $x>0$.
	\end{proof}
	
	\begin{lem}\label{lem:17}
		For $x>0$, we have that $v_{\beta}(x)$ is continuous in $\beta$ on $[0,\infty)$. That is, for $x>0$, given $\beta\ge 0$ and $\ep>0$, there exists a $\delta>0$ such that $\vert v_{\beta}(x) - v_{\tilde{\beta}}(x)\vert <\epsilon$ for all $\tilde{\beta}\in \left(\beta-\delta,\beta+\delta\right)\cap [0,\infty)$.
	\end{lem}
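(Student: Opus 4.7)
The plan is to recognize this as a standard continuous-dependence-on-parameter result for ODEs, exploiting the fact that the right hand side of IVP($\beta$) is polynomial in $v$ (hence $\mathcal{C}^\infty$ in $(y,v,\beta)$) and that all solutions under consideration already exist globally on $[0,\infty)$ by Lemma \ref{lem:vUnique}. The only mild complication is obtaining a bound on $v_{\tilde\beta}$ on $[0,x]$ that is uniform for $\tilde\beta$ in a neighborhood of $\beta$, which I will get for free from the monotonicity established in Lemma \ref{lem:16}.

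Fix $\beta\ge 0$ and $x>0$. Restrict attention to $\tilde\beta \in J := [\max\{0,\beta-1\},\beta+1]$. By Lemma \ref{lem:16}, for every $y\in[0,x]$ and every $\tilde\beta\in J$, we have the sandwich
\begin{align*}
v_{\max\{0,\beta-1\}}(y)\;\le\; v_{\tilde\beta}(y)\;\le\; v_{\beta+1}(y),
\end{align*}
so $v_{\tilde\beta}$ is bounded on $[0,x]$ by a constant $M=M(\beta,x)$ independent of $\tilde\beta\in J$. Next I set $w(y)=v_{\tilde\beta}(y)-v_\beta(y)$ and subtract the two instances of IVP($\beta$). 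Using the identity $v_{\tilde\beta}^2-v_\beta^2=(v_{\tilde\beta}+v_\beta)w$, I obtain the linear ODE
\begin{align*}
w'(y) \;=\; \frac{2(\tilde\beta-\beta)}{\sigma^2} \;+\; A(y)\,w(y), \qquad w(0)=0,
\end{align*}
where $A(y)=\tfrac{\hat\alpha}{2\sigma^2}\bigl(v_{\tilde\beta}(y)+v_\beta(y)\bigr)+\tfrac{2\eta y}{\sigma^2}-\tfrac{2a}{\sigma^2}$. By the uniform bound above, $|A(y)|\le K$ on $[0,x]$ for a constant $K=K(\beta,x)$ independent of $\tilde\beta\in J$.

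Solving this linear ODE by the integrating-factor method gives
\begin{align*}
w(y)\;=\;\frac{2(\tilde\beta-\beta)}{\sigma^2}\int_0^{y}\exp\!\left(\int_s^{y} A(r)\,dr\right)ds,\qquad y\in[0,x],
\end{align*}
whence $|w(x)|\le \tfrac{2}{\sigma^2}\,x\,e^{Kx}\,|\tilde\beta-\beta|=:C(\beta,x)\,|\tilde\beta-\beta|$. Given $\varepsilon>0$, it then suffices to choose $\delta=\min\{1,\varepsilon/C(\beta,x)\}$ to ensure $|v_{\tilde\beta}(x)-v_\beta(x)|<\varepsilon$ for all $\tilde\beta\in (\beta-\delta,\beta+\delta)\cap[0,\infty)$, completing the proof.

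The only step that requires any real input from earlier in the paper is the uniform boundedness of $v_{\tilde\beta}$ on $[0,x]$, which is exactly where Lemma \ref{lem:16} is used; everything else is routine linear ODE manipulation. I do not anticipate any genuine obstacle.
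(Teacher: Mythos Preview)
Your proof is correct and follows essentially the same approach as the paper's: both subtract the two instances of IVP($\beta$), use Lemma~\ref{lem:16} to obtain a uniform bound on $v_{\tilde\beta}+v_\beta$ over $[0,x]$, and then derive a Lipschitz estimate $|v_{\tilde\beta}(x)-v_\beta(x)|\le C(\beta,x)\,|\tilde\beta-\beta|$. The only cosmetic difference is that the paper first integrates the ODE over $[0,x]$ and then applies Gronwall's inequality to the resulting integral inequality, whereas you keep the subtracted equation in differential form and solve the linear ODE for $w$ explicitly via an integrating factor; the two routes yield the same exponential bound.
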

	\begin{proof}
		Let $x>0$ and $\beta_2>\beta_1\ge 0$. Integrating IVP$(\beta)$ over $[0,x]$ for $\beta=\beta_1,\beta_2$, we arrive at the following two equations:
		\begin{align}
		\frac{\sigma^2}{2}v_{\beta_1}(x) &= -\frac{\sigma^2}{2}r+\beta_1 x + \frac{\hat{\alpha}}{4}\int_{0}^{x}v_{\beta_1}^2(y)\,dy + \eta\int_{0}^{x}y\left(v_{\beta_1}(y) - \frac{h}{\eta}\right)\,dy -\int_0^x a v_{\beta_1} (y) dy,\label{eq:157}\\
		\frac{\sigma^2}{2}v_{\beta_2}(x) &= -\frac{\sigma^2}{2}r+\beta_2 x + \frac{\hat{\alpha}}{4}\int_{0}^{x}v_{\beta_2}^2(y)\,dy + \eta\int_{0}^{x}y\left(v_{\beta_2}(y) - \frac{h}{\eta}\right)\,dy -\int_0^x a v_{\beta_2} (y) dy.\label{eq:158}
		\end{align}
		Subtracting (\ref{eq:157}) from (\ref{eq:158}) gives the following:
		\begin{align}
		\frac{\sigma^2}{2}\left[v_{\beta_2}(x) - v_{\beta_1}(x)\right] &= \left(\beta_2-\beta_1\right) x + \frac{\hat{\alpha}}{4}\int_{0}^{x}\left[v_{\beta_2}^2(y) - v_{\beta_1}^2(y)\right]\,dy \notag\\
        &\quad  + \eta\int_{0}^{x}y \left[v_{\beta_2}(x) - v_{\beta_1}(x)\right]\,dy -a\int_0^x \left[v_{\beta_1}(y)-v_{\beta_2} (y)\right]\, dy.\label{eq:159}
		\end{align}
		In order to facilitate the bound, let $\bar{\beta}>\beta_2>\beta_1\ge 0$ and note from Lemma \ref{lem:16} that
		\begin{align*}
		v_0(y)\le v_{\beta_1}(y)\le v_{\beta_2}(y) \le v_{\bar{\beta}}(y),\quad y\ge 0.
		\end{align*} 
		Hence for $y\ge 0$ we have that
		\begin{align*}
		2v_0(y) \le v_{\beta_1}(y) + v_{\beta_2}(y) \le 2v_{\bar{\beta}}(y),
		\end{align*}
		from which we conclude that 
		\begin{align*}
		\vert v_{\beta_1}(y) +v_{\beta_2}(y) \vert \le 2 \max\left(\vert v_0(y)\vert + \vert v_{\bar{\beta}}(y)\vert \right).
		\end{align*}
		Thus, letting
		\begin{align*}
		K\left(\bar{\beta}\right) = 2 \sup_{0\le y\le x}\left\{\max\left(\vert v_0(y)\vert + \vert v_{\bar{\beta}}(y)\vert \right)\right\},
		\end{align*}
		we arrive at the following for $y\in [0,x]$:
		\begin{align*}
		\vert v_{\beta_2}^2(y) - v_{\beta_1}^2(y) \vert= \vert v_{\beta_2}(y) + v_{\beta_1}(y)\vert \cdot \vert v_{\beta_2}(y) -v_{\beta_1}(y) \vert \le K\left(\bar{\beta}\right)\vert v_{\beta_2}(y) - v_{\beta_1}(y)\vert. 
		\end{align*}
		Combining this with (\ref{eq:159}) and letting
		\begin{align*}
		h(y) = \vert v_{\beta_2}(y) -v_{\beta_1}(y)\vert\quad \text{for}\quad y\in [0,x],
		\end{align*}
		yield the following inequality:
		\begin{align*}
		h(x) \le  \frac{2x}{\sigma^2}\vert \beta_2-\beta_1\vert +\left[\frac{\hat{\alpha}}{2\sigma^2}K\left(\bar{\beta}\right)+\eta x +|a|\right]\int_{0}^{x}h(y)\,dy.
		\end{align*}
		Then by Gronwall's inequality (e.g., see page 498 of \citet{EthierKurtz05}) 
		we conclude that 
		\begin{align*}
		h(x) \le \frac{2x}{\sigma^2}\vert \beta_2-\beta_1\vert \exp\left\{-\left(\eta x + \frac{\hat{\alpha}}{2\sigma^2}K\left(\bar{\beta}\right) +|a|\right)x\right\}.
		\end{align*}
		Thus, given $\ep>0$, we can let 
		\begin{align*}
		\delta = \frac{\ep\sigma^2}{2x}\exp\left\{-\left(\eta x + \frac{\hat{\alpha}}{2\sigma^2}K\left(\bar{\beta}\right) +|a| \right)x\right\},
		\end{align*}
		so that $\vert \beta_2-\beta_1\vert<\delta$ implies that $h(x) = \vert v_{\beta_2}(x) -v_{\beta_1}(x)\vert<\ep$. This concludes the proof.
	\end{proof}
	
	\begin{lem}\label{lem:18}
		Under Assumption \ref{ass:ParamAssumption}, we have the following: \begin{enumerate}[label=(\roman*)]
            \setlength{\itemsep}{0em}
		    \item In Case 1 of Assumption \ref{ass:ParamAssumption} (Assumption \ref{ass:ParamAssumption}(a)), for $0\leq \beta_1<\beta_2$, if $\beta_2\in {\cal D}_1$, then $\beta_1\in {\cal D}_1$. That is, $[0,\beta_2]\subseteq {\cal D}_1$ whenever $\beta_2\in {\cal D}_1$.
            \item In Case 2 of Assumption \ref{ass:ParamAssumption} (Assumption \ref{ass:ParamAssumption}(b)), for $\underline{\beta}_2 < \beta_1<\beta_2$, if $\beta_2\in {\cal D}_2$, then $\beta_1\in {\cal D}_2$. That is, $(\underline{\beta}_2,\beta_2]\subseteq {\cal D}_2$ whenever $\beta_2\in {\cal D}_2$.
		\end{enumerate}
	\end{lem}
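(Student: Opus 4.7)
The plan is to reduce the proof to two earlier results: the characterization of $\mathcal{D}_i$ supplied by Lemma \ref{lem:14}, and the strict monotonicity of $v_\beta$ in $\beta$ established in Lemma \ref{lem:16}. Recall that Lemma \ref{lem:14} states $\beta \in \mathcal{D}_i$ if and only if there exists some $x > 0$ with $v_\beta(x) < -r$, while Lemma \ref{lem:16} gives $v_{\beta_1}(x) < v_{\beta_2}(x)$ for every $x > 0$ whenever $0 \leq \beta_1 < \beta_2$. Together these two facts collapse the argument into a short implication chain, so I would handle parts (i) and (ii) by a single unified argument.

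Concretely, I would proceed as follows. Fix $i \in \{1,2\}$ and let $\beta_1 < \beta_2$ be as in the corresponding hypothesis, and assume $\beta_2 \in \mathcal{D}_i$. By Lemma \ref{lem:14}, there exists some $x_0 > 0$ such that $v_{\beta_2}(x_0) < -r$. Evaluating Lemma \ref{lem:16} at the point $x_0$ gives $v_{\beta_1}(x_0) < v_{\beta_2}(x_0) < -r$. Applying the reverse direction of Lemma \ref{lem:14} to the parameter $\beta_1$, we conclude that $\beta_1 \in \mathcal{D}_i$, which is exactly the desired inclusion.

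The only item requiring care is verifying that $\beta_1$ lies in the admissible parameter regime required by the two invoked lemmas in each case. In Case 1 of Assumption \ref{ass:ParamAssumption}, the statement of Lemma \ref{lem:18}(i) presupposes $\beta_1 \geq 0$, so we are within the setting of Assumption \ref{ass:ParamAssumption}(a) and both auxiliary lemmas apply directly. In Case 2, the hypothesis of Lemma \ref{lem:18}(ii) places $\beta_1 > \underline{\beta}_2$; combined with the Remark following Assumption \ref{ass:ParamAssumption} that $\underline{\beta}_2 \geq 0$ under Assumption \ref{ass:ParamAssumption}(b), this gives $\beta_1 \geq 0$, so Lemma \ref{lem:16} applies without modification, and Lemma \ref{lem:14}(ii) is invocable at both $\beta_1$ and $\beta_2$. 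I anticipate no real obstacle here: once Lemmas \ref{lem:14} and \ref{lem:16} are available, the statement is essentially immediate, and the only genuine work is the brief parameter bookkeeping above.
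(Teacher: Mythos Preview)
Your proposal is correct and follows essentially the same argument as the paper: invoke Lemma \ref{lem:14} to obtain an $x_0>0$ with $v_{\beta_2}(x_0)<-r$, apply the strict monotonicity of Lemma \ref{lem:16} at $x_0$, and conclude $\beta_1\in\mathcal{D}_i$ via Lemma \ref{lem:14} again. Your additional parameter bookkeeping for Case 2 (noting $\underline{\beta}_2\ge 0$ so that Lemma \ref{lem:16} applies) is slightly more explicit than the paper's ``follows similarly,'' but the core logic is identical.
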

	\begin{proof}
		Consider part (i), and let $\beta_2>\beta_1\ge 0$. Then by Lemma \ref{lem:14}, there exists $x_0>0$ such that $v_{\beta_2}(x_0)<-r$. In turn, by Lemma \ref{lem:16}, we have that
		\begin{align*}
		v_{\beta_1}(x_0)<v_{\beta_2}(x_0)<-r,
		\end{align*}
		Thus, $\beta_1\in\mathcal{D}_1$ by Lemma \ref{lem:14}. Proof of part (ii) follows similarly.
	\end{proof}
	
	\begin{lem}\label{lem:19}
		Under Assumption \ref{ass:ParamAssumption}, we have the following: \begin{enumerate}[label=(\roman*)]
            \setlength{\itemsep}{0em}
		    \item In Case 1 of Assumption \ref{ass:ParamAssumption} (Assumption \ref{ass:ParamAssumption}(a)), ${\cal D}_1\not=\emptyset$. In particular, $0\in {\cal D}_1$ and there exists a $\Tilde{\beta}_1>0$ such that $[0, \Tilde{\beta}]\subseteq {\cal D}_1$. 
            \item In Case 2 of Assumption \ref{ass:ParamAssumption} (Assumption \ref{ass:ParamAssumption}(b)), ${\cal D}_2\not=\emptyset$. In particular, there exists $\Tilde{\beta}_2>\underline{\beta}_2$ such that $(\underline{\beta}_2, \Tilde{\beta}_2] \subseteq {\cal D}_2$. 
		\end{enumerate}
	\end{lem}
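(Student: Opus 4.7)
The plan is to exhibit, in each case, a specific value of $\beta$ for which the IVP solution satisfies $v_\beta(x_0) < -r$ at some $x_0 > 0$, which places that $\beta$ in $\mathcal{D}_i$ via Lemma \ref{lem:14}, and then use Lemma \ref{lem:17} (continuity of $v_\beta$ in $\beta$) together with Lemma \ref{lem:18} (monotonicity of $\mathcal{D}_i$) to inflate this singleton into the subinterval claimed.

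For part (i) under Case 1, the goal is first to show $0 \in \mathcal{D}_1$. I would establish that $v_0(y) < 0$ for all $y \geq 0$: since $v_0(0) = -r < 0$, if this failed there would be a smallest $y^{\ast} > 0$ with $v_0(y^{\ast}) = 0$, so $v_0^{\prime}(y^{\ast}) \geq 0$; however, plugging $\beta = 0$ and $v_0(y^{\ast}) = 0$ into Equation (\ref{eq:1}) produces $v_0^{\prime}(y^{\ast}) = -2 h y^{\ast}/\sigma^2 < 0$, a contradiction. Next, I would show that $v_0$ is unbounded below: if $v_0(y) \geq M$ for all $y$, then $v_0 \in [M,0]$ gives $v_0^2 \leq M^2$ and $\eta y v_0 \leq 0$, whence Equation (\ref{eq:1}) yields
\[
\tfrac{\sigma^2}{2}\, v_0^{\prime}(y) \;\leq\; \tfrac{\hat{\alpha}}{4}\, M^2 + |a|\,|M| - h y,
\]
and integrating this bound forces $v_0(y) \to -\infty$, contradicting $v_0 \geq M$. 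Hence there exists $x_0 > 0$ with $v_0(x_0) < -r$, and Lemma \ref{lem:14} gives $0 \in \mathcal{D}_1$. By Lemma \ref{lem:17}, $v_\beta(x_0) < -r$ for $\beta$ in a neighborhood of $0$, so some $\tilde{\beta}_1 > 0$ lies in $\mathcal{D}_1$, and Lemma \ref{lem:18}(i) extends this to $[0, \tilde{\beta}_1] \subseteq \mathcal{D}_1$.

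For part (ii) under Case 2, I would analyze $v_{\underline{\beta}_2}$ near $y = 0$ by Taylor expansion. Substituting $\beta = \underline{\beta}_2 = -ar - \hat{\alpha} r^2/4$ and $v(0) = -r$ into Equation (\ref{eq:1}) gives $v_{\underline{\beta}_2}^{\prime}(0) = 0$. Differentiating (\ref{eq:1}) and evaluating at $y = 0$ with $v_{\underline{\beta}_2}(0) = -r$ and $v_{\underline{\beta}_2}^{\prime}(0) = 0$ yields $v_{\underline{\beta}_2}^{\prime\prime}(0) = -2(\eta r + h)/\sigma^2 < 0$, so Taylor's theorem gives
\[
v_{\underline{\beta}_2}(x) \;=\; -r - \tfrac{\eta r + h}{\sigma^2}\, x^2 + o(x^2) \;<\; -r
\]
for all sufficiently small $x > 0$. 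Fixing such an $x_0$ and applying Lemma \ref{lem:17} at the admissible value $\underline{\beta}_2 \geq 0$, the inequality $v_\beta(x_0) < -r$ persists on a one-sided neighborhood $(\underline{\beta}_2, \underline{\beta}_2 + \delta)$, which places each such $\beta$ in $\mathcal{D}_2$ via Lemma \ref{lem:14}. Lemma \ref{lem:18}(ii) then stretches this into $(\underline{\beta}_2, \tilde{\beta}_2] \subseteq \mathcal{D}_2$.

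The main obstacle is proving unboundedness of $v_0$ in part (i), where one must combine the preliminary sign constraint $v_0 \leq 0$ with the linearly growing negative term $-hy$ in order to dominate the bounded contributions $\hat{\alpha} v_0^2/4$ and $-a v_0$ on the right-hand side of the ODE; the sign control on $v_0$ is essential, since without it the term $\eta y v_0$ could be large positive and derail the estimate. Part (ii) is comparatively direct, since the degeneracy $v_{\underline{\beta}_2}^{\prime}(0) = 0$ together with a strictly negative second derivative at $0$ immediately forces $v_{\underline{\beta}_2}$ below $-r$, and the continuity argument handles the rest.
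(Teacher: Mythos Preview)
Your proposal is correct. Part (i) is essentially the paper's argument, reorganized: the paper argues by contradiction assuming $0\in\mathcal{I}_1$ and then splits into the cases $v_0\le 0$ everywhere versus $v_0>0$ somewhere, whereas you first prove $v_0<0$ outright (via the same first-zero contradiction) and then derive unboundedness directly from an assumed lower bound $M$; the estimates and the continuity/monotonicity wrap-up via Lemmas \ref{lem:17} and \ref{lem:18} are identical.

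Part (ii) is where you take a genuinely shorter route. The paper, after computing $v_{\underline{\beta}_2}'(0)=0$ and $v_{\underline{\beta}_2}''(0)<0$, goes on to prove that $v_{\underline{\beta}_2}(x)\le -r$ for \emph{all} $x\ge 0$ and then that $v_{\underline{\beta}_2}(x)\to-\infty$, finally picking $x_2$ with $v_{\underline{\beta}_2}(x_2)<-2r$ before invoking continuity in $\beta$. You observe that none of this extra work is needed for the lemma as stated: the second-order Taylor expansion at $0$ already yields a single point $x_0>0$ with $v_{\underline{\beta}_2}(x_0)<-r$, and since Lemma \ref{lem:17} gives continuity at $\underline{\beta}_2\ge 0$, this inequality persists for $\beta$ slightly above $\underline{\beta}_2$, after which Lemmas \ref{lem:14} and \ref{lem:18}(ii) finish the job. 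Your route is more economical; the paper's longer argument does establish the additional global fact $v_{\underline{\beta}_2}\to-\infty$, but that fact is not used elsewhere.
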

	\begin{proof}
		Consider part (i). We first show $0\in {\cal D}_1$. Aiming for a contradiction,  suppose  $0\not\in\mathcal{D}_1$ so that $0\in\mathcal{I}_1$ by Corollary \ref{cor:SetsPartition}. We consider the following two cases:\vspace{-0.25em}
		\begin{itemize}
			\itemsep0em
			\item Case A: $v_0(y)\le 0$ for all $y>0$.
			\item Case B: $v_0(y)>0$ for some $y>0$.\vspace{-0.25em}
		\end{itemize} 
        Consider Case A.
		Because $0\in\mathcal{I}_1$, $v^{\prime}_0(y)\ge 0$ for all $y\ge 0$. Then, we have that 
		$-r\le v_0(y)\le 0$ for all $y\ge 0$.
		Substituting this into IVP($\beta$) for $\beta=0$, we consider the following two subcases of Case A: $a\geq 0$ and $a\in (-\frac{\alpha r}{4}$, 0).

        For $a\geq 0$, we conclude that
		\begin{align*}
		0\le \frac{\sigma^2}{2}v_0^{\prime}(y) \le \frac{\hat{\alpha}}{4}r^2 - hy + ar,
		\end{align*}
		where the right-hand side tends to $-\infty$. Thus, there exists $y>0$ such that $v_0^{\prime}(y)<0$, contradicting $0\in\mathcal{I}_1$.

        For $a\in (-\frac{\alpha r}{4}, 0)$, we conclude that \begin{align*}
		0\le \frac{\sigma^2}{2}v_0^{\prime}(y) \le \frac{\hat{\alpha}}{4}r^2 - hy,
		\end{align*} where the right-hand side tends to $-\infty$. Once again, there exists $y>0$ such that $v_0'(y)<0$, contradicting $0\in {\cal I}_1$. 
  
        Consider Case B. In this case, we let
		$y_0 = \inf\left\{y>0: v_0(y)>0\right\}$.
		By continuity of $v_0$ and $v_0(0)=-r<0$, we have that $v_0(y_0)=0$ and $y_0>0$. Substituting this into IVP($\beta$) for $\beta=0$ at $y=y_0$ gives 
		\begin{align*}
		\frac{\sigma^2}{2}v_0^{\prime}(y_0)=-hy_0<0.
		\end{align*}
		Thus, $0\in\mathcal{D}_1$ by Lemma \ref{lem:14}, a contradiction. Combining Cases A and B, we conclude that $0\in\mathcal{D}_1$. Then it follow from Lemma \ref{lem:14} that $v_0(y)\rightarrow -\infty$ as $y\rightarrow\infty$. Thus, there exists a $x_0>0$ such that $v_0(x_0)<-2r$. Then, by continuity of $v_{\beta}(x_0)$ in $\beta$ (see Lemma \ref{lem:17}), there exists a $\tilde{\beta}_1>0$ such that $v_{\tilde{\beta}}(x_0)<-r$. By Lemma \ref{lem:14}, we conclude $\Tilde{\beta}_1\in {\cal D}_1$. Then we conclude by Lemma \ref{lem:18} that $[0,\tilde{\beta}_1]\subseteq \mathcal{D}_1$.

  Consider part (ii). Recall that in Case 2 of Assumption \ref{ass:ParamAssumption}, $a\leq -\hat{\alpha}r/4$ and $\underline{\beta}_2=-ar-\hat{\alpha} r^2/4\geq 0$. Consider $v_{\beta_2}$ and note that $v_{\beta_2}(0)=-r$. It follows from (\ref{eq:1}) that $v_{\beta_2}'(0)=0$. Moreover, differentiating both sides of (\ref{eq:1}) and using $v_{\beta_2}'(0)=0$, we conclude that \[v_{\underline{\beta}_2}''(0) = -\frac{2\eta}{\sigma^2}\left(r+\frac{h}{\eta}\right)<0.\]
  Thus, $v_{\underline{\beta}_2}$ is decreasing and below $-r$ in a neighborhood of zero. Next, we argue that $v_{\underline{\beta}_2}(x)\leq -r$ for all $x>0$.
  
  Suppose not, and let $x_1={\rm inf}\{x>0:v_{\underline{\beta}_2}(x)-r\}$. By continuity of $v_\beta$, we have $v_{\underline{\beta}_2}(x_1)=-r$. We also have by its definition that $v_{\underline{\beta}_2}'(x_1)\geq 0$ and $x_1>0$. Then by combining these with (\ref{eq:1}), we write \begin{align*}
      0\leq v_{\underline{\beta}_2}'(x_1) &= -ar -\frac{\hat{\alpha}}{4}r^2+\frac{\hat{\alpha}}{4}r^2-\eta x_1\left(r+\frac{h}{\eta}\right)+ar   \\
      &= -\eta x_1 \left(r+\frac{h}{\eta}\right) < 0, 
  \end{align*} a contradiction. Thus, $v_{\underline{\beta}_2}(x)\leq -r$ for all $x\geq 0$. 
  
  Next, we argue that $\lim_{x\rightarrow\infty} v_{\underline{\beta}_2}(x)=-\infty$. Suppose not (Note that we can rule out oscillatory behavior following the same technique in the proof of Lemma \ref{lem:vIncreaseSup}). Then, there exists $k>r$ such that \[v_{\underline{\beta}_2}(x)\geq -k, \quad x\geq 0.\] But using (\ref{eq:1}), we conclude that \[\frac{\sigma^2}{2} v_{\underline{\beta}_2}'(x) \leq \beta_2+\frac{\hat{\alpha}}{4}K^2-\eta x\left(r+\frac{h}{\eta}\right)-ar.\] Integrating both sides from $0$ to $y$ yields \[\frac{\sigma^2}{2} v_{\underline{\beta}_2}(y) \leq -r\frac{\sigma^2}{2}+\left[\underline{\beta}_2-ar+\frac{\hat{\alpha}}{4}\right]y-\frac{\eta}{2}\left(r+\frac{h}{\eta}\right)\frac{y^2}{2},  \] where the right-hand side tends to $-\infty$ as $y\rightarrow\infty$. Thus, $v_{\underline{\beta}_2}(x)\rightarrow-\infty$ as $x\rightarrow\infty$ and there exists $x_2$ such that $v_{\underline{\beta}_2}(x_2) <-2r$. Then, by Lemma \ref{lem:16}, there exists $\Tilde{\beta}_2>\underline{\beta}_2$ such that $v_{\Tilde{\beta}_2}(x_2)<-r$. In particular, $\Tilde{\beta}_2\in {\cal D}_2$ by Lemma \ref{lem:14}. Then, by Lemma \ref{lem:18}, we conclude that $(\underline{\beta}_2, \Tilde{\beta}_2]\subset {\cal D}_2$.
	\end{proof}
	
	\begin{lem}\label{lem:20}
		Under Assumption \ref{ass:ParamAssumption}, we have ${\cal I}_i\not=\emptyset$ for $i=1,2$. In particular, \[\left(\frac{\sigma^2 h}{2\eta}+ 2\sigma\left(r+\frac{h}{\eta}\right)\sqrt{\frac{\eta}{\pi}} \exp\left\{-\frac{\sigma^2 a^2}{4\eta}\right\}, \infty\right)\subseteq \mathcal{I}_i, \quad i=1,2.\]
	\end{lem}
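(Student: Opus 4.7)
The plan is as follows. By Corollary~\ref{cor:SetsPartition}, to prove $\beta\in\mathcal{I}_i$ it suffices to show that $\beta\notin\mathcal{D}_i$, and by the chain of equivalences in Lemma~\ref{lem:14} this reduces to verifying $v_\beta(y)\geq -r$ for every $y\geq 0$. I would establish this via a comparison with a linear ODE (obtained by dropping the nonnegative quadratic term in (\ref{eq:1})) and then convert the resulting criterion into an explicit threshold on $\beta$ using Gaussian integral estimates. The argument is identical in Cases~1 and 2 of Assumption~\ref{ass:ParamAssumption}, the only difference being which $\underline{\beta}_i$ plays the role of a lower bound on admissible $\beta$; I would treat both simultaneously.

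First, since $\hat{\alpha}v_\beta^2/4\geq 0$, equation (\ref{eq:1}) yields the linear differential inequality $v_\beta'(y)-\tfrac{2(\eta y-a)}{\sigma^2}v_\beta(y)\geq \tfrac{2(\beta-hy)}{\sigma^2}$. I would introduce the integrating factor $\mu(y)=\exp\{(2ay-\eta y^2)/\sigma^2\}$, which satisfies $\mu(0)=1$, to rewrite this as $(\mu v_\beta)'(y)\geq (2/\sigma^2)\mu(y)(\beta-hy)$. Integrating from $0$ to $y$ and comparing against the solution of the linearized IVP gives the pointwise bound $v_\beta(y)\geq u(y)$, where $u(y)=\mu(y)^{-1}\bigl[-r+(2/\sigma^2)\int_0^y\mu(s)(\beta-hs)\,ds\bigr]$. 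Using the identity $\mu(y)-1=-(2/\sigma^2)\int_0^y\mu(s)(\eta s-a)\,ds$, this rearranges into $u(y)+r=(2/(\sigma^2\mu(y)))\int_0^y\mu(s)\bigl[(\beta+ar)-(h+r\eta)s\bigr]\,ds$. Because the bracket is positive for small $s$ and negative for large $s$, the integrand changes sign exactly once, so the integral is unimodal in $y$; consequently $u(y)\geq -r$ on all of $[0,\infty)$ if and only if $\int_0^\infty \mu(s)\bigl[(\beta+ar)-(h+r\eta)s\bigr]\,ds\geq 0$.

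Second, completing the square shows $\mu(y)=\exp\{a^2/(\eta\sigma^2)\}\exp\{-\eta(y-a/\eta)^2/\sigma^2\}$, exhibiting $\mu$ as a constant multiple of a Gaussian density centered at $a/\eta$. A direct moment computation gives $\int_0^\infty s\mu(s)\,ds=(a/\eta)M+\sigma^2/(2\eta)$ with $M=\int_0^\infty\mu(s)\,ds$, which collapses the scalar condition above to $\beta\geq ah/\eta+\sigma^2(h+r\eta)/(2\eta M)$. Applying a Gaussian half-line tail lower bound for $M$ that is uniform in the sign of $a$ then converts this into the explicit sufficient condition $\beta>\tfrac{\sigma^2 h}{2\eta}+2\sigma(r+h/\eta)\sqrt{\eta/\pi}\exp\{-\sigma^2 a^2/(4\eta)\}$ stated in the lemma. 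Combining with Lemma~\ref{lem:14} and Corollary~\ref{cor:SetsPartition} then yields $\beta\in\mathcal{I}_i$.

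The main obstacle is this last step: extracting the precise threshold requires a Gaussian tail bound on $M$ that is valid for both signs of $a$. When $a\geq 0$ the peak lies inside $[0,\infty)$ and at least half of the Gaussian mass contributes to $M$; when $a<0$ the peak lies outside the integration region and a Mills-ratio-type inequality must be used to produce the exponential factor in the threshold. A secondary routine check, needed only in Case~2 of Assumption~\ref{ass:ParamAssumption}, is that the stated threshold exceeds $\underline{\beta}_2=-ar-\hat{\alpha}r^2/4$ so that the inclusion is nonvacuous; this follows by elementary algebra because the threshold is strictly positive and dominates $-ar$ in the regime $a\leq -\hat{\alpha}r/4$.
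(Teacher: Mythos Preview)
Your approach is essentially the paper's: both drop the nonnegative quadratic term to obtain a linear differential inequality, apply the Gaussian integrating factor, and reduce membership in $\mathcal{I}_i$ to a scalar integral inequality on $\beta$, the only cosmetic difference being that you close via the criterion $v_\beta(y)\ge -r$ for all $y$ (negating Lemma~\ref{lem:14}(iii)) whereas the paper shows the resulting lower bound tends to $+\infty$ (negating Lemma~\ref{lem:14}(iv)). The Gaussian half-line estimate you flag as the main obstacle is exactly where the paper does its explicit work, computing the integrals $I_1,I_2$ and then invoking the crude bound $\Phi(\cdot)-\Phi(\cdot)\ge \tfrac14$ for large $x$ to extract the stated threshold.
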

	\begin{proof}
		We establish the result by showing that $v_{\beta}(x)\rightarrow\infty$ as $x\rightarrow\infty$ for sufficiently large $\beta>0$. The result then follows from Corollary \ref{cor:SetsPartition} and Lemmas \ref{lem:14} and \ref{lem:16}. To that end, we rewrite IVP($\beta$) as follows: 
		\begin{align*}
		v_{\beta}^{\prime}(y) -\frac{2\eta}{\sigma^2}yv_{\beta}(y) +a v_{\beta}(y) = \frac{2\beta}{\sigma^2} + \frac{\hat{\alpha}}{2\sigma^2}v_{\beta}^2(y) - \frac{2h}{\sigma^2}y,\quad y\ge 0.
		\end{align*}
		Multiplying both sides with the integrating factor $\exp\left\{-\frac{\eta}{\sigma^2}y^2 +ay \right\}$ yields the following bound:
		\begin{align*}
		\left[\exp\left\{-\frac{\eta}{\sigma^2}y^2 +ay\right\}v_{\beta}(y)\right]^{\prime}\ge \frac{2\beta}{\sigma^2}\exp\left\{-\frac{\eta}{\sigma^2}y^2 + ay\right\} - \frac{2h}{\sigma^2}y\exp\left\{-\frac{\eta}{\sigma^2}y^2 + ay \right\}.
		\end{align*}
		Integrating both sides of the above inequality over $[0,x]$ and using $v_{\beta}(0)=-r$ gives:
		\begin{align}
		\exp\left\{-\frac{\eta}{\sigma^2}x^2 +ax\right\}v_{\beta}(x)\ge -r+\frac{2\beta}{\sigma^2}I_1 - \frac{2h}{\sigma^2} I_2,\label{eq:122*}
		\end{align} where \begin{align*}
		    I_1 &= \int_0^x \exp\left\{-\frac{\eta}{\sigma^2}y^2+ay\right\} dy \text{ and }
            I_2 = \int_0^x y \exp\left\{-\frac{\eta}{\sigma^2}y^2+ay\right\} dy.
		\end{align*}
		First, we consider $I_1$ and write \[I_1 = \exp\left\{\frac{\sigma^2}{4\eta} a^2\right\} \int_0^x \exp\left\{-\frac{\eta}{\sigma^2}\left(y-\frac{a\sigma^2}{2\eta}\right)^2\right\} dy.\] Applying the change of variable $u=\frac{\sqrt{2\eta}}{\sigma}(y-\frac{\sigma^2}{2\eta})$ yields
		\begin{align}
		I_1 &= \sqrt{\frac{\pi}{\eta}} \sigma \exp\left\{\frac{\sigma^2}{4\eta} a^2\right\}\int_{ -\frac{\sigma}{\sqrt{2\eta}}}^{\frac{\sqrt{2\eta}}{\sigma}(x-\frac{\sigma^2}{2\eta})} \frac{1}{\sqrt{2\pi}} \exp\left\{-\frac{u^2}{2}\right\}\,du \nonumber   \\
        &=\sqrt{\frac{\pi}{\eta}}\sigma \exp\left\{\frac{\sigma^2}{4\eta}a^2\right\} \left[\Phi\left(\frac{\sqrt{2\eta}}{\sigma}\left(x-\frac{\sigma^2}{2\eta}\right)\right)-\Phi \left(-\frac{\sigma}{\sqrt{2\eta}}\right)\right],\label{eq:I1}
		\end{align}
		where $\Phi$ is the CDF for the standard normal distribution. Next, we turn to $I_2$ and facilitate its derivative by first deriving \begin{align*}
		    I_3 &= \int_0^x \left(y-\frac{\sigma^2 a}{2\eta}\right) \exp\left\{-\frac{\eta}{\sigma^2}y^2+ay\right\} dy.
		\end{align*} Note that $I_3=I_2-\frac{\sigma^2 a}{2\eta}I_1$. Using the change of variable $u=-\frac{\eta}{\sigma^2}y^2+ay$, we write \begin{align*}
		    I_3 &= \int_0^{-\frac{\eta}{\sigma^2}x^2+ax} -\frac{\sigma^2}{2u}e^u du = \frac{\sigma^2}{2\eta} \left[1-\exp\left\{-\frac{\eta}{\sigma^2}x^2+ax\right\}\right].
		\end{align*} Then, using $I_2=I_3+\frac{a\sigma^2}{2\eta}I_1$, we arrive at 
		\begin{align} 
		I_2 =\frac{\sigma^2}{2\eta}-\frac{\sigma^2}{2\eta}\exp\left\{-\frac{\eta}{\sigma^2}x^2+ax\right\}+\frac{\sigma^2 a^2}{2\eta}\exp\left\{\frac{a^2\sigma^2}{4\eta}\right\}\sqrt{\frac{\pi}{\eta}}\sigma \left[\Phi\left(\frac{\sqrt{2\eta}}{\sigma}x-\frac{\sigma}{\sqrt{2\eta}}\right)-\Phi\left(-\frac{\sigma}{\sqrt{2\eta}}\right)\right].\label{eq:I2}
		\end{align} 
		Substituting (\ref{eq:I1})--(\ref{eq:I2}) into (\ref{eq:122*}) then gives 
		\begin{align}
		\exp\left\{-\frac{\eta}{\sigma^2}x^2 +ax\right\}v_{\beta}(x) &\geq -r-\frac{h}{\eta}+\frac{h}{\eta}\exp\left\{-\frac{\eta}{\sigma^2}x^2+ax\right\} \nonumber \\
        &\quad +\frac{2\beta}{\sigma^2}\sqrt{\frac{\pi}{\eta}} \exp \left\{\frac{\sigma^2}{4\eta}a^2\right\} \left[\Phi\left(\frac{\sqrt{2\eta}}{\sigma}x-\frac{\sigma}{\sqrt{2\eta}}\right)-\Phi \left(-\frac{\sigma}{\sqrt{2\eta}}\right)\right] \nonumber  \\
        &\quad -\sigma\frac{h}{\eta} \exp\left\{\frac{a^2 \sigma^2}{4\eta}\right\}\sqrt{\frac{\pi}{\eta}}\left[\Phi\left(\frac{\sqrt{2\eta}}{\sigma}x-\frac{\sigma}{\sqrt{2\eta}}\right)-\Phi\left(-\frac{\sigma}{\sqrt{2\eta}}\right)\right]. \label{eq:SubstituteIs}
		\end{align}
        Note that there exists $x_0>0$ large enough so that \begin{align} \label{eq:LargeX}
            \Phi\left(\frac{\sqrt{2\eta}}{\sigma}x-\frac{\sigma}{\sqrt{2\eta}}\right)-\Phi\left(-\frac{\sigma}{\sqrt{2\eta}}\right) &\geq \frac{1}{4}.
        \end{align} Then for $x\geq x_0$ and $\beta>\frac{\sigma^2 h}{2\eta}$, combining (\ref{eq:SubstituteIs}) and (\ref{eq:LargeX}), we write, \begin{align*}
            \exp\left\{-\frac{\eta}{\sigma^2}x^2+ax\right\}v_{\beta}(x) \geq -r+\left(\frac{2\beta}{\sigma}-\frac{\sigma h}{\eta}\right)\exp\left\{\frac{\sigma^2 a^2}{4\eta}\right\}\sqrt{\frac{\pi}{\eta}}\frac{1}{\eta}-\frac{h}{\eta} +\frac{h}{\eta} \exp\left\{-\frac{\eta}{\sigma^2}x^2+ax\right\}.
        \end{align*} 
		Thus, we have the following lower bound on $v_\beta(\cdot)$: 
		\begin{align}
		v_{\beta}(x)\ge\left[\frac{1}{4}\left(\frac{2\beta}{\sigma}-\frac{\sigma h}{\eta}\right)\exp\left\{\frac{\sigma^2 a^2}{4\eta}\right\}\sqrt{\frac{\pi}{\eta}} - \left(r+\frac{h}{\eta}\right)\right]\exp\left\{\frac{\eta}{\sigma^2}x^2 +ax\right\} +\frac{h}{\eta},\quad x>x_0.\label{eq:126*}
		\end{align}
		In particular, we note that for $\beta>\frac{\sigma^2 h}{2\eta}+2\sigma\left(r+\frac{h}{\eta}\right)\sqrt{\frac{\eta}{\pi}}\exp\{-\frac{\sigma^2 a^2}{4\eta}\}$, the right-hand side of (\ref{eq:126*}) tends to $\infty$ as $x\rightarrow\infty$. Thus, $\beta\in\mathcal{I}_i$ for $i=1,2$ whenever it is above $\frac{\sigma^2 h}{2\eta}+2\sigma (r+\frac{h}{\eta})\sqrt{\frac{\eta}{\pi}}\exp\{-\frac{\sigma^2 a^2}{4\eta}\}$, completing the proof.
	\end{proof}
	
	To facilitate the analysis, under Case $i$ of Assumption \ref{ass:ParamAssumption}, we define $\beta_i^*= \inf\mathcal{I}_i$ for $i=1,2$. The remaining results will prove that this $\beta_i^*$ along with its corresponding $v_{\beta_i^*}$, solve the Bellman equation in Case $i$ for $i=1,2$.
	
	\begin{lem}\label{lem:21}
		Under Case $i$ of Assumption \ref{ass:ParamAssumption}, we have that $\beta_i^*>0$ for $i=1,2$.
	\end{lem}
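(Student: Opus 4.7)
The plan is to deduce the strict positivity of $\beta_i^*$ directly from the interval-type containment established in Lemma \ref{lem:19} together with the partitioning property in Corollary \ref{cor:SetsPartition}. The observation driving everything is that since $\beta_i^* = \inf \mathcal{I}_i$ and since $\mathcal{I}_i$ and $\mathcal{D}_i$ are disjoint on their common domain, any interval of $\beta$-values known to lie in $\mathcal{D}_i$ at the left end of that domain forces $\beta_i^*$ to lie strictly to the right of that interval.

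In Case 1 (Assumption \ref{ass:ParamAssumption}(a)), Lemma \ref{lem:19}(i) guarantees the existence of some $\tilde{\beta}_1 > 0$ with $[0, \tilde{\beta}_1] \subseteq \mathcal{D}_1$. Corollary \ref{cor:SetsPartition}(i) says that $\mathcal{I}_1$ and $\mathcal{D}_1$ partition $[0, \infty)$, so $\mathcal{I}_1 \cap [0, \tilde{\beta}_1] = \emptyset$, and hence $\mathcal{I}_1 \subseteq (\tilde{\beta}_1, \infty)$. Taking the infimum yields $\beta_1^* \geq \tilde{\beta}_1 > 0$.

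In Case 2 (Assumption \ref{ass:ParamAssumption}(b)), the argument is analogous but over the domain $(\underline{\beta}_2, \infty)$. Lemma \ref{lem:19}(ii) provides $\tilde{\beta}_2 > \underline{\beta}_2$ with $(\underline{\beta}_2, \tilde{\beta}_2] \subseteq \mathcal{D}_2$, and Corollary \ref{cor:SetsPartition}(ii) asserts that $\mathcal{I}_2$ and $\mathcal{D}_2$ partition $(\underline{\beta}_2, \infty)$. So $\mathcal{I}_2 \subseteq (\tilde{\beta}_2, \infty)$, giving $\beta_2^* \geq \tilde{\beta}_2$. Since $\underline{\beta}_2 \geq 0$ in Case 2 (recall the remark after Assumption \ref{ass:ParamAssumption}), we have $\tilde{\beta}_2 > \underline{\beta}_2 \geq 0$, so $\beta_2^* > 0$.

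There is no real obstacle here: all the analytic work has already been done in Lemmas \ref{lem:12}--\ref{lem:19}. The only point requiring a bit of care is making sure that the relevant left endpoint of each domain actually belongs to (the closure of) $\mathcal{D}_i$ with a positive buffer --- that is precisely why Lemma \ref{lem:19} is stated with strict inequality $\tilde{\beta}_i > \underline{\beta}_i$ rather than merely $\tilde{\beta}_i \geq \underline{\beta}_i$, and that strictness is what converts $\beta_i^* \geq \underline{\beta}_i$ into $\beta_i^* > 0$ in both cases.
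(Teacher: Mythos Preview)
Your proposal is correct and follows essentially the same approach as the paper: both arguments use Lemma \ref{lem:19} to obtain $\tilde{\beta}_i>0$ lying in $\mathcal{D}_i$, then use disjointness of $\mathcal{I}_i$ and $\mathcal{D}_i$ (Corollary \ref{cor:SetsPartition}) to conclude $\beta_i^*=\inf\mathcal{I}_i\ge\tilde{\beta}_i>0$. Your version is slightly more explicit in separating the two cases and in spelling out why $\tilde{\beta}_2>0$ via $\underline{\beta}_2\ge 0$, but the substance is the same.
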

	\begin{proof}
		Recall from Lemma \ref{lem:19} that there exists a $\tilde{\beta}_i>0$ such that $\Tilde{\beta}_i\in {\cal D}_i$ for $i=1,2$. Clearly, we must have $\beta\ge \tilde{\beta}_i$ for $\beta\in\mathcal{I}_i$ and $i=1,2$. Thus, we conclude that $\beta_i^*=\inf\mathcal{I}_i\ge\tilde{\beta}_i>0$ for $i=1,2$.
	\end{proof}
	
	\begin{lem}\label{lem:22}
		Under Case $i$ of Assumption \ref{ass:ParamAssumption}, We have that $\beta_i^*\in\mathcal{I}_i$ and $v_{\beta_i^\ast}$ is bounded for $i=1,2$. 
	\end{lem}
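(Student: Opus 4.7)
The plan is to exploit continuity of $v_\beta(x)$ in $\beta$ (Lemma \ref{lem:17}), together with the partition ${\cal I}_i\cup{\cal D}_i$ of the admissible $\beta$-domain (Corollary \ref{cor:SetsPartition}), the downward closure of ${\cal D}_i$ (Lemma \ref{lem:18}), and the four equivalent characterizations in Lemma \ref{lem:14}. The guiding intuition is that membership in ${\cal D}_i$ is an \emph{open} condition on $\beta$, since by Lemma \ref{lem:14} it is detected by the strict inequality $v_\beta(x_0)<-r$ at some single finite $x_0$; likewise, unboundedness of $v_\beta$ is an open condition, since by Lemma \ref{lem:vLimit} it is detected by $v_\beta(x_0)>h/\eta$ at a finite $x_0$. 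Thus the critical value $\beta_i^*=\inf{\cal I}_i$ should satisfy neither open condition, which is exactly what we need.

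To show $\beta_i^*\in{\cal I}_i$, I argue by contradiction. If $\beta_i^*\in{\cal D}_i$, Lemma \ref{lem:14} gives $v_{\beta_i^*}(x)\to-\infty$, so pick $x_0>0$ with $v_{\beta_i^*}(x_0)<-2r$. By Lemma \ref{lem:17}, there is $\delta>0$ so that $v_\beta(x_0)<-r$ for every admissible $\beta$ with $|\beta-\beta_i^*|<\delta$; Lemma \ref{lem:14} then places all such $\beta$ in ${\cal D}_i$. In particular $\beta_i^*+\delta/2\in{\cal D}_i$, and Lemma \ref{lem:18} forces the whole admissible interval up to $\beta_i^*+\delta/2$ into ${\cal D}_i$. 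This excludes any element of ${\cal I}_i$ in $[\beta_i^*,\beta_i^*+\delta/2)$, contradicting $\beta_i^*=\inf{\cal I}_i$ (by definition of infimum, elements of ${\cal I}_i$ approach $\beta_i^*$ from above).

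To show $v_{\beta_i^*}$ is bounded, note that $\beta_i^*\in{\cal I}_i$ means $v_{\beta_i^*}$ is nondecreasing, so it suffices to rule out $v_{\beta_i^*}(x)\to\infty$. Suppose for contradiction this holds; then there is $x_0>0$ with $v_{\beta_i^*}(x_0)>h/\eta$, and Lemma \ref{lem:17} yields $\delta>0$ such that $v_\beta(x_0)>h/\eta$ for all admissible $\beta\in(\beta_i^*-\delta,\beta_i^*]$. Lemma \ref{lem:21} guarantees that this left neighborhood actually contains admissible $\beta<\beta_i^*$ (since $\beta_i^*>0$ in Case 1 and $\beta_i^*>\underline{\beta}_2$ in Case 2, the latter following from Lemma \ref{lem:19} giving $\beta_i^*\ge\tilde{\beta}_2>\underline{\beta}_2$). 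For such $\beta$, Lemma \ref{lem:vLimit} gives $v_\beta(y)\to\infty$, and in particular $v_\beta$ is not eventually decreasing, so Lemma \ref{lem:14} excludes $\beta\in{\cal D}_i$; hence $\beta\in{\cal I}_i$, contradicting $\beta<\beta_i^*=\inf{\cal I}_i$.

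The main subtlety will be careful bookkeeping around the left endpoint of the admissible $\beta$-domain---namely $0$ in Case 1 and $\underline{\beta}_2$ in Case 2---so that the two-sided neighborhoods of $\beta_i^*$ produced by continuity actually intersect the admissible set on both sides; this is precisely what Lemma \ref{lem:21} secures. Beyond that, the argument is a routine ``boundary of the good set'' contradiction, driven entirely by the monotonicity (Lemma \ref{lem:16}) and continuity (Lemma \ref{lem:17}) of $\beta\mapsto v_\beta(x_0)$ at a well-chosen $x_0$.
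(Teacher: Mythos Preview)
Your proposal is correct and follows essentially the same approach as the paper's proof: both parts argue by contradiction using the continuity of $\beta\mapsto v_\beta(x_0)$ (Lemma \ref{lem:17}) at a well-chosen $x_0$, together with Lemma \ref{lem:14} for membership in ${\cal D}_i$ and Lemma \ref{lem:vLimit} for unboundedness. The only cosmetic differences are that in the first part the paper reaches the contradiction more directly (it picks $\hat\beta_i\in(\beta_i^*,\beta_i^*+\delta)\cap{\cal I}_i$ by the definition of infimum and notes this contradicts $v_{\hat\beta_i}(x_0)<-r$, without detouring through Lemma \ref{lem:18}), and in the second part you are slightly more careful than the paper in justifying that the left neighborhood of $\beta_i^*$ contains admissible $\beta$ in Case~2 (invoking Lemma \ref{lem:19} to get $\beta_2^*>\underline{\beta}_2$, not merely $\beta_2^*>0$).
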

	\begin{proof}
        Consider Case $i$ of Assumption \ref{ass:ParamAssumption} for $i=1,2$. We argue by contradiction. Suppose $\beta_i^*\not\in \mathcal{I}_i$. Then, by Corollary \ref{cor:SetsPartition}, $\beta_i^*\in\mathcal{D}_i$. In particular, by Lemma \ref{lem:14}, there exists a $x_0>0$ such that $v_{\beta_i^*}(x)<-r$. Because $v_{\beta}(x_0)$ is continuous in $\beta$ (see Lemma \ref{lem:17}), there exists a $\delta>0$ such that 
		\begin{align}
		v_{\beta}(x_0)<-r\quad\text{for}\quad \beta\in\left(\beta_i^*-\delta, \beta_i^*+\delta\right).\label{eq:122}
		\end{align}
		However, by definition of $\beta_i^*$, there exists a $\hat{\beta}_i\in \left(\beta_i^*, \beta_i^*+\delta\right)$ such that $\hat{\beta}_i\in\mathcal{I}_i$. Applying Lemma \ref{lem:14} again, it follows that $v_{\hat{\beta}_i}(x)\ge -r$ for all $x\ge 0$, contradicting (\ref{eq:122}). Thus, $\beta_i^*\in\mathcal{I}_i$. 
		
		We now prove that $v_{\beta_i^*}$ is bounded. Aiming for a contradiction, suppose it is not bounded. Then there exists a $x_0>0$ such that $v_{\beta_i^*}(x_0)>2h/\eta$. Then, because $v_{\beta}(x_0)$ is continuous in $\beta$ (by Lemma \ref{lem:17}) and $\beta_i^*>0$ (by Lemma \ref{lem:21}), there exists an $\ep>0$ such that $v_{\beta_i^*-\ep}(x_0)\ge h/\eta$. It follows that $v_{\beta_i^*-\ep}$ is unbounded by Lemma \ref{lem:vLimit}, which in turn implies that $\beta_i^*-\ep\in\mathcal{I}_i$ by Corollary \ref{cor:SetsPartition} and Lemma \ref{lem:13}. That $\beta_i^*-\ep\in\mathcal{I}_i$, however, contradicts the definition of $\beta_i^*$.
	\end{proof}
	
	\begin{lem}\label{lem:23}
        Under Assumption \ref{ass:ParamAssumption}, the following hold: \begin{enumerate}[label=(\roman*)]
            \item ${\cal D}_1=[0, \beta^*_1)$ and ${\cal I}_1=[\beta_1^*, \infty)$, 
            \item ${\cal D}_2=( \underline{\beta}_2, \beta_2^*)$ and ${\cal I}_2=[\beta_2^*, \infty)$.
        \end{enumerate}
	\end{lem}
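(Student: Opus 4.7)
The plan is to combine the partition statement from Corollary \ref{cor:SetsPartition}, the monotonicity of $\mathcal{D}_i$ with respect to $\beta$ from Lemma \ref{lem:18}, and the fact that $\beta_i^\ast \in \mathcal{I}_i$ established in Lemma \ref{lem:22}, together with the definition $\beta_i^\ast = \inf \mathcal{I}_i$. The argument reduces to showing each of the two claimed set equalities by a pair of containments, and the only work involved is some careful case bookkeeping; no new analytic estimates are required.

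I would prove the two parts in parallel since the logic is identical, differing only in the underlying domain ($[0,\infty)$ for $i=1$ versus $(\underline{\beta}_2,\infty)$ for $i=2$). First, I would show $[\beta_i^\ast, \infty) \subseteq \mathcal{I}_i$. By Lemma \ref{lem:22}, $\beta_i^\ast \in \mathcal{I}_i$. For any $\beta > \beta_i^\ast$, suppose toward contradiction that $\beta \in \mathcal{D}_i$. Then applying Lemma \ref{lem:18} with $\beta_1 = \beta_i^\ast < \beta = \beta_2$ gives $\beta_i^\ast \in \mathcal{D}_i$, contradicting $\beta_i^\ast \in \mathcal{I}_i$ (since $\mathcal{I}_i$ and $\mathcal{D}_i$ are disjoint by Corollary \ref{cor:SetsPartition}). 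Hence $\beta \in \mathcal{I}_i$ by the same corollary.

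Next, I would show the reverse containment and deduce the description of $\mathcal{D}_i$. For any $\beta$ in the relevant domain with $\beta < \beta_i^\ast$, the definition $\beta_i^\ast = \inf \mathcal{I}_i$ forces $\beta \notin \mathcal{I}_i$, and thus $\beta \in \mathcal{D}_i$ by Corollary \ref{cor:SetsPartition}. This yields $[0, \beta_1^\ast) \subseteq \mathcal{D}_1$ and $(\underline{\beta}_2, \beta_2^\ast) \subseteq \mathcal{D}_2$. Combined with the previous step and the partition property, we obtain $\mathcal{I}_1 = [\beta_1^\ast, \infty)$, $\mathcal{D}_1 = [0, \beta_1^\ast)$, and analogously $\mathcal{I}_2 = [\beta_2^\ast, \infty)$, $\mathcal{D}_2 = (\underline{\beta}_2, \beta_2^\ast)$.

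There is no real obstacle here; the lemma is essentially a structural summary of what has been accumulated, and the main thing to be careful about is invoking Lemma \ref{lem:18} in the correct direction (it says $\mathcal{D}_i$ is closed downward, so $\mathcal{I}_i$ is closed upward) and ensuring in Case 2 that all $\beta$ under consideration lie in $(\underline{\beta}_2,\infty)$, which is automatic since $\beta_2^\ast > \underline{\beta}_2$ by Lemma \ref{lem:21} (together with $\beta_2^\ast \geq \tilde{\beta}_2 > \underline{\beta}_2$ from Lemma \ref{lem:19}).
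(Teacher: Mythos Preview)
Your proposal is correct and follows essentially the same approach as the paper's proof: both argue by contradiction that no $\beta > \beta_i^\ast$ can lie in $\mathcal{D}_i$ (via Lemma~\ref{lem:18} and Lemma~\ref{lem:22}), and then invoke the definition of $\beta_i^\ast$ as $\inf \mathcal{I}_i$ together with the partition from Corollary~\ref{cor:SetsPartition} to conclude. Your write-up is somewhat more explicit about the two containments, but the logic is identical.
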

	\begin{proof}
        Consider Case $i$ of Assumption \ref{ass:ParamAssumption}.
		Suppose that there exists a $\beta>\beta_i^*$ such that $\beta\in\mathcal{D}_i$. Then by Lemma \ref{lem:18} it follows that $\beta_i^*\in\mathcal{D}_i$, contradicting Lemma \ref{lem:22}. Hence, no such $\beta$ exists. Combining this with Lemma \ref{lem:22} and the definition of $\beta_i^*$ concludes the proof.
	\end{proof}
	
	\begin{lem}\label{lem:24}
        Under Case $i$ of Assumption \ref{ass:ParamAssumption}, 
		we have that $v_{\beta_i^*}$ is nondecreasing with $\lim\limits_{x\rightarrow\infty}v_{\beta_i^*}(x) = h/\eta$ for $i=1,2$.
	\end{lem}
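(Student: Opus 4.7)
The plan is to combine the structural facts already established about $v_{\beta_i^*}$ with a direct differential-inequality argument. From Lemma \ref{lem:23} we have $\beta_i^*\in \mathcal{I}_i$, so by definition of $\mathcal{I}_i$ the function $v_{\beta_i^*}$ is nondecreasing on $(0,\infty)$. From Lemma \ref{lem:22} it is also bounded. Being monotone and bounded, it has a finite limit
\[
v^\ast := \lim_{x\to\infty} v_{\beta_i^*}(x),
\]
and so the task reduces to showing $v^\ast = h/\eta$. I would do this by ruling out $v^\ast>h/\eta$ and $v^\ast<h/\eta$ separately.

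For the upper bound $v^\ast \le h/\eta$: suppose $v^\ast > h/\eta$. Then in particular there is some $x_0>0$ with $v_{\beta_i^*}(x_0) \ge h/\eta$, and Lemma \ref{lem:vLimit} yields $v_{\beta_i^*}(x)\to\infty$, contradicting the boundedness from Lemma \ref{lem:22}. Hence $v^\ast \le h/\eta$.

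For the lower bound $v^\ast \ge h/\eta$: aiming for a contradiction, suppose $v^\ast < h/\eta$ and set $\epsilon = h/\eta - v^\ast >0$. Since $v_{\beta_i^*}$ is nondecreasing with limit $v^\ast$, we have $v_{\beta_i^*}(y) \le v^\ast$ for all $y\ge 0$, which gives $\eta y\bigl(v_{\beta_i^*}(y) - h/\eta\bigr) \le -\eta\epsilon y$. Combined with the uniform bound $|v_{\beta_i^*}(y)| \le K := \max\{r,|v^\ast|\}$, the initial value problem (\ref{eq:1}) yields
\[
\frac{\sigma^2}{2} v_{\beta_i^*}^{\prime}(y) \;\le\; \beta_i^* + \frac{\hat{\alpha}}{4} K^2 + |a|K - \eta\epsilon y,\qquad y\ge 0.
\]
The right-hand side tends to $-\infty$ as $y\to\infty$, so $v_{\beta_i^*}^{\prime}(y) < 0$ for all $y$ sufficiently large, contradicting the monotonicity of $v_{\beta_i^*}$. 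Therefore $v^\ast = h/\eta$, completing the proof.

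The main obstacle is making sure the two sides of the argument really do use only what has been proven: the upper bound depends on Lemma \ref{lem:vLimit} applying without requiring strict inequality (reading its statement, it does apply whenever $v_{\beta_i^*}(x_0)\ge h/\eta$), and the lower-bound argument depends on the fact that the limit (not just some finite value) exists, which in turn rests on $\beta_i^*\in\mathcal{I}_i$ and Lemma \ref{lem:22}. Once these ingredients are in place, the differential-inequality estimate is a short calculation and no delicate asymptotics or new analytic machinery are needed.
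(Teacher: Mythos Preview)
Your proposal is correct and follows essentially the same route as the paper's own proof: both establish monotonicity and boundedness via Lemmas \ref{lem:22}--\ref{lem:23}, use Lemma \ref{lem:vLimit} together with boundedness to obtain the upper bound $v^\ast\le h/\eta$, and then rule out $v^\ast<h/\eta$ by showing the ODE forces $v_{\beta_i^*}^{\prime}$ to be negative for large $y$. The only cosmetic difference is that the paper writes this last step as ``passing to the limit'' in IVP($\beta_i^*$) to conclude $\lim v_{\beta_i^*}^{\prime}=-\infty$, whereas you derive the explicit differential inequality; your version is arguably a bit cleaner since it does not presuppose that $\lim v_{\beta_i^*}^{\prime}$ exists.
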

	\begin{proof}
        Consider Case $i$ of Assumption \ref{ass:ParamAssumption} for $i=1,2$. 
		Because $\beta_i^*\in\mathcal{I}_i$ by Lemma \ref{lem:22}, $v_{\beta_i^*}$ is nondecreasing. Also, by Lemma \ref{lem:22} we have that $v_{\beta_i^*}$ is bounded. Consequently, by Lemma \ref{lem:vLimit}, we have that
		\begin{align*}
		v_{\beta_i^*}(x) \leq \frac{h}{\eta}\quad \text{for}\quad x\geq 0.
		\end{align*}
		Moreover, because $v_{\beta_i^*}$ is nondecreasing, its limit is well-defined and satisfies
		\begin{align*}
		\lim\limits_{x\rightarrow\infty}v_{\beta_i^*}(x) \le \frac{h}{\eta}.
		\end{align*}
		Now let $v = \lim\limits_{x\rightarrow\infty}v_{\beta_i^*}(x)$ and suppose that $v<\frac{h}{\eta}$. Consider IVP($\beta_i^*$):
		\begin{align*}
		\frac{\sigma^2}{2}v_{\beta_i^*}^{\prime}(y) = \beta_i^* + \frac{\hat{\alpha}}{4}v_{\beta^*}^2(y) +\eta y\left(v_{\beta_i^*}(y) - \frac{h}{\eta}\right) -a v_{\beta_i^*}(y), \quad y\geq 0.
		\end{align*}
		Passing to the limit on both sides and noting that $v<\frac{h}{\eta}$ gives the following:
		\begin{align*}
		\frac{\sigma^2}{2}\lim\limits_{y\rightarrow\infty}v_{\beta_i^*}^{\prime}(y) = \beta_i^*+ \frac{\hat{\alpha}}{4}v^2 -av+\lim\limits_{y\rightarrow\infty}\eta y\left(v_{\beta_i^*}(y) - \frac{h}{\eta}\right) = -\infty.
		\end{align*}
		Thus, there exists a $x_0>0$ such that $v_{\beta_i^*}^{\prime}(x_0)<0$. We conclude by Lemma \ref{lem:14} that $\beta_i^*\in\mathcal{D}_i$, a contradiction. Therefore, \begin{align*}
		    v = \lim\limits_{x\rightarrow\infty}v_{\beta_i^*}(x)=h/\eta.
		\end{align*}
	\end{proof}
	We conclude this section with a proof of Theorem \ref{thm:1}:
	
	\begin{proof}[Proof of Theorem \ref{thm:1}]
		First, consider the case $a>-\frac{\hat{\alpha}}{4}r$ that is covered by Case 1 of Assumption \ref{ass:ParamAssumption} (Assumption \ref{ass:ParamAssumption}(a)). In this case, $(\beta_1^*, v_{\beta_1^*})$ solves Equations (\ref{eq:1})--(\ref{eq:2}) and this solution in unique by Lemma \ref{lem:vUnique}. Moreover, by Lemma \ref{lem:24}, we have that $\lim\limits_{x\rightarrow\infty}v_{\beta_1^*}(x) = h/\eta$. Finally, by Lemma \ref{lem:21}, we have that $\beta_1^*>0$. Therefore, $\left(\beta_1^*, v_{\beta_1^*}\right)$ solves the Bellman equations (\ref{eq:6.5*})--(\ref{eq:6.6*}) in this case. When $a\leq -\frac{\hat{\alpha}}{4}r$, Case 2 of Assumption \ref{ass:ParamAssumption} applies, and the proof follows from the same steps as in the first case. 
	\end{proof}
	
	\section{Proposed Policy}\label{sec:8}
	In this section we propose a dynamic pricing and dispatch policy for the problem introduced in Section \ref{sec:3} by interpreting the solution of the equivalent workload formulation (\ref{eq:5.12*})--(\ref{eq:ProcessL}) in the context of the original control problem. To describe the policy, recall that we considered a sequence of systems indexed by the number of jobs $n$, whose formal limit was the Brownian control problem (\ref{eq:3.16})--(\ref{eq:3.22}) under diffusion scaling. To articulate the proposed policy, we fix the system parameter $n$ and use it to unscale processes of interest. We define the (unscaled) workload process $W^n=\left\{W^n(t),\,t\ge 0\right\}$ as follows: 
	$$W^n(t) = \sum_{i=1}^{I}Q^n_i(t)\quad\text{for}\quad t\ge 0.$$ 
		
	\noindent\textbf{Proposed Pricing Policy:} Given the workload process $W^n$, we choose the demand rates
	\begin{align*}
	\lambda_i^n(t) = n\lambda_i^* +\frac{\sqrt{n}}{2\alpha_i}v\left(\frac{W^n(t)}{\sqrt{n}}\right),\quad i=1,\dots, I,\quad t\ge 0,
	\end{align*}
	where $v$ is the solution to the Bellman equation (\ref{eq:6.5*})--(\ref{eq:6.6*}). This follows from Equations (\ref{eq:41*}) and (\ref{eq:6.7}), Lemma \ref{lem:5}, and Theorem \ref{thm:2}. The corresponding proposed pricing policy is given by 
	\begin{align}
	p_i^n(t) = \Lambda_i^{-1}\left(\lambda_i^*\right) + \frac{\left(\Lambda_i^{-1}\right)^{\prime}\left(\lambda_i^*\right)}{2\alpha_i\sqrt{n}}v\left(\frac{W^n(t)}{\sqrt{n}}\right),\quad i=1,\dots, I,\quad t\ge 0,\label{eq:138*}
	\end{align}
	where $\Lambda^{-1}_i$ is the inverse of the demand rate function for region $i$. Equation (\ref{eq:138*}) is derived in Appendix \ref{app:A}.\vspace{0.5em}
	
	\noindent\textbf{Proposed Dispatch Policy:} We propose two dispatch policies and refer to them as Dispatch Policy 1 (DP1) and Dispatch Policy 2 (DP2). Dispatch Policy 1 (DP1) is motivated by the following observation. In the Brownian control problem under the complete resource pooling assumption, we set all but one of the inventory levels to zero. (The buffer with nonzero inventory corresponds to the one with lowest holding cost.) However, as articulated in \citet{Harrison1996}, zero inventory in the Brownian control problem corresponds to small positive inventory levels in the original system. Thus, we put small safety stocks in the various buffers and only serve them when inventory levels are at or above the threshold. To that end, denote by $s_i$ the safety stock for buffer $i$. 
	
	To be more specific, letting $\bar{\mathcal{A}}_i = \mathcal{A}_i \cap \left\{1,\dots, b\right\}$ denote the set of basic activities undertaken by server $i$ and letting $\bar{\mathcal{C}}_i = \mathcal{C}_i\cap \left\{1,\dots, b\right\}$ denote the set of basic activities that serve buffer $i$, our proposed dispatch policy is as follows: If server $i$ becomes idle at time $t$, it serves a job from the buffer in $\left\{b(j):j\in\bar{\mathcal{A}}_i,\, Q^n_{b(j)}(t)\ge s_{b(j)} \right\}$ with largest holding cost $h_{b(j)}$. In words, when server $i$ becomes idle, it looks at all buffers it servers by means of basic activities and serves the buffer with largest holding cost that is above its safety stock. To complete the policy description, suppose that at time $t$ the inventory in buffer $i$ increases from $s_i -1$ to $s_i $, i.e., reaches the safety stock. The system manager serves buffer $i$ by an idle server in $\left\{s(j):j\in\bar{\mathcal{C}_i}\right\}$ with largest effective idling cost $c_{s(j)}/\lambda^*_{s(j)}$, see Equation (\ref{eq:76}). In words, when buffer $i$ reaches the safety stock, i.e., that buffer becomes eligible for service, the system manager selects an idle server with largest effective idling cost than can serve the buffer by means of a basic activity.

    Dispatch Policy 2 (DP2) is motivated by the maximum pressure policy, see for example \citet{Stolyar2004},  \citet{DaiLin2005}, \citet{DaiLin2008}, and \citet{AtaLin2008}. Under this policy, each server prioritizes his own (local) buffer. If his own buffer is empty, then he checks the other buffers that he can serve using basic activities. If there are multiple such buffers, the server works on the buffer with the largest queue length. If the server's own (local) buffer is empty and he cannot serve any other buffers using basic activities, then he considers all remaining buffers he can serve (using nonbasic activities) and works next on the buffer with the largest queue length.  
	
	\section{Simulation Study}\label{sec:SimulationStudy}

 This section presents a simulation study to illustrate the effectiveness of the proposed policy. The simulation setting and its parameters are motivated, albeit loosely, by the taxi market in Manhattan, see \citet{AtaBarjestehKumar_Empirical} and the references therein. We set the number of cars, i.e., the system parameter, as $n=10,000$. As done in \citet{AtaBarjestehKumar_Empirical}, we divide Manhattan into $I=4$ regions, see Figure \ref{fig:Manhattan}. 
 \begin{figure}[h!]
     \centering
     \includegraphics[scale=0.5]{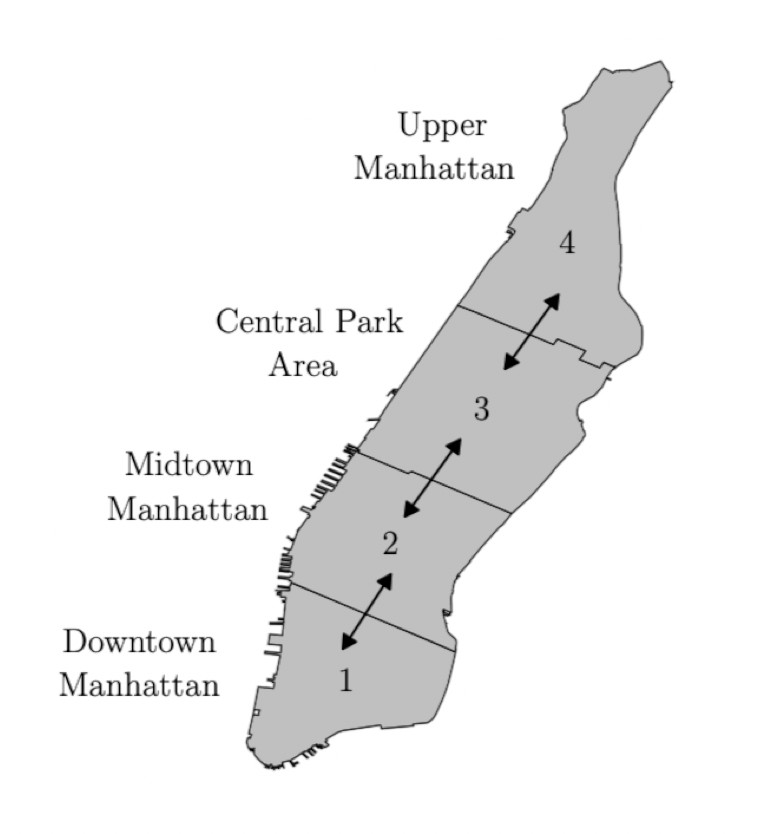}
     \caption{Manhattan area that are partitioned in four regions.}
     \label{fig:Manhattan}
 \end{figure}
 
 We assume cars can pick up customers in their own regions as well as from the neighboring regions. This gives rise to the following capacity consumption matrix: \begin{align*}
        A &=\begin{bmatrix}
	1 & 0 & 0 & 0 & 1 & 0 & 0 & 0 & 0 & 0 \\
	0 & 1 & 0 & 0 & 0 & 1 & 1 & 0 & 0 & 0 \\
	0 & 0 & 1 & 0 & 0 & 0 & 0 & 1 & 1 & 0 \\
    0 & 0 & 0 & 1 & 0 & 0 & 0 & 0 & 0 & 1
	\end{bmatrix}.
    \end{align*} Using the same dataset in \citet{AtaBarjestehKumar_Empirical}, we set\footnote{For simplicity, we use the preliminary results from \citet{AtaBarjestehKumar_Empirical} to estimate $\lambda^n$ and $q$ (based on a four-year dataset from January 2010 to December 2013). In doing so, we focus on the day shift of the non-holiday weekdays.} the demand rate (per hour) vector as follows: \[\lambda^n=(\lambda_1^n,\lambda_2^n,\lambda_3^n,\lambda_4^n)'=(3678,10723,6792,345)'.\] The corresponding limiting rate vector $\lambda^*$ is then computed as $\lambda^*=\lambda^n/n$, which yields 
    \begin{align}
        \lambda^*=(\lambda_1^*,\lambda_2^*,\lambda_3^*,\lambda_4^*)'=(0.367,1.072,0.679,0.0345)'. \label{eq:LimitingLambda}
    \end{align} Using this and Equation (\ref{eq:1.7}), we derive the input-output matrix $R$ as follows: \begin{align*}
        R = \begin{bmatrix}
	\lambda_1^* & 0 & 0 & 0 & 0 & \lambda_2^* & 0 & 0 & 0 & 0 \\
	0 & \lambda_2^* & 0 & 0 & \lambda_1^* & 0 & 0 & \lambda_3^* & 0 & 0 \\
	0 & 0 & \lambda_3^* & 0 & 0 & 0 & \lambda_2^* & 0 & 0 & \lambda_4^* \\
    0 & 0 & 0 & \lambda_4^* & 0 & 0 & 0 & 0 & \lambda_3^* & 0
	\end{bmatrix}.
    \end{align*} \citet{AtaBarjestehKumar_Empirical} reports the mean travel time as 13.2 minutes. To account for the pick up time and for other inefficiences that are not incorporated in our model, we inflate this by a factor of two, and set the mean trip time to 26.4 minutes. Thus $\eta^n=2.2727$ per hour. Moreover, because we study the system under the heavy traffic assumption (Assumption \ref{ass:3}), we set $\eta=e'\lambda^*=2.1539$. Therefore, we have that $\hat{\eta}=\sqrt{n}(\eta^n-\eta)=11.88$. 
    
    We estimate the routing probability vector $q$ from the data as  \[q=(q_1,q_2,q_3,q_4)'=(0.1647,0.5408,0.2724,0.0221)',\] which yields the limiting arrival rate vector $\nu$ to various buffers as follows: 
    \[\nu=\eta q=(0.3529,0.1159,0.5837,0.0474)'.\] Thus using the data $A, R$, and $\gamma$, one can compute the unique nominal processing plan $x^*$, referred to in Assumption \ref{ass:3}. It is displayed in Figure \ref{fig:TaxiNetwork_StaticSolution}.
    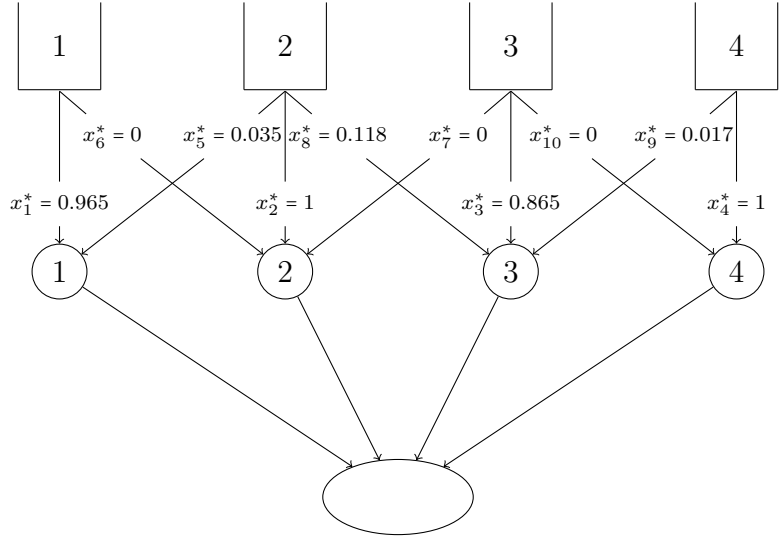
\begin{figure}[h!]
		\centering
        \begin{tikzpicture}
            \node[three sided] (b_1) at (0,3) {$1$}; 
            \node[three sided] (b_2) at (3,3) {$2$}; 
            \node[three sided] (b_3) at (6,3) {$3$}; 
            \node[three sided] (b_4) at (9,3) {$4$}; 
            \node[shape=circle, draw] (s_1) at (0,0) {$1$}; 
            \node[shape=circle, draw] (s_2) at (3,0) {$2$}; 
            \node[shape=circle, draw] (s_3) at (6,0) {$3$}; 
            \node[shape=circle, draw] (s_4) at (9,0) {$4$}; 
            \node[shape=ellipse, minimum width=2cm, minimum height=1cm, draw] (inf_s) at (4.5, -3) {$ $};
            \node (x1) at (0,0.9) {{\scriptsize $x_1^*=0.965$}};
            \node (x2) at (3,0.9) {{\scriptsize $x_2^*=1$}};
            \node (x3) at (6,0.9) {{\scriptsize $x_3^*=0.865$}};
            \node (x4) at (9,0.9) {{\scriptsize $x_4^*=1$}};
            \node (x5) at (2.3,1.8) {{\scriptsize $x_5^*=0.035$}};
            \node (x6) at (0.7,1.8) {{\scriptsize $x_6^*=0$}};
            \node (x7) at (5.3,1.8) {{\scriptsize $x_7^*=0$}};
            \node (x8) at (3.7,1.8) {{\scriptsize $x_8^*=0.118$}};
            \node (x9) at (8.3,1.8) {{\scriptsize $x_9^*=0.017$}};
            \node (x10) at (6.7,1.8) {{\scriptsize $x_{10}^*=0$}};
            \draw[->] (s_1) -- (inf_s);
            \draw[->] (s_2) -- (inf_s);
            \draw[->] (s_3) -- (inf_s);
            \draw[->] (s_4) -- (inf_s);
            \draw[->] ([yshift=-0.6cm]b_1.center) -- (x1) -- (s_1);
            \draw[->] ([yshift=-0.6cm]b_2.center) -- (x2) -- (s_2);
            \draw[->] ([yshift=-0.6cm]b_3.center) -- (x3) -- (s_3);
            \draw[->] ([yshift=-0.6cm]b_4.center) -- (x4) -- (s_4);
            \draw[->] ([yshift=-0.6cm]b_1.center) -- (x6) -- (s_2);
            \draw[->] ([yshift=-0.6cm]b_2.center) -- (x5) -- (s_1);
            \draw[->] ([yshift=-0.6cm]b_2.center) -- (x8) -- (s_3);
            \draw[->] ([yshift=-0.6cm]b_3.center) -- (x7) -- (s_2);
            \draw[->] ([yshift=-0.6cm]b_3.center) -- (x10) -- (s_4);
            \draw[->] ([yshift=-0.6cm]b_4.center) -- (x9) -- (s_3);
        \end{tikzpicture}
    \caption{Unique solution $x^*\in\mathbb{R}^{10}$ to the static problem from Equations (\ref{eq:1.13})--(\ref{eq:1.16}). We see that Activities 6,7, and 10 are nonbasic while the rest are basic.}
    \label{fig:TaxiNetwork_StaticSolution}
	\end{figure}
 Having characterized $x^*$, we next compute the drift parameter $a$ and the variance parameter $\sigma^2$ of the Brownian motion $\chi(\cdot)$, see Equation (\ref{eq:EWF_W}). To this end, first note that the drift vector $\gamma$ and the covariance matrix $\Sigma$ of the Brownian motion $B(\cdot)$ (see Equations (\ref{eq:3.17}), (\ref{eq:DriftCovariance}), and (\ref{eq:EWF})) are given as follows: \begin{align*}
     \gamma &= \hat{\eta}' q = (1.9566, 6.4247,3.2361,0.2625)', \text{ and}   \\
     \Sigma &= \begin{bmatrix}
         0.7097 & 0.1918 & 0.0966 & 0.0078   \\
         0.1918 & 2.3302 & 0.3173 & 0.0257  \\
         0.0966 & 0.3173 & 1.1742 & 0.0130   \\
         0.0078 & 0.0257 & 0.0130 &	0.0937   
     \end{bmatrix}.
 \end{align*}
 Thus, we have that $a=e'\gamma=11.88$ and $\sigma^2=e'\Sigma e=5.6125$. 

 Next, we describe the economic primitives of our example: the demand function, and its associated profit function, the holding cost rates and the cost of idleness. We assume that the demand function is linear. That is, \[\Lambda_i(p_i) = a_i-b_ip_i \quad \text{for}\quad p_i\in [0,\frac{a_i}{b_i}] \text{ and }i=1,\ldots,4,  \] where $a_i,b_i>0$ are constants. Also, its inverse is given by \begin{align*}
	\Lambda_i^{-1}(\lambda_i) =\frac{a_i-\lambda_i}{b_i},\quad \lambda_i\in [0,a_i],\quad i=1,\ldots,4.
\end{align*}
The profit function then follows from Equation (\ref{eq:1.10}) as follows: \begin{align*}
	\pi(\lambda) = \sum_{i=1}^{4} \frac{\lambda_i}{b_i}\left(a_i-\lambda_i\right) ,\quad \lambda_i\in [0, a_i],\quad i=1,\ldots,4.
	\end{align*}
 We set the optimal static price as $p_i^*=10$ for all region $i$, which is about the average price of a ride in the data, see \citet{AtaBarjestehKumar_Empirical}. Also, recall that the limiting demand rate vector $\lambda^*=(\lambda_1^*,\ldots, \lambda_4^*)$ is given by (\ref{eq:LimitingLambda}). We crucially assume that these are the optimal demand rate and the prices. This is equivalent to assuming $a_i=2\lambda_i^*$ and $b^*=\lambda_i^*/p_i$ for $i=1,\ldots,4$. Namely, we set \begin{align*}
     a &= 2\lambda^* = (0.7356,2.1446,1.3584,0.0691)',  \\
     b &= \lambda^*/p^* = (0.0367,0.1072,0.0679,0.0035)'.
 \end{align*} Given these we compute the parameter $\alpha_i$ as $\alpha_i=-(\Lambda_i^{-1})'(\lambda_i^*)-(\lambda_i^*/2)(\Lambda_i^{-1})''(\lambda_i^*)=1/b_i$ for $i=1,\ldots,4$. Thus, we obtain $\alpha=(27.18,9.32,14.72,289.55)$ and $\hat{\alpha}=\sum_{i=1}^4 1/\alpha_i=0.2154$.
    
     \citet{AtaBarjestehKumar_Empirical} suggest that the holding cost when taxis are traveling is $h_0^n=1$ dollars per hour (which can be derived from their fuel cost estimates). To estimate the holding cost rates for other buffers, we consider the driver's opportunity cost. A driver can complete about two trips per hour, resulting in approximately $2\times 10=20$ dollars per hour. Thus, we set $h_i^n=20$ for $i=1,\ldots,4$. Thus, we have $h^n=\min_{i=1,\ldots,4} h_i^n-h_0^n=19$. Upon scaling, we derive the limiting holding cost rate $h$ for the equivalent workload formulation as $h=\sqrt{n} h^n=1900$. The idleness costs parameters are set to equal the lost revenue. That is, $c_i^n=p_i^*=10$ for $i=1,\ldots,4$. Upon rescaling, the limiting idleness cost is $c_i=c_i^n/\sqrt{n}=0.1$. Thus, the cheapest server to idle as $k^*=\arg\min_{i=1,\ldots,4} c_i/\lambda_i^*=2$ with the idling cost $r=c_{k^*}/\lambda_{k^*}^*=0.0933$.

    Having computed the parameters $a, \sigma^2, h, r, \eta$, and $\hat{\alpha}$, we solve the Bellman equation numerically for the example. Using this solution, we next describe our proposed policy.

    \noindent\textbf{Pricing Policy.} It follows from Equation (\ref{eq:138*}) that \begin{align*}
        p_i^n(t) &= 10-\frac{1}{200} v\left(\frac{W^n(t)}{100}\right), \quad i=1,\ldots,4, \quad  t\geq 0.
    \end{align*}
    This corresponds to the following demand rates: \begin{align*}
        \lambda_i^n &= 10000\lambda_i^* +\frac{50}{\alpha_i} v\left(\frac{W^n(t)}{100}\right), \quad i=1,\ldots,4, \quad  t\geq 0.
    \end{align*}
    \noindent\textbf{Dispatch Policy.} As discussed in Section \ref{sec:8}, we propose two dispatch policies. Under the first proposed policy (Dispatch Policy 1), servers 2 and 4 work only on their own buffer throughout. Servers 1 and 3 prioritize their own buffers, but server 1 serves buffer 2 if buffer 1 is empty and buffer 2 exceeds threshold $s$. Similarly, server 3 serves buffers 2 or 4 only if buffer 3 is empty and buffer 2 or 4 exceeds threshold $s$. If both queues exceeds $s$, then server 3 serves the longest one. We determine the threshold $s$ by a brute-force search. In particular, we set $s=1$.  

     Under Dispatch Policy 2, each server prioritizes his own (local) buffer. If his own buffer is empty, then he checks the other buffers that he can serve using basic activities. If there are multiple such buffers, the server works on the buffer with the largest queue length. If the server's own (local) buffer is empty and he cannot serve any other buffers using basic activities, then he considers all remaining buffers he can serve (using nonbasic activities) and works next on the buffer with the largest queue length.

    In order to compare the performance of our policy, we calculate the total revenue by adding up the prices charged to each served customer. This also incorporates the cost of idleness. Also, we keep track of the holding costs incurred. Lastly, we use \[\Tilde{V}^n(t)=\left(n\pi(\lambda^*)-\sqrt{n}h_0\right)t=\left(n\sum_{i=1}^4\frac{\lambda_i^*}{b_i}(a_i-\lambda_i^*)-\sqrt{n}h_0\right)t, \text{ for } t\geq 0 \] (see Equation (\ref{eq:3.18*})) to compute the normalized cost $\hat{V}^n(t)$, see Equation (\ref{eq:44}).

    We compare our policy against the following benchmark policies that combine alternative pricing and dispatch policies. For pricing, in addition to our dynamic pricing policy, we also consider the static pricing policy which sets $p_i^n(t)=p_i^*=10$ for all $i=1,\ldots,4$ and $t\geq 0$. For dispatch, in addition to our two proposed policies, we consider (i) a static dispatch policy, and (ii) the closest driver policy as described next.
	
    \noindent\textbf{Static Dispatch Policy.} 
    Servers 2 and 4 always serve their own buffers. If both buffers 1 and 2 are nonempty, then server 1 works on buffer 1 with probability $x_1^*/(x_1^*+x_5^*)=0.965$ and it works on buffer 2 with probability $x_5^*/(x_1^*+x_5^*)=0.035$. If only one of the buffers 1 and 2 is nonempty, then server 1 works on that buffer. Server 3 splits its effort among buffers 2, 3, and 4 similarly, i.e., proportional to $x_3^*, x_8^*$, and $x_9^*$, respectively. 

    \noindent\textbf{Closest Driver Policy.} 
    We let $D$ be the distance matrix, i.e., $D_{ij}$ corresponds to the distance (in miles) between regions $i$ and $j$ when $i\not = j$ and $D_{ii}=0$. Using the data from \citet{AtaBarjestehKumar_Empirical}, we have \begin{align*}
        D &= \begin{bmatrix}
         0 & 2.6414 & 4.8132 & 8.2689   \\
         2.6414 & 0 & 1.9993 & 6.1969  \\
         4.8132 & 1.9993 & 0 & 3.9073  \\
         8.2689 & 6.1969 & 3.9073 &	0   
     \end{bmatrix}.
    \end{align*} 
    Server $i$ engages in activity $\arg\min_{j\in{\cal A}_i} D_{i b(j)}(t)$ at time $t$. 
    In other words, under the closest driver policy each server prioritizes the buffer that is closest to him.

    The result of the numerical study are given in Table \ref{tab:Simulation}. The simulated results are obtained based on a run-length of 1000 hours and the estimated average cost is computed by excluding the statistics from the first 200 hours warm-up period. The corresponding confidence intervals are calculated based on 10 macro-replications. We observe that the proposed dispatch policies (DP1, DP2) offer significant improvement (9.74\%-55.01\%) over the benchmark policies. More importantly, we observe that dynamic pricing can lead to significant improvement (30.96\%-61.73\%) for every dispatch policy considered. Among the policies considered, the dynamic pricing with Dispatch Policy 2 (DP2) has the best performance.  

\begin{table}[h!]
	\centering
	\caption{Estimated average cost along with the 95\% confidence interval based on 10 macro-replications. }
	\label{tab:Simulation}
	{\begin{tabular}{c||c|c}
			\toprule
			Dispatch policy & Static pricing policy & Dynamic pricing policy   \\  \midrule
			DP1 & 10075.23 $\pm$ 201.59 & 4302.59 $\pm$ 94.09  \\
            DP2 & 10607.19 $\pm$ 103.18 & 4059.35 $\pm$ 73.73 \\
            Static policy & 13066.83 $\pm$ 457.31 & 9021.89 $\pm$ 204.19 \\
            Closest driver policy & 12100.53 $\pm$ 193.57 & 4766.96 $\pm$ 122.19
			\\ \bottomrule
	\end{tabular}}
\end{table}

Unfortunately, we do not have any data to directly estimate the holding costs and the cost of idleness. For the former, the actual holding cost may be lower because the opportunity cost we estimate is likely an upper bound. On the other hand, the latter does not account for the loss of goodwill currently. Therefore, we conduct a sensitivity analysis that considers lower holding cost rates (Figure \ref{fig:HoldingCostSensitivity}) and another one that considers higher cost of idleness that incorporate the loss of goodwill\footnote{The estimated performance and the corresponding confidence interval for the sensitivity analysis is also based on 10 macro-replications where each replication has a run-length of 1000 hours (and the statistics of the first 200 hour are discarded as a warm-up period).} (Figure \ref{fig:IdleCostSensitivity}). 
These collectively show that the insights from Table \ref{tab:Simulation} are robust to changes in holding and idleness cost parameters. 

\begin{figure}[H]
	\centering
     \begin{subfigure}{.5\textwidth}
		\centering
		\includegraphics[width=\linewidth]{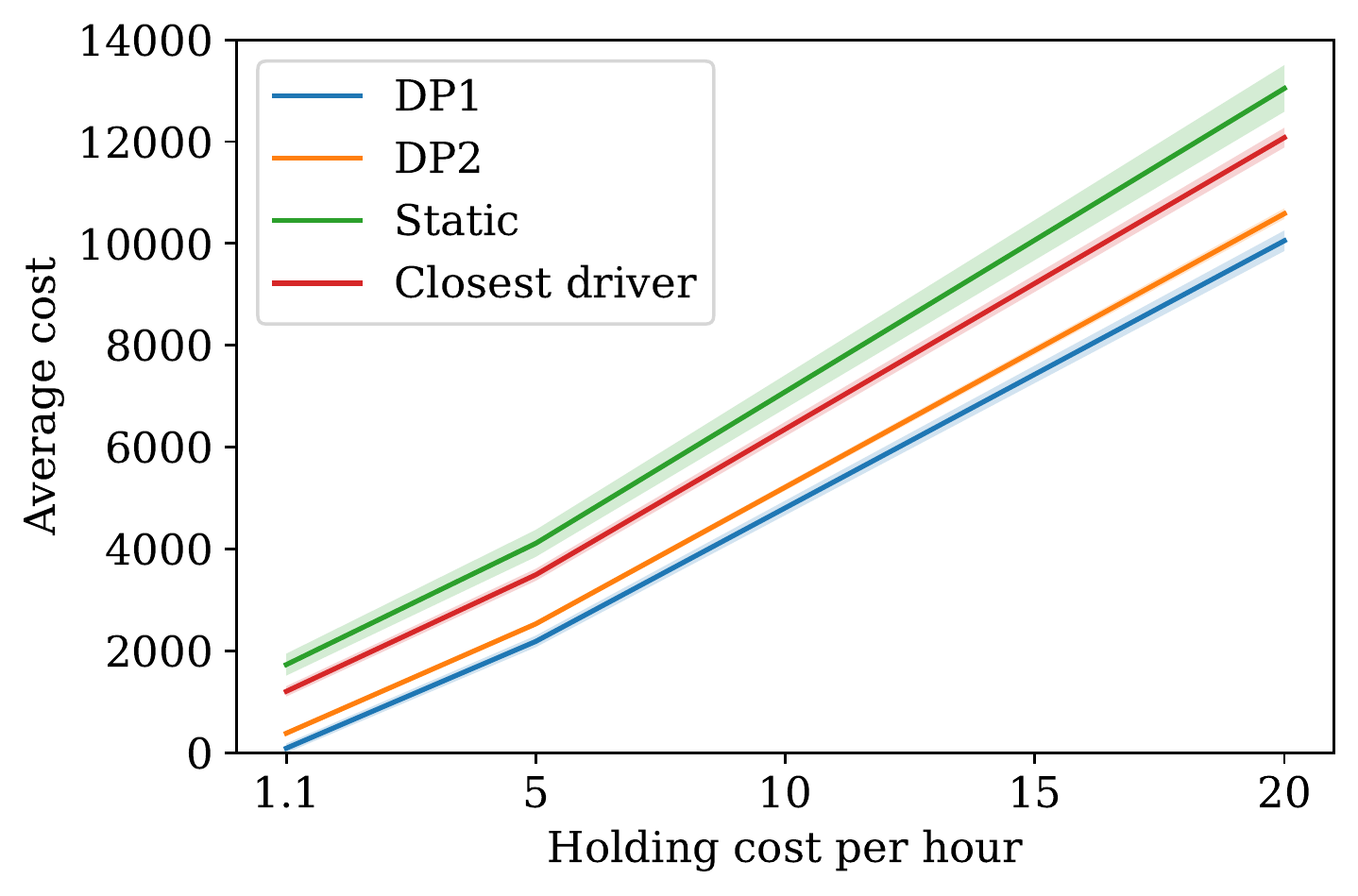}
		\caption{Static pricing}
		\label{fig:sub2}
	\end{subfigure}%
	\begin{subfigure}{.5\textwidth}
		\centering
		\includegraphics[width=\linewidth]{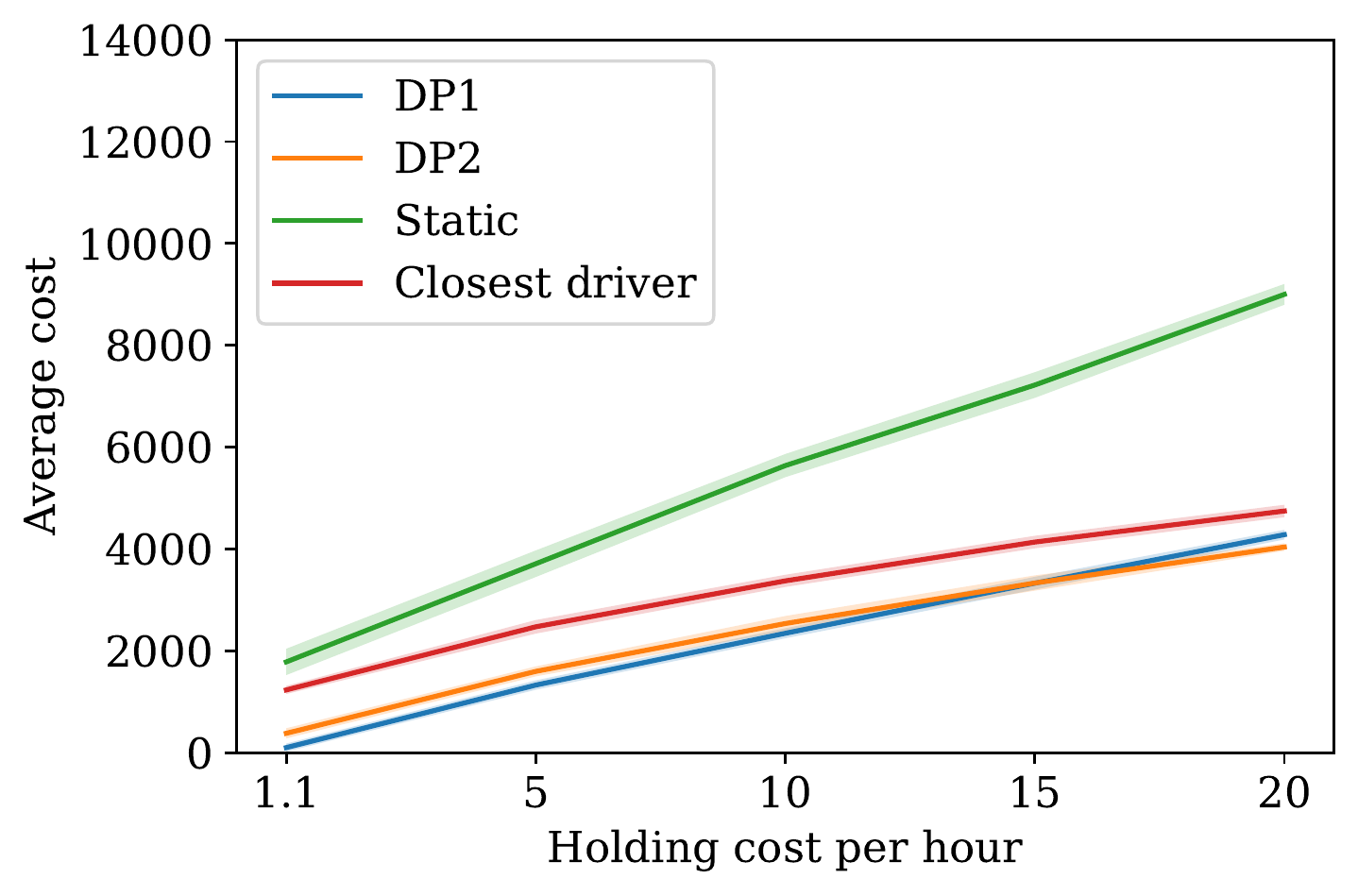}
		\caption{Dynamic pricing}
		\label{fig:sub1}
	\end{subfigure}
	\caption{Average cost with respect to varying holding cost. The shaded area along each line shows the 95\% confidence interval based on 10 macro-replications. }
	\label{fig:HoldingCostSensitivity}
\end{figure}

\begin{figure}[H]
	\centering
    \begin{subfigure}{.5\textwidth}
		\centering
		\includegraphics[width=\linewidth]{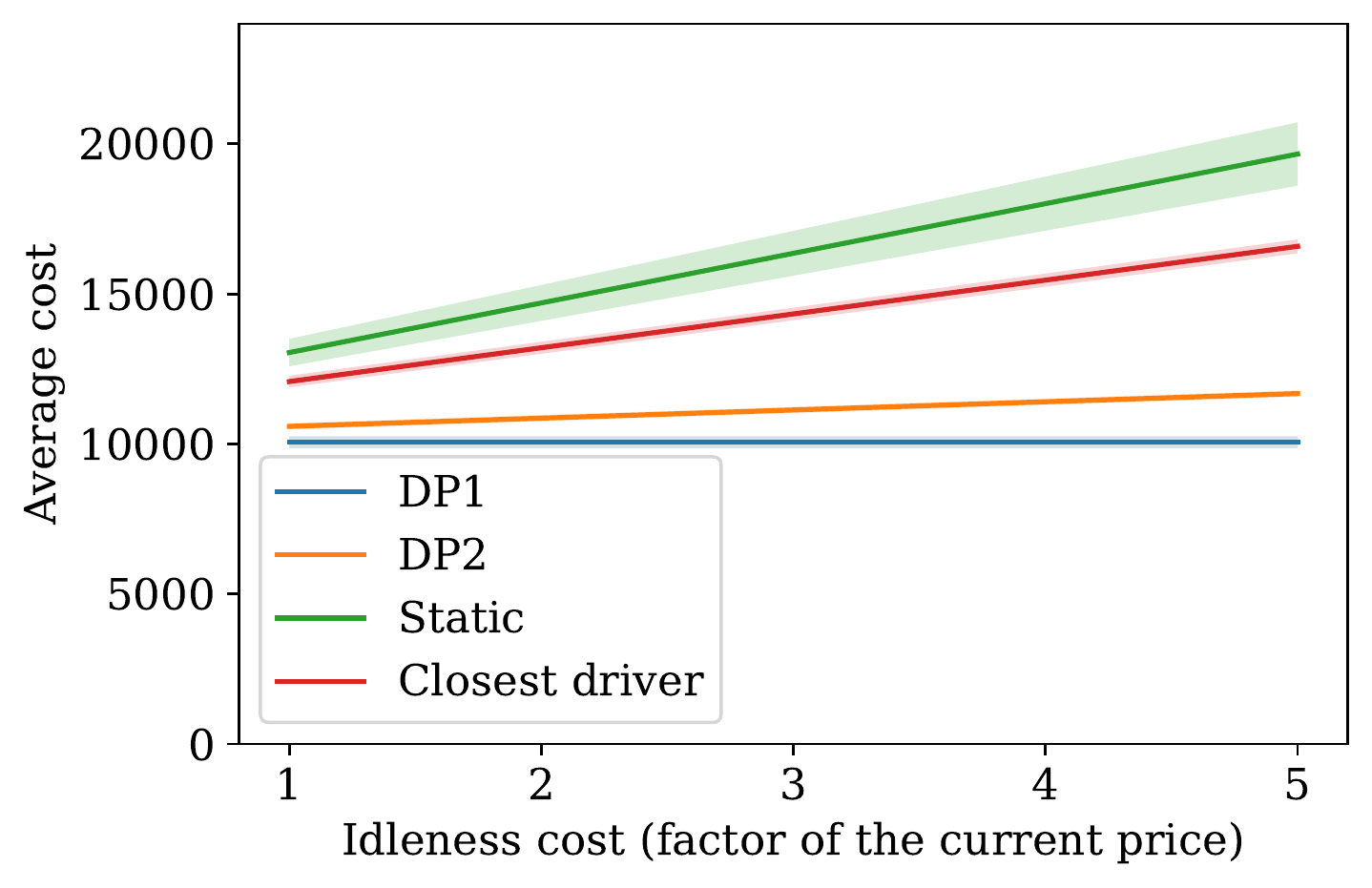}
		\caption{Static pricing}
		\label{fig:sub2}
	\end{subfigure}%
	\begin{subfigure}{.5\textwidth}
		\centering
		\includegraphics[width=\linewidth]{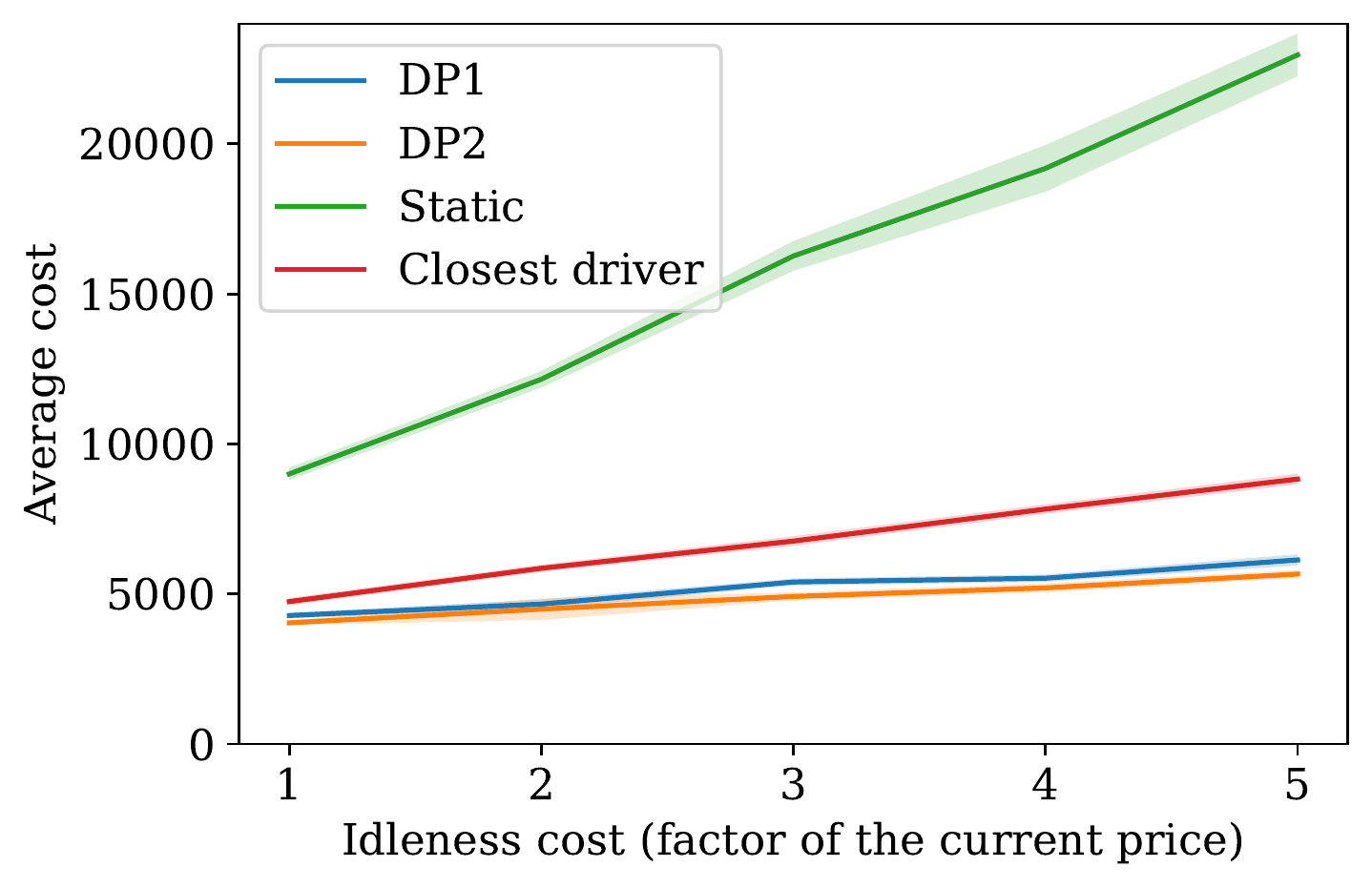}
		\caption{Dynamic pricing}
		\label{fig:sub1}
	\end{subfigure}
	\caption{Average cost with respect to varying idleness cost. The shaded area along each line shows the 95\% confidence interval based on 10 macro-replications.}
	\label{fig:IdleCostSensitivity}
\end{figure}

	\section{Concluding Remarks}\label{sec:10}
	We study a dynamic pricing and dispatch control problem motivated by ride-hailing systems. The novelty of our formulation is that it incorporates travel times. We solve this problem analytically in the heavy traffic regime under the complete resource pooling condition. Using this solution, we propose a closed form dynamic pricing policy as well as a dispatch policy. We compare the proposed policy against benchmarks in a simulation study and show that it is effective.

    Our formulation has some limitations too. Namely, we assume there is only one travel node and that the complete resource pooling condition holds. Interesting future research directions include relaxing these assumptions.

\clearpage
\newpage


\singlespacing

	\newpage
	\doublespacing
	\appendix
	\section*{Appendices}
	\section{Derivations}\label{app:A}
	\subsection{Formal Derivation of the Brownian Control Problem}
	 This section provides a formal derivation of the approximating Brownian control problem introduced in Section \ref{sec:4}. We do not provide a rigorous weak convergence limit theorem. However, the arguments given in support of the approximation can be viewed as a broad outline for such a proof; see \citet{Harrison1988,Harrison2000,Harrison2003} for similar derivations.
	
	We consider a sequence of systems indexed by the system parameter $n$ under the heavy traffic assumption. Then we center the various processes by their mean, scale them appropriately by the system parameter $n$, and finally pass to the limit as $n\rightarrow\infty$ formally. To that end, we first define the following (diffusion) scaled processes:
	\begin{alignat}{2}
	\hat{\Psi}_i^n (t) &= \frac{1}{\sqrt{n}}\left( \Psi_i\left(\lfloor nt\rfloor\right) - q_i nt\right), &&\quad t\ge 0 ,\quad i=1,\dots, I,\label{eq:6.2}\\
	\hat{N}_j^n(t) &= \frac{1}{\sqrt{n}}\left(N_j(nt) - nt\right), && \quad t\ge 0,\quad j=0,1,\dots, J,\label{eq:6.1}
	\end{alignat}
	where $\lfloor x\rfloor$ is the greatest integer less than or equal to $x$. We also define the following (fluid) scaled processes:
	\begin{alignat}{2}
	\bar{N}_0^n(t) &= \frac{1}{n}N_0(nt),\quad &&t\ge 0,\label{eq:6.3}\\
	\bar{Q}_0^n(t)&= \frac{1}{n}Q_0^n(t),\quad &&t\ge 0,\label{eq:6.4}\\
	\bar{\mu}_j^n(t) &= \frac{1}{n}\mu_j^n(t),\quad &&j=1,\dots, J,\quad t\ge 0.\label{eq:6.5}
	\end{alignat}
	By Donsker's theorem, the functional central limit theorem for renewal processes, and independence of the stochastic primitives, the processes $\hat{\Psi}_i^n$ and $\hat{N}_j^n$ converge weakly to independent standard Brownian motions, see \citet{Billingsley}. 
	
	As observed in \citet{KoganLipster}, under the heavy traffic assumption, we expect that the number of jobs in the infinite-server node will be $n$ to a first-order approximation. That is, we expect that $\bar{Q}^n_0(t)\approx 1$ for $t\ge 0$ as $n$ gets large. Similarly, we expect the queue lengths at buffers $1,\dots, I$ to be of order $\sqrt{n}$. As such, we expect the prices, or equivalently, the demand rates, to deviate from their nominal values only in the second order. That is, we expect $\lambda_i^n-\lambda^*_i n = O\left(\sqrt{n}\right)$. Because the demand rates determine the service rates (see Equation (\ref{eq:9})), we expect that $\bar{\mu}_j^n(t)\approx \mu_j^*$ for $t\ge 0$ as $n$ gets large.
	
	By combining Equations (\ref{eq:6.2})--(\ref{eq:6.5}) with Equations (\ref{eq:TravelTime})--(\ref{eq:42}), it is straightforward to derive the following scaled system dynamics equations for $i=1,\dots, I$:
	\begin{align}
	Z_i^n(t) 
	&= B_i^n(t) +q_i \eta^n\int_{0}^{t}Z_0^n(s)\,ds-\sum_{j\in\mathcal{C}_i}\int_{0}^{t}\kappa_j^n(s)\,dT_j^n(s)+ \sum_{j\in\mathcal{C}_i}\mu_j^* Y_j^n(t) + t\sqrt{n}\left[q_i\eta - \sum_{j\in\mathcal{C}_i}\mu_j^* x_j^*\right]\notag\\
	&= B_i^n(t) +q_i\eta^n\int_{0}^{t}Z_0^n(s)\,ds-\sum_{j\in\mathcal{C}_i}\int_{0}^{t}\kappa_j^n(s)\,dT_j^n(s)+ \sum_{j\in\mathcal{C}_i}\mu_j^* Y_j^n(t),\notag 
	\end{align}
	where the second equality holds by Assumption \ref{ass:3} and where the process $B_i^n$ is given by
	\begin{align*}
	B_i^n(t) &= Z_i^n(0) +q_i\hat{\eta}t + q_i \hat{N}_0^n\left(\eta^n\int_{0}^{t}\bar{Q}^n_0(s)\,ds\right) + \hat{\Psi}_i^n \left(\bar{N}^n_0\left(\eta^n\int_{0}^{t}\bar{Q}^n_0(s)\,ds\right)\right) \\ 
    &\quad -\sum_{j\in\mathcal{C}_i}\hat{N}_j^n\left(\int_{0}^{t}\bar{\mu}_j^n(s)\,dT^n_j(s)\right).
	\end{align*}
	Assuming that $Z_i^n(0)\approx Z_i(0)$ for large $n$, it is also straightforward to argue that $B_i^n$ can be approximated by a Brownian motion $B_i$ with starting state $Z_i(0)$ that has drift parameter $\gamma_i=\hat{\eta}q_i$ and variance parameter
	\begin{align*}
	\sigma_i^2 = \left[q_i^2 + q_i\left(1-q_i\right)\right]\eta + \sum_{j\in\mathcal{C}_i} \mu_j^*x_j^*
	= q_i \eta + \sum_{j\in\mathcal{C}_i} \mu_j^*x_j^*.
	\end{align*}
	Furthermore, the covariance between the limiting Brownian motion processes is given by
	\begin{align*}
	\text{Cov}\left(B_i,B_{i^{\prime}}\right) = q_i q_{i^{\prime}}\eta\quad \text{for}\quad i\ne i^{\prime}.
	\end{align*}Therefore, replacing $Z^n$, $Y^n$, and $\kappa^n$, by their formal limits $Z$, $Y$, and $\kappa$, we arrive at the following system dynamics equations in the approximating Brownian control problem for $i=1,\dots, I$:
	\begin{align*}
	Z_i(t)= B_i(t) +q_i\eta\int_{0}^{t}Z_0(s)\,ds-\sum_{j\in\mathcal{C}_i}\int_{0}^{t}x_j^*\kappa_j(s)\,ds+ \sum_{j\in\mathcal{C}_i}\mu_j^* Y_j(t),\quad t\ge 0.
	\end{align*}
	Equations (\ref{eq:2.11}) and (\ref{eq:38}) of the system state also imply that $Z_0^n(t) = -\sum_{i=1}^{I}Z_i^n(t)$ and that $Z_i^n(t)\ge 0$ for $i=1,\dots, I$ and $t\ge 0$. Thus, in the approximating BCP, the following relationships hold for $t\ge 0$:
	\begin{align*}
	Z_0(t) = -\sum_{i=1}^{I}Z_i(t)\quad \text{and}\quad Z_i(t)\ge 0\quad \text{for}\quad i=1\dots, I.
	\end{align*}
	Similarly, it is clear that Equations (\ref{eq:9}) and (\ref{eq:41*})--(\ref{eq:42}) give rise to Equation (\ref{eq:3.22}) in the BCP; Equations (\ref{eq:2.9}) and (\ref{eq:40}) give rise to Equation (\ref{eq:3.21}); and Equation (\ref{eq:41}) gives rise to Equation (\ref{eq:3.20}).
		
	To complete the formal derivation of the Brownian control problem, we argue that $\hat{V}^n\approx\xi$ for large $n$, where $\hat{V}^n$ and $\xi$ are given by Equations (\ref{eq:44}) and (\ref{eq:45}), respectively. First, observe that by Taylor's theorem we have
	\begin{align*}
	\pi\left(\lambda^* + \frac{1}{\sqrt{n}}\zeta^n(s)\right) &= \pi\left(\lambda^*\right) + \nabla \pi\left(\lambda^*\right)^{\prime}\frac{1}{\sqrt{n}}\zeta^n(s) + \frac{1}{2n}\zeta^n(s)^{\prime}\nabla^2\pi\left(\lambda^*\right)\zeta^n(s)+ R_{\lambda^*, 3}\left(\frac{1}{\sqrt{n}}\zeta^n(s)\right),
	\end{align*}
	where $R_{\lambda^*, 3}\left(\frac{1}{\sqrt{n}}\zeta^n(s)\right)=O(n^{-3/2})$ is a third-order remainder term.\footnote{In particular, the remainder term is given by \begin{align*}
		R_{\lambda^*, 3}\left(\frac{1}{\sqrt{n}}\zeta^n(s)\right)=\sum_{\substack{\alpha_1,\dots, \alpha_I\in \left\{0,1,2,3\right\}\\ \text{s.t. }\alpha_1+\cdots+ \alpha_I = 3}} \frac{\partial^3 \pi\left(\lambda^*  + \frac{C}{\sqrt{n}}\zeta^n(s)\right)}{\partial x_1^{\alpha_1}\partial x_2^{\alpha_2}\cdots \partial x_I^{\alpha_I}} \prod_{i=1}^{I}\frac{\left(\frac{1}{\sqrt{n}}\zeta_i^n(s)\right)^{\alpha_i}}{\alpha_i!}\quad\text{for some}\quad C\in (0,1).
		\end{align*}} 
	Moreover, note that the term $\nabla \pi\left(\lambda^*\right)^{\prime}\zeta^n(s)/\sqrt{n}$ vanishes because $\lambda^*$ is a maximizer of $\pi\left(\lambda\right)$ and is in the interior of the feasible region $\mathcal{L}$ (see Assumption \ref{ass:2}), implying that $\nabla\pi\left(\lambda^*\right)=0$. Therefore, we have that
	\begin{align*}
	\pi\left(\lambda^* + \frac{1}{\sqrt{n}}\zeta^n(s)\right)&=\pi\left(\lambda^*\right) - \frac{1}{n}\zeta^n(s)^{\prime}H\zeta^n(s)+ O\left(n^{-3/2}\right),
	\end{align*}
	where $H=-\frac{1}{2}\nabla^2\pi\left(\lambda^*\right)$.
	Using this and Equations (\ref{eq:3.4*}) and (\ref{eq:41*}), it follows that
	\begin{align}
	\pi^n\left(\lambda^n(s)\right)= n\pi\left(\lambda^*\right) -\zeta^n(s)^{\prime}H\zeta^n(s) + O\left(n^{-1/2}\right).\label{eq:8.27}
	\end{align}
	Finally, using Equations (\ref{eq:38}), (\ref{eq:40})--(\ref{eq:44}), and (\ref{eq:8.27}), it is straightforward to derive the following:
	\begin{align}
	\hat{V}^n(t) &= n\left(\pi\left(\lambda^*\right) - h_0^n\right)t- \left[\int_{0}^{t}\pi^n\left(\lambda^n(s)\right)\,ds-\int_{0}^{t}\sum_{i=0}^{I}h_i^nQ_i^n(s)\,ds - \left(c^n\right)^{\prime} I^n(t)\right]\notag\\
	&=\int_{0}^{t}\left[\zeta^n(s)^{\prime}H\zeta^n(s)+O\left(n^{-1/2}\right)\right]\,ds + \int_{0}^{t}\sum_{i=0}^{I}h_i Z_i^n(s)\,ds + c^{\prime}U^n(t).\notag
	\end{align}
	Therefore, replacing $\hat{V}^n$, $Z^n$, $\zeta^n$, and $U^n$ by their formal limits $\xi$, $Z$, $\zeta$, and $U$, we arrive at the following cost process of the approximating Brownian control problem:
	\begin{align*}
	\xi(t)= \int_{0}^{t}\zeta(s)^{\prime}H\zeta(s)\,ds + \int_{0}^{t}\sum_{i=0}^{I}h_i Z_i(s)\,ds + c^{\prime}U(t),\quad t\ge 0.
	\end{align*}
Note that it is a diagonal matrix, i.e., $H={\rm diag}(\alpha_1,\ldots,\alpha_I)$ where \[\alpha_i=-\left(\Lambda_i^{-1}\right)(\lambda_i^\ast)-\frac{\lambda_i^\ast}{2}\left(\Lambda_i^{-1}\right)'' (\lambda_i^\ast), \quad i=1,\ldots, I.\]
Using this we further simplify the limiting cost process $\xi(t)$ as follows: \[\xi(t) = \int_0^t \left[\sum_{i=1}^I \alpha_i \zeta_i^2(s)+\sum_{i=0}^I h_i Z_i(s) \right] ds + c' U(t), \quad t\geq 0. \]
 
	\subsection{Derivation of Equation (\ref{eq:138*})}
	Recall that the proposed chosen demand rates are 
	$$\lambda_i^n = n\lambda_i^* + \sqrt{n}q(t),\quad i=1,\dots, I,\quad t\ge 0,$$
	where $q(t) = \frac{1}{2\alpha_i}v\left(\frac{W^n(t)}{\sqrt{n}}\right)$. Therefore, the proposed pricing policy for region $i$ is given as follows:
	\begin{align*}
	p_i^n(t) &= \left(\Lambda_i^n\right)^{-1}\left(\lambda_i^n(t)\right) \\&= \left(\Lambda_i^n\right)^{-1}\left(n\lambda_i^* + \sqrt{n}q(t)\right) \\&= \Lambda_i^{-1}\left(\lambda_i^* + \frac{q(t)}{\sqrt{n}}\right),
	\end{align*}
	where the third equality follows from the fact that $\left(\Lambda_i^n\right)^{-1}(nx) = \Lambda_i^{-1}(x)$ for $x\in\mathcal{L}$. Then note that by Taylor's theorem we have
	\begin{align*}
	p_i^n(t) = \Lambda_i^{-1}\left(\lambda_i^*\right) + \left(\Lambda_i^{-1}\right)^{\prime}\left(\lambda_i^*\right)\frac{q(t)}{\sqrt{n}}+ \frac{1}{2}\left(\Lambda_i^{-1}\right)^{\prime\prime}\left(\lambda_i^* + \frac{c\cdot q(t)}{\sqrt{n}}\right)\left(\frac{q(t)}{\sqrt{n}}\right)^2\quad\text{for some}\quad c\in\left(0,1\right),
	\end{align*}
	which implies that
	\begin{align*}
	p_i^n(t) &= \Lambda_i^{-1}\left(\lambda_i^*\right) + \left(\Lambda_i^{-1}\right)^{\prime}\left(\lambda_i^*\right)\frac{q(t)}{\sqrt{n}} + O\left(\frac{1}{n}\right)\\
	&=\Lambda_i^{-1}\left(\lambda_i^*\right) + \frac{\left(\Lambda_i^{-1}\right)^{\prime}\left(\lambda_i^*\right)}{2\alpha_i\sqrt{n}}v\left(\frac{W^n(t)}{\sqrt{n}}\right) + O\left(\frac{1}{n}\right).
	\end{align*}
	As an aside, observe that by rearranging terms we have
	\begin{align*}
	\sqrt{n}\left(p_i^n(t) - \Lambda_i^{-1}\left(\lambda_i^*\right)\right) = \frac{\left(\Lambda_i^{-1}\right)^{\prime}\left(\lambda_i^*\right)}{2\alpha_i}v\left(\frac{W^n(t)}{\sqrt{n}}\right) + O\left(\frac{1}{\sqrt{n}}\right).
	\end{align*}
	This implies that our proposed dynamic pricing policy coincides with the static prices to a first-order approximation, but deviates from the static prices on the second order, i.e., order $1/\sqrt{n}$.

\section{Miscellaneous Proofs}\label{app:B}

	\begin{proof}[Proof of Lemma \ref{lem:2}]
		This proof follows in an almost identical fashion to Lemma 2 in \citet{AtaBarjestehKumar_SPN}, but we include it for completeness. It consists of four steps. We let $e^n$ denote the $n$th unit basis vector in a Euclidean space of appropriate dimension. That is, the $n$th component of $e^n$ is one, whereas its other components are zero. Moreover, recall from the discuss following Assumption \ref{ass:3} that for a vector $y\in\mathbb{R}^J$, we write $y = \left(y_B, y_N\right)$ where $y_B\in\mathbb{R}^b$ and $y_N\in\mathbb{R}^{J-b}$.
		
		\noindent\textbf{Step 1:} Consider the set of basic activity rates that do not cause any server idleness, i.e., $\left\{y\in\mathbb{R}^J: By_B=0,\,y_N=0\right\}$. First, we show that this set is the span of $\rttensortwo{C}$, defined next:
		\begin{align}
		\rttensortwo{C}&=\left\{e^j-e^{j^{\prime}}:\left(j,j^{\prime}\right)\in \bar{C},\,e^j,\,e^{j^{\prime}}\text{ are unit basis vectors in }\mathbb{R}^J\right\},\label{Cbarbar}
		\end{align}
		where $\bar{C}=\left\{\left(j,j^{\prime}\right): j,\,j^{\prime}\in\left\{1,\dots, b\right\}\text{ such that }s(j) = s(j^{\prime})\right\}$. That is, $\bar{C}$ is the set of all pairs of basic activities undertaken by the same server. Note that the difference $e^j - e^{j^{\prime}}$ in Equation (\ref{Cbarbar}) captures the trade-off server $s(j)$ makes by increasing the rate at which activity $j$ is undertaken (from its nominal value $x_j^*$) at the expense of decreasing the rate of activity $j^{\prime}$. By making such adjustments to the nominal basic activity rates $x_B^*$, the system manager can redistribute the workload between buffers $b(j)$ and $b(j^{\prime})$ without incurring any idleness. As such, we intuitively expect that taking linear combinations of such activity rates in $\rttensortwo{C}$ should yield the set of activity rates that do not result in any idleness, i.e., the set $\left\{y\in\mathbb{R}^J: By_B=0,\,y_N=0\right\}$. In summary, in Step 1 we prove that $$ \text{span}\left(\rttensortwo{C}\right)=\left\{y\in\mathbb{R}^J: By_B=0,\,y_N=0\right\}.$$
		To prove this, we show inclusions of both sets. First, let $y\in \left\{y\in\mathbb{R}^J: By_B= 0,\,y_N =0\right\}$. To prove that $y\in \text{span}\left(\rttensortwo{C}\right)$, we show that there exist constants $a_{jj^{\prime}}$, $(j,j^{\prime})\in\bar{C}$, such that $y = \sum_{(j,j^{\prime})\in\bar{C}}a_{jj^{\prime}}\left(e^{j}- e^{j^{\prime}}\right).$ To find these constants, it will be convenient to define the sets 
		\begin{align*}
		\bar{\mathcal{A}}_i = \mathcal{A}_i\cap \left\{1,\dots, b\right\},
		\end{align*}
		where $\mathcal{A}_i$ is the set of activities undertaken by server $i$, see Equation (\ref{eq:1.3}). To be more specific, $\bar{\mathcal{A}}_i$ consists of all basic activities undertaken by server $i$. After possibly relabeling, suppose that the basic activities are ordered so that
		\begin{align*}
		\bar{\mathcal{A}}_i = \left\{b_{i-1}+1,\dots, b_{i}\right\}\quad \text{for}\quad i=1,\dots, I,
		\end{align*}
		where $0=b_0<b_1<b_2<\cdots < b_{I}=b.$ We define constants $a_{jj^{\prime}}$ for $(j,j^{\prime})\in\bar{C}$ as follows: 
		\begin{align*}
		a_{jj^{\prime}}=\Bigg\{
		\begin{array}{ll}
		\sum_{l=b_{i-1}+1}^{k}y_l,&\text{if $\left(j,j^{\prime}\right)=(k,k+1)$ for $k=b_{i-1}+1,\dots, b_{i} -1$ and $i=1,\dots, I$},\\
		0,&\text{otherwise.}
		\end{array}
		\end{align*}
		Therefore, we have that
		\begin{align*}
		\sum_{(j,j^{\prime})\in\bar{C}}a_{jj^{\prime}}\left(e^j-e^{j^{\prime}}\right) &= \sum_{i=1}^{I}\sum_{k=b_{i-1}+1}^{b_{i}-1}a_{k,\,k+1}\left(e^{k}- e^{k+1}\right) \\&= \sum_{i=1}^{I}\sum_{k=b_{i-1}+1}^{b_{i} - 1}\left[\left(\sum_{l=b_{i-1}+1}^{k}y_{l}\right)\left(e^{k}- e^{k+1}\right)\right]\\
		&=\sum_{i=1}^{I}\Big[y_{b_{i-1}+1}\left(e^{b_{i-1}+1}-e^{b_{i-1}+2}\right)+\left(y_{b_{i-1}+1}+y_{b_{i-1}+2}\right)\left(e^{b_{i-1}+2}-e^{b_{i-1}+3}\right)\\
		&\qquad +\cdots +\left(\sum_{l=b_{i-1}+1}^{b_{i} -2}y_l\right)\left(e^{b_{i} -2}-e^{b_{i}-1}\right)+\left(\sum_{l=b_{i-1}+1}^{b_{i} -1}y_l\right)\left(e^{b_{i} -1}-e^{b_{i} }\right)\Big]\\
		&=\sum_{i=1}^{I}\Big[y_{b_{i-1}+1}e^{b_{i-1}+1}+\left(y_{b_{i-1}+2}+y_{b_{i-1}+1} - y_{b_{i-1}+1}\right)e^{b_{i-1}+2}\\
		&\qquad +\cdots +\left(\sum_{l=b_{i-1}+1}^{b_{i} -1}y_l-\sum_{l=b_{i-1}+1}^{b_{i} -2}y_l\right)e^{b_{i} -1}-\left(\sum_{l=b_{i-1}+1}^{b_{i} -1}y_l\right)e^{b_{i}}\Big]\\
		&= \sum_{i=1}^{I}\left[y_{b_{i-1}+1}e^{b_{i-1}+1}+y_{b_{i-1}+2}e^{b_{i-1}+2}+\cdots +y_{b_{i} -1}e^{ b_{i} -1}-\left(\sum_{l=b_{i-1}+1}^{b_{i} -1}y_l\right)e^{b_{i}}\right]\\
		&=\sum_{i=1}^{I}\left[y_{b_{i-1}+1}e^{b_{i-1}+1}+y_{b_{i-1}+2}e^{b_{i-1}+2}+\cdots +y_{b_{i} -1}e^{ b_{i} -1}-\left(-y_{b_{i}}\right)e^{b_{i}}\right]\\
		&=\sum_{i=1}^{I}\sum_{k=b_{i-1}+1}^{b_{i}}y_{k}e^{k}\\
		&=\sum_{j=1}^{J}y_j e^j,
		\end{align*}
		the first two equalities following from the definition of the $a_{jj^{\prime}}$, 
		the fourth equality from algebraic rearrangements, and the fifth equality from canceling terms. To derive the sixth equality note that $y$ satisfies $By_B=0$, which implies 
		\begin{align*}
		\sum_{l=b_{i-1}+1}^{b_i}y_l = 0\quad\text{for}\quad i=1,\dots, I.
		\end{align*}
		Equivalently, we have that
		\begin{align*}
		\sum_{l=b_{i-1}+1}^{b_i-1}y_l = -y_{b_i}\quad\text{for}\quad i=1,\dots, I.
		\end{align*}
		Substituting this for the last term of the fifth equality yields the sixth equality. Finally, the eighth equality from the facts that $y_N=0$ and that the sets $\bar{\mathcal{A}}_i$, $i=1,\dots, I$, partition the basic activities. Since $y = \sum_{j=1}^{J}y_j e^j$, we conclude that $y\in \text{span}\left(\rttensortwo{C}\right)$.
		
		Conversely, let $y\in \text{span}\left(\rttensortwo{C}\right)$. Then there are constants $a_{jj^{\prime}}$, $(j,j^{\prime})\in \bar{C}$, such that $$y = \sum_{(j,j^{\prime})\in\bar{C}}a_{jj^{\prime}}\left(e^{j}- e^{j^{\prime}}\right).$$ Since $\bar{C}$ consists only of pairs of basic activities, it follows that $y_N = 0$. Furthermore, for $(j,j^{\prime})\in\bar{C}$ and $i\in \left\{1,\dots, I\right\}$, we have
		\begin{align*}
		\left[A\left(e^j - e^{j^{\prime}}\right)\right]_i  = \sum_{l=1}^{b}A_{il}\left(e_l^j - e_l^{j^{\prime}}\right) = \sum_{l=1}^{b}\mathbf{1}_{\left\{s(l) = i\right\}}\left(e_l^j - e_l^{j^{\prime}}\right) = \mathbf{1}_{\left\{s(j) = i\right\}} - \mathbf{1}_{\left\{s(j^{\prime}) = i\right\}}=0,
		\end{align*}
		the second equality holding by Equation (\ref{eq:1.1}) and the fourth equality holding since $s(j) = s(j^{\prime})$. Therefore, $A\left(e^j - e^{j^{\prime}}\right)=0$ for all $(j,j^{\prime})\in \bar{C}$, implying that $Ay = 0$ by linearity. So, $y\in \left\{y\in\mathbb{R}^J: Ay= 0,\,y_N =0\right\}$.\vspace{0.5em}
		
		\noindent\textbf{Step 2:} In this step, we show that $\mathcal{N} = \text{span}\left(\tilde{C}\right)$, where $\tilde{C} = \left\{Ry: y\in \rttensortwo{C}\right\}.$
		To see this, recall that $\mathcal{N} = \left\{Hy_B: By_B=0,\, y_B\in\mathbb{R}^b\right\} = \left\{Ry: Ay=0,\, y_N=0\right\}.$
		Thus, it follows from Step 1 and the definition of $\tilde{C}$ that $\mathcal{N} = \text{span}\left(\tilde{C}\right)$.\vspace{0.5em}
		
		\noindent\textbf{Step 3:} In this step, we show that $\tilde{C} = \left\{\mu_j^*\left(e^{b(j)} - e^{b(j^{\prime})}\right): \left(j,j^{\prime}\right)\in\bar{C},\, e^{b(i)},\,e^{b(j^{\prime})}\in\mathbb{R}^I\right\}$. To see this, note that for $(j,j^{\prime})\in\bar{C}$ and $i\in\left\{1,\dots, I\right\}$, we have that
		\begin{align*}
		\left[R\left(e^j-e^{j^{\prime}}\right)\right]_i&=\sum_{l=1}^{J}R_{il}\left(e_l^j - e_l^{j^{\prime}}\right)=\sum_{l=1}^{J}\mu_{l}^*\mathbf{1}_{\left\{b(l) = i\right\}}\left(e_l^j - e_l^{j^{\prime}}\right)= \mu_{j}^*\mathbf{1}_{\left\{b(j) = i\right\}} - \mu_{j^{\prime}}^*\mathbf{1}_{\left\{b(j^{\prime}) = i\right\}}\\&= \mu_{j}^*\left(\mathbf{1}_{\left\{b(j) = i\right\}} -\mathbf{1}_{\left\{b(j^{\prime}) = i\right\}}\right)= \mu_j^* \left(e^{b(j)}_i - e^{b(j^{\prime})}_i\right),
		\end{align*}
		the second equality following from Equations (\ref{eq:1.2}) and (\ref{eq:1.7}) and the fourth equality following from the fact that $s(j) = s(j^{\prime})$ (since $(j,j^{\prime})\in \bar{C}$) and Equation (\ref{eq:1.12}). That is, 
		\begin{align}
		R\left(e^j - e^{j^{\prime}}\right) = \mu_j^*\left(e^{b(j)} - e^{b(j^{\prime})}\right)\quad \text{for}\quad (j,j^{\prime})\in\bar{C}.\label{eq:145*}
		\end{align}
		Then using the definition of $\rttensortwo{C}$, we write
		\begin{align*}
		\tilde{C}&=\left\{Ry: y\in\rttensortwo{C}\right\}\\
		&= \left\{Ry: y = e^j-e^{j^{\prime}}\text{ such that }(j,j^{\prime})\in\bar{C},\text{ }e^j,\,e^{j^{\prime}}\text{ are unit basis vectors}\right\}\\
		&=\left\{R\left(e^j - e^{j^{\prime}}\right):(j,j^{\prime})\in\bar{C},\text{ }e^j,\,e^{j^{\prime}}\text{ are unit basis vectors}\right\}\\
		&=\left\{\mu_j^*\left(e^{b(j)} - e^{b(j^{\prime})}\right):(j,j^{\prime})\in\bar{C},\text{ }e^j,\,e^{j^{\prime}}\text{ are unit basis vectors}\right\},
		\end{align*}
		where the last equality follows from Equation (\ref{eq:145*}). Hence, the result holds. In particular, by the definition of buffer communication, note that
		$$\tilde{C} = \left\{\mu_j^*\left(e^{i} - e^{i^{\prime}}\right): \text{buffers }i\text{ and }i^{\prime}\text{ communicate directly, } e^{i},\,e^{i^{\prime}}\in\mathbb{R}^I\right\}.$$
		
		\noindent\textbf{Step 4:} We consider the matrix $M$ defined in Lemma \ref{lem:2} (see Equation (\ref{eq:4.6*})) and show that its rows form a basis for $\mathcal{M}$. To that end, let $M^l$, $l=1,\dots, L$, be the rows of the matrix $M$ given in Equation (\ref{eq:4.6*}). Since the buffer pools partition the servers, the rows of $M$ are linearly independent. Thus, to complete the proof, it suffices to show that $\mathcal{M}=\text{span}\left(M^1,\dots, M^L\right)$. Recalling that $\mathcal{M}= \mathcal{N}^{\perp}$ and $\mathcal{N} = \text{span}\left(\tilde{C}\right)$, it follows that $a\in\mathcal{M}$ if and only if $a\cdot z = 0$ for all $z\in\tilde{C}$.
		Moreover, since $\mu_j^*>0$ for all $j\in \left\{1,\dots, b\right\}$, it follows from Step 3 that $$\mathcal{N} = \text{span}\left(\left\{e^{i} - e^{i^{\prime}}: \text{buffers }i\text{ and }i^{\prime}\text{ communicate directly, } e^{i},\,e^{i^{\prime}}\in\mathbb{R}^I\right\}\right).$$ Therefore, $a\in\mathcal{M}$ if and only if $a_{i} = a_{i^{\prime}}$ for all buffers $i$ and $i^{\prime}$ that communicate directly. 
		
		To prove that $\mathcal{M}=\text{span}\left(M^1,\dots, M^L\right)$ we show inclusions of both sets. On the one hand, let $a\in \mathcal{M}$. Then $a_i = a_{i^{\prime}}$ for all buffers $i$ and $i^{\prime}$ that communicate directly. By definition of buffer communication, it immediately follows that $a_i = a_{i^{\prime}}$ for all buffers $i$ and $i^{\prime}$ that communicate. That is, $a_i = a_{i^{\prime}}$ for all buffers $i$ and $i^{\prime}$ that are in the same buffer pool. Thus, $a\in \text{span}\left(M^1,\dots, M^L\right)$, implying that $\mathcal{M}\subseteq \text{span}\left(M^1,\dots, M^L\right)$. On the other hand, to show that $\text{span}\left(M^1,\dots, M^L\right)\subseteq \mathcal{M}$, it suffices to show that $M^l\in\mathcal{M}$ for each $l=1,\dots, L$. To that end, it is enough to show that $M^l_{i} = M^l_{i^{\prime}}$ for all buffers $i$ and $i^{\prime}$ that communicate directly. However, this trivially holds by Equation (\ref{eq:4.6*}), since buffers $i$ and $i^{\prime}$ that communicate directly are in the same buffer pool. Thus, $\text{span}\left(M^1,\dots, M^L\right)\subseteq \mathcal{M}.$
	\end{proof}
	
	\begin{proof}[Proof of Lemma \ref{lem:3}]
		It is enough to show that $(MR)_{lj}=(GA)_{lj}$ for all $l=1,\dots, L$ and $j=1,\dots, J$, where $G$ is given by Equation (\ref{eq:4.7*}). Indeed, by Equations (\ref{eq:1.2}), (\ref{eq:1.7}), and (\ref{eq:4.6*}),
		\begin{align}
		(MR)_{lj}&=\sum_{i=1}^{I}M_{li}R_{ij} = \sum_{i=1}^{I}\mathbf{1}_{\left\{i\in\mathcal{P}_l\right\}}\mu_j^*\mathbf{1}_{\left\{b(j)=i\right\}}=\mu_j^*\mathbf{1}_{\left\{b(j)\in\mathcal{P}_l\right\}}.\label{eq:7.17}
		\end{align}
		On the other hand, by Equations (\ref{eq:1.1}) and (\ref{eq:4.7*}),
		\begin{align}
		(GA)_{lj}&=\sum_{i=1}^{I}G_{li}A_{ij}=\sum_{i=1}^{I}\lambda_i^*\mathbf{1}_{\left\{i\in\mathcal{S}_l\right\}}\mathbf{1}_{\left\{s(j)=i\right\}}=\lambda^*_{s(j)}\mathbf{1}_{\left\{s(j)\in\mathcal{S}_l\right\}}.\label{eq:7.18}
		\end{align}
		Note that by Equation (\ref{eq:1.12}) we have $\mu_j^*=\lambda^*_{s(j)}$ and by Equation (\ref{eq:4.5}) we have that $b(j)\in\mathcal{P}_l$ if and only if $s(j)\in\mathcal{S}_l$. Thus, the desired result immediately follows by Equations (\ref{eq:7.17})--(\ref{eq:7.18}).
	\end{proof}
	
	\begin{proof}[Proof of Lemma \ref{lem:4}]
		When $L=1$, all buffers are in a single buffer pool. Thus, it follows immediately from Equation (\ref{eq:4.6*}) that $M= e^{\prime}$. Furthermore, by definition of buffer communication and Equation (\ref{eq:4.5}), there is a single server pool. It then follows from Equation (\ref{eq:4.7*}) that $G=\left(\lambda^*\right)^{\prime}$. 
		
		To prove the first relationship in Equation (\ref{eq:5.1*}), note that $$M\eta q = \eta Mq = \eta e^{\prime}q = \eta \sum_{i=1}^{I}q_i =\eta,$$ where the second equality follows from $M=e^{\prime}$ and where the fourth  equality follows from the fact that $q$ is a probability vector. To prove the second relationship in Equation (\ref{eq:5.1*}), first note that $MC=e^{\prime}\in\mathbb{R}^J$. This follows from $M=e^{\prime}\in\mathbb{R}^I$, the definition of $C$ in Equation (\ref{eq:1.2}), and the fact that $C$ has one nonzero element per column. Therefore, $$MC \text{diag}(x^*) A^{\prime} = e^{\prime}\text{diag}(x^*)A^{\prime} = (x^*)^{\prime}A^{\prime} = \left(Ax^*\right)^{\prime}=e^{\prime}\in\mathbb{R}^I,$$
		the fourth equality following from the heavy traffic assumption, see Equation (\ref{eq:1.15}).
	\end{proof}
	
	\begin{proof}[Proof of Lemma \ref{lem:5}]
		This is a straightforward convex optimization problem. Forming the Lagrangian \[{\cal L}(\zeta, \nu) = \sum_{i=1}^I \alpha_i \zeta_i^2 -\nu \sum_{i=1}^I \zeta_i + \nu x,\] where $\nu$ is the Lagrange multiplier, the necessary first-order conditions then give \[\zeta_i^* = \frac{\gamma}{2\alpha_i}, \quad i=1,\ldots, I.\]
        Substituting this into the constant $e'\zeta=x$ yields $\nu=2x/\hat{\alpha}$ and \begin{align}
            \zeta_i^*=\frac{x}{\alpha_i\hat{\alpha}}, \quad i=1,\ldots, I. \label{eq:Zeta}
        \end{align}
        The optimality of this solution follows from the convexity of the objective. Substituting (\ref{eq:Zeta}) in the objective function yields $c(x)=x^2/\hat{\alpha}$ as desired.
	\end{proof}
	
		\begin{proof}[Proof of Proposition \ref{prop:1}]
			Let $(Y,\zeta)$ be an admissible control for (\ref{eq:3.16})-(\ref{eq:3.21}) with the corresponding state process $Z$ and idleness process $U$. Letting $W(t)=MZ(t)$ for $t\geq 0$, (\ref{eq:3.16}) implies that (\ref{eq:RBCP_W}) holds, and (\ref{eq:RBCP_ReduceW}) holds by definition. Similarly, (\ref{eq:RBCP_ZPositive})-(\ref{eq:RBCP_UNondecreasing}) follow from (\ref{eq:3.19})-(\ref{eq:3.21}) whereas (\ref{eq:4.14}) follows from (\ref{eq:3.22}). Thus, $(Z, U, \zeta)$ of the BCP formulation (\ref{eq:3.16})-(\ref{eq:3.21}) is an admissible policy for the RBCP (\ref{eq:4.9})-(\ref{eq:4.14}). Because the two formulations have the same process $Z, U, \zeta$, they have the same cost. 

            The converse follows exactly as in (the second part of) the proof of Theorem 1 in \citet{HarrisonVanMieghem1997} (see pages 753-754) with the only substantive difference being (aside from the obvious notational differences) the process $X$ on their Equation (36) on page 755 is replaced with \[B(t) -\eta q\int_0^t e' Z(s) ds- C {\rm diag}(x^*)\int_0^t \kappa(s) ds \] in our setting. Then following the same steps in their proof shows that the analogy of the process $Y$ (in our setting) defined as in their Equation (35) and $\zeta$ is admissible for our BCP (\ref{eq:3.16})-(\ref{eq:3.21}). Moreover, because $(Y, \zeta)$ results in the same queue length process $Z$. Its cost is the same as that of the policy $(Z, U, \zeta)$ for RBCP (\ref{eq:4.9})-(\ref{eq:4.14}).   
		\end{proof}
		
		\begin{proof}[Proof of Proposition \ref{prop:2}]
			Given an admissible policy $\theta$ for EWF and the corresponding process $W, L$, we set $Z_{i^*}\equiv W$ and $Z_i\equiv 0$ for $i\not= i^*$ and $U_{k^*} \equiv L$ and $U_k\equiv 0$ for $k\not= k^*$. 
            Moveover, we set $\zeta_i(s)=\theta(s)/(\alpha_i \hat{\alpha}_i)$ for $i=1,\ldots, I$, which results in $\sum_{i=1}^I \alpha_i \zeta_i^2 (s) = c(\theta(s))$ by Lemma \ref{lem:5}. Then it follows from (\ref{eq:LowestHoldingCostBuffIndex})-(\ref{eq:76}) that $(Z, U, \zeta)$ has the same cost in RBCP as $\theta$ does in EWF. 

            To prove the converse, let $Z, U, \zeta$ be an admissible policy for RBCP, and let \begin{align*}
                \theta(s)=e'\zeta(s),\; W(s) = e' Z(s), \text{ and } \; L(s)=(\lambda^*)' U(s), \quad s\geq 0.
            \end{align*}
            It is easy to verify that $\theta(\cdot)$ is admissible for EWF. Moreover, $c(\theta(s))\leq \sum_{i=1}^I \alpha_i \zeta_i^2 (s)$ by Lemma \ref{lem:5} and that $h W(s) \leq \sum_{i=1}^I (h_i-h_0) Z_i(s)$ and $r L(s)\leq c' U(s)$ for $s\geq 0$. Thus, the cost of $\theta$ for the EWF is less than or equal to that of the policy $(Z, U, \zeta)$ for the RBCP. 
		\end{proof}

	\begin{proof}[Proof of Proposition \ref{prop:3}]
		Consider the auxiliary stationary reflected diffusion on $[0,\infty)$, denoted by $\left\{\tilde{W}(t),\,t\ge 0\right\}$, associated with the drift rate function $-\left(\eta y -a + \theta^*(y)\right)$ and variance parameter $\sigma^2$. As noted on pages 470--471 of \citet{BrowneWhitt1995} -- also see \citet{Mandl1968} and \citet{KarlinTaylor1981} -- its probability density function, denoted by $\varphi$, is given as follows:
		\begin{align}
		\varphi\left(x\right) = \frac{\displaystyle\exp\left\{-\int_{0}^{x}\frac{2}{\sigma^2}\left(\eta y -a + \theta^*(y)\right)\,dy\right\}}{\displaystyle\int_{0}^{\infty}\exp\left\{-\int_{0}^{y}\frac{2}{\sigma^2}\left(\eta s -a + \theta^*(s)\right)\,ds\right\}\,dy},\quad x\in [0,\infty)\label{eq:153}
		\end{align}
		provided all integrals are finite, which we verify next. To this end, let $k=\inf\left\{y\ge 0: v(y)\ge 0\right\}$ where $(v,\beta^*)$ solve the Bellman equation (\ref{eq:6.5*})--(\ref{eq:6.6*}), and note from Equation (\ref{eq:6.6*}) that $-r\le v(y)\le 0$ for $y\le k$ and $0\le v(y) \le h/\eta$ for $y\ge k$. In order to verify the integrals above are finite, using Equation (\ref{eq:6.7}) note that
		\begin{align}
		\exp\left\{-\int_{0}^{y}\frac{2}{\sigma^2}\left(\eta s -a + \theta^*(s)\right)\,ds\right\} & = \exp\left\{-\int_{0}^{y}\frac{2}{\sigma^2}\left(\eta s -a + \frac{\hat{\alpha}}{2}v(s)\right)\,ds\right\}\notag\\
		&=\exp\left\{-\frac{\eta y^2-ay}{\sigma^2}\right\}\exp\left\{-\frac{\hat{\alpha}}{\sigma^2}\left[\int_{0}^{k}v(s)\,ds + \int_{k}^{y}v(s)\,dy\right]\right\}\notag\\
		&\le \exp\left\{-\frac{\eta y^2-ay}{\sigma^2}\right\}\exp\left\{\frac{\hat{\alpha}}{\sigma^2}rk\right\},\label{eq:154}
		\end{align}
		from which we also deduce that the integral in the denominator of the right hand side of Equation (\ref{eq:153}) is finite. Moreover, it follows from Equation (\ref{eq:154}) that the stationary diffusion $\tilde{W}$ has finite moments. In particular, 
		\begin{align}
		E\left[\tilde{W}(0)\right] = E\left[\tilde{W}(t)\right]<\infty,\quad t<\infty.\label{eq:155*}
		\end{align}
		Next, we define another auxiliary stationary diffusion, denoted by $\tilde{W}^*$, as follows: 
		\begin{align*}
		\tilde{W}^*(t) = W^*(0)+\tilde{W}(t).
		\end{align*}
		Noting $W^*(0)<\tilde{W}^*(0)$ almost surely, we define the stopping time $\tau$ as follows:
		\begin{align*}
		\tau = \inf\left\{t\ge 0: W^*(t)\ge \tilde{W}^*(t)\right\}
		\end{align*}
		and introduce the following process:
		\begin{align*}
		\hat{W}^*(t) = \Bigg\{
		\begin{array}{ll}
		W^*(t),&t<\tau,\\
		\tilde{W}^*(t),&t>\tau.
		\end{array}
		\end{align*}
		By the strong Markov property of diffusions, $\hat{W}^*$ has the same distribution as $W^*$. Moreover, 
		\begin{align*}
		\hat{W}^*(t)\le \tilde{W}^*(t),\quad t\ge 0.
		\end{align*}
		Therefore, we conclude that
		\begin{align}
		E\left[W^*(t)\right]&=E\left[\hat{W}^*(t)\right]\notag\\&\le E\left[\tilde{W}^*(t)\right]\notag\\&= W^*(0)+E\left[\tilde{W}(t)\right]\notag\\&=W^*(0)+E\left[\tilde{W}(0)\right]\notag\\&<\infty,\label{eq:156*}
		\end{align}
		where the second equality follows from the definition of $\tilde{W}^*$, the third equality from the stationarity of $\tilde{W}$, and the last equality from Equation (\ref{eq:155*}). Thus, we conclude from $W^*(t)\ge 0$ for $t\ge 0$ and Equation (\ref{eq:156*}) that
		\begin{align*}
		\lim_{t\rightarrow \infty}\frac{E\left[W^*(t)\right]}{t} \le \lim_{t\rightarrow \infty}\frac{W^*(0) + E\left[\tilde{W}(0)\right]}{t}=0,
		\end{align*}
		as desired.
	\end{proof}
	
	The next lemma aids in the proof of Lemma \ref{lem:vUnique}. To state the result, it will be convenient to rewrite Equations (\ref{eq:1})--(\ref{eq:2}) as follows:
	\begin{alignat}{2} 
	&v^{\prime}(x) &&= q_2v^2(x) + q_1(x)v(x) + q_0(x),\quad x\ge 0,\label{eq:1*}\\
	&v(0)&&=-r,\label{eq:2*}
	\end{alignat}
	where $q_0(x)=\frac{2}{\sigma^2}\left(\beta - hx\right)$,  $q_1(x)=\frac{2}{\sigma^2}(\eta x-a)$, and $ q_2=\frac{\hat{\sigma}}{2\sigma^2}>0$ for $x\ge 0$.
	
	\begin{lem}\label{lem:7}
		For each $v\in C^1[0,\infty)$ satisfying Equations (\ref{eq:1*})--(\ref{eq:2*}), $y(x) = \exp\left\{-q_2\int_{0}^{x}v(t)\,dt\right\}$ satisfies 
		\begin{align}
		&y^{\prime\prime}(x) -q_1(x)y^{\prime}(x) +q_2q_0(x)y(x)=0,\quad x\ge 0,\label{eq:3*}\\
		&y(0)=1,\quad y^{\prime}(0) = rq_2. \label{eq:4*}
		\end{align}
		Conversely, for each $y\in C^2[0,\infty)$ satisfying Equations (\ref{eq:3*})--(\ref{eq:4*}), $v=-y^{\prime}/(q_2y)$ satisfies Equations (\ref{eq:1*})--(\ref{eq:2*}).
	\end{lem}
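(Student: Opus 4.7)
The plan is to establish both directions by direct differentiation, since Lemma \ref{lem:7} is the classical linearizing substitution that converts a scalar Riccati equation into a second-order linear ODE.

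For the forward direction, I would start by verifying the initial conditions. By definition, $y(0) = \exp\{0\} = 1$. Differentiating $y$ via the chain rule yields
\[
y^{\prime}(x) = -q_2 v(x)\, y(x),
\]
so that $y^{\prime}(0) = -q_2 v(0) = r q_2$, using $v(0)=-r$. Next, I would differentiate once more to obtain $y^{\prime\prime}(x) = -q_2 v^{\prime}(x) y(x) - q_2 v(x) y^{\prime}(x)$. Substituting the Riccati equation (\ref{eq:1*}) for $v^{\prime}(x)$ and replacing $y^{\prime}$ by $-q_2 v y$ gives
\[
y^{\prime\prime}(x) = -q_2\bigl[q_2 v^2 + q_1 v + q_0\bigr] y + q_2^2 v^2 y = -q_2 q_1 v y - q_2 q_0 y.
\]
Finally, using $-q_1 y^{\prime} = q_1 q_2 v y$, one checks that $y^{\prime\prime} - q_1 y^{\prime} + q_2 q_0 y = 0$, which is (\ref{eq:3*}).

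For the converse, I would define $v(x) = -y^{\prime}(x)/(q_2 y(x))$ wherever $y$ does not vanish. The initial conditions give $v(0) = -y^{\prime}(0)/(q_2 y(0)) = -rq_2/q_2 = -r$, matching (\ref{eq:2*}). Applying the quotient rule,
\[
v^{\prime}(x) = -\frac{y^{\prime\prime}(x)}{q_2 y(x)} + \frac{\bigl(y^{\prime}(x)\bigr)^2}{q_2\, y(x)^2} = -\frac{y^{\prime\prime}(x)}{q_2 y(x)} + q_2 v^2(x).
\]
Substituting the linear ODE $y^{\prime\prime} = q_1 y^{\prime} - q_2 q_0 y$ and recognizing $-y^{\prime}/(q_2 y) = v$ yields $v^{\prime} = q_1 v + q_0 + q_2 v^2$, which is (\ref{eq:1*}).

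There is no serious obstacle in either direction; both are routine computations. The only subtlety worth a remark is well-definedness in the converse: to obtain $v \in C^1[0,\infty)$ one needs $y(x) \neq 0$ on $[0,\infty)$. In the forward direction this is automatic since $y$ is an exponential and hence strictly positive. In the converse direction, this nonvanishing is a standing hypothesis on the particular $y$ one chooses (and will be guaranteed in the application of this lemma because the relevant $y$ constructed from Kummer/parabolic cylinder functions remains strictly positive on $[0,\infty)$).
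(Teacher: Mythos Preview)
Your proof is correct and follows essentially the same approach as the paper: both directions are handled by direct differentiation and substitution, with the forward direction computing $y'=-q_2vy$ and $y''$ then invoking the Riccati equation, and the converse applying the quotient rule to $v=-y'/(q_2y)$ and invoking the linear ODE. Your brief remark on the nonvanishing of $y$ in the converse is a reasonable aside that the paper leaves implicit.
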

	\begin{proof}[Proof of Lemma \ref{lem:7}]
		Let $v\in C^1[0,\infty)$ satisfy Equations (\ref{eq:1*})--(\ref{eq:2*}) and let $y(x) = \exp\left\{-q_2\int_{0}^{x}v(t)\,dt\right\}$. Then it follows that
		\begin{align*}
		y^{\prime\prime}(x) &-q_1(x)y^{\prime}(x) +q_2q_0(x)y(x) \\&=\left[\exp\left\{-q_2\int_{0}^{x}v(t)\,dt\right\}\cdot \left(-q_2v(x)\right)\right]^{\prime}-q_1(x) \left[\exp\left\{-q_2\int_{0}^{x}v(t)\,dt\right\}\cdot \left(-q_2v(x)\right)\right]\\&\qquad\qquad+q_2 q_0(x) \exp\left\{-q_2\int_{0}^{x}v(t)\,dt\right\}\\
		&=\left[\exp\left\{-q_2\int_{0}^{x}v(t)\,dt\right\}\cdot \left(-q_2v(x)\right)^2+\exp\left\{-q_2\int_{0}^{x}v(t)\,dt\right\}\cdot \left(-q_2v^{\prime}(x)\right)\right]\\
		&\qquad\qquad +q_2q_1(x) v(x)\exp\left\{-q_2\int_{0}^{x}v(t)\,dt\right\}+q_2 q_0(x) \exp\left\{-q_2\int_{0}^{x}v(t)\,dt\right\}\\
		&= q_2\exp\left\{-q_2\int_{0}^{x}v(t)\,dt\right\}\left[q_2v^2(x) -v^{\prime}(x) +q_1(x) v(x) +q_0(x)\right]\\&=0.
		\end{align*}
		Moreover, $y(0) = \exp\left\{-q_2\cdot 0\right\} = 1$ and $y^{\prime}(0) = -q_2\exp\left\{-q_2\cdot 0\right\}v(0) = rq_2$. 
		
		On the other hand, let $y\in C^2[0,\infty)$ satisfy Equations (\ref{eq:3*})--(\ref{eq:4*}) and let $v=-y^{\prime}/(q_2y)$. Then it follows that
		\begin{align*}
		v^{\prime}(x) &= \left[-\frac{y^{\prime}(x)}{q_2y(x)}\right]^{\prime} = -\frac{1}{q_2}\left[\frac{y^{\prime\prime}(x)}{y(x)} - \left(\frac{y^{\prime}(x)}{y(x)}\right)^2\right] = -\frac{y^{\prime\prime}(x)}{q_2y(x)} + q_2v^2(x)\\
		&=-\frac{q_1(x)y^{\prime}(x)}{q_2y(x)}+q_0(x) + q_2v^2(x) = q_2v^2(x) +q_1(x) v(x) +q_0(x).
		\end{align*}
		Moreover, $v(0) = -y^{\prime}(0)/\left(q_2y(0)\right) = -\left(rq_2\right)/\left(q_2\cdot 1\right)=-r$. This completes the proof.
	\end{proof}
	
	\begin{proof}[Proof of Lemma \ref{lem:vUnique}]
		It is known that Equations (\ref{eq:3*})--(\ref{eq:4*}) can be transformed into a degenerate hypergeometric equation known as a Kummer's equation; see \citet{PolyaninZaitsev2003}. Such equations are known to have confluent hypergeometric function solutions; see \citet{Bateman1953} and \citet{Abramowitz2003}. It then follows from Lemma \ref{lem:7} that Equations (\ref{eq:1*})--(\ref{eq:2*}) have a solution $v$. To complete the proof, we must show that the solution $v$ to Equations (\ref{eq:1*})--(\ref{eq:2*}) is unique. To this end, define the function $f$ by 
		\begin{align*}
		f(x, u) = q_2v^2 + q_1(x) v + q_0(x),\quad (x,v)\in [0,\infty) \times (-\infty,\infty).
		\end{align*}
		To prove uniqueness, it is enough to show that $f$ is locally Lipschitz in $v$, i.e., that $f$ is Lipschitz in $v$ when restricted to the compact domain $[0,N]\times [-M, M]$ where $N,M>0$. More specifically, local Lipschitzness will demonstrate uniqueness on each compact interval, which can then be easily extended to the positive real line. To this end, for $x\in [0,N]$ and $v_1,v_2\in [-M,M]$ we have that
		\begin{align*}
		\vert f(x, v_1) - f(x, v_2)\vert &= \left\vert q_2v_1^2 +q_1(x) v_1 - q_2v_2^2-q_1(x) v_2\right\vert \\
		&\le q_2 \left\vert v_1^2 - v_2^2\right\vert + |q_1(x)| \left\vert v_1-v_2\right\vert\\
		&= \left[q_2\left\vert v_1 + v_2\right\vert + \frac{2}{\sigma^2}\left(\eta x+|a|\right)\right]\cdot \left\vert v_1- v_2\right\vert\\&\le \left[2Mq_2 + \frac{2}{\sigma^2}\left(\eta N+|a|\right)\right]\left\vert v_1-v_2\right\vert.
		\end{align*} 
		Thus, $f$ is locally Lipschitz in $v$. This completes the proof.
	\end{proof}
\end{document}